\renewcommand{\PrintDOI}[1]{%
  \href{http://dx.doi.org/#1}{{\tt DOI:#1}}%
}
\renewcommand{\eprint}[1]{#1}
\theoremstyle{plain}
\newtheorem{theo}{Theorem}[section]%
\newtheorem{prop}[theo]{Proposition}%
\newtheorem{coro}[theo]{Corollary}%
\newtheorem{lemm}[theo]{Lemma}%
\newtheorem{theoremA}{Theorem}
\theoremstyle{definition}%
\newtheorem{defi}[theo]{Definition}%
\theoremstyle{remark}%
\newtheorem{rema}[theo]{Remark}%
\DeclareRobustCommand{\SkipTocEntry}[5]{}
\mathchardef\mhyph="2D 				
\newcommand{\numberset}{\mathbb} 
\newcommand{\N}{\numberset{N}} 
\newcommand{\Z}{\numberset{Z}} 
\newcommand{\R}{\numberset{R}}
\newcommand{\bC}{\numberset{C}}
\newcommand{\bH}{\mathbb{H}}
\newcommand{\bL}{\mathbb{L}}
\newcommand{\cC}{\mathcal{C}}
\newcommand{\cE}{\mathcal{E}}
\newcommand{\cI}{\mathcal{I}}
\newcommand{\cK}{\mathcal{K}}
\newcommand{\cL}{\mathcal{L}}
\newcommand{\cM}{\mathcal{M}}
\newcommand{\cN}{\mathcal{N}}
\newcommand{\cP}{\mathcal{P}}
\newcommand{\cS}{\mathcal{S}}
\newcommand{\cT}{\mathcal{T}}
\newcommand{\I}{\mathcal{I}}
\newcommand{\tA}{\tilde A}
\newcommand{\tB}{\tilde B}
\newcommand{\PI}{\langle \cP_\I\rangle}
\newcommand{\Ab}{\mathrm{Ab}}
\newcommand{\alg}{\mathrm{alg}}
\newcommand{\id}{\mathrm{id}}
\newcommand{\Cliff}{\mathrm{Cl}_\bC}
\newcommand{\Cone}{\mathrm{Con}}
\newcommand{\rE}{\mathrm{E}}
\newcommand{\GCalg}{\mathrm{C}^*_G}
\DeclareMathOperator{\Id}{id}
\DeclareMathOperator{\RKK}{\mathcal{R}KK}
\DeclareMathOperator{\RE}{\mathcal{R}E}
\DeclareMathOperator{\KKK}{KK}
\renewcommand{\setminus}{\smallsetminus}
\let\E\relax
\let\Res\relax
\newcommand{\E}{\underline{E}}
\DeclareMathOperator{\Ind}{Ind}
\DeclareMathOperator{\Res}{Res}
\newcommand{\tens}[2]{%
  \mathbin{\tensor*[^#1]{\otimes}{_{#2}}}%
}
\author{Valerio Proietti}
\address{Research Center for Operator Algebras, Department of Mathematics, and Shanghai Key Laboratory of Pure Mathematics and Mathematical Practice, East China Normal University, Shanghai 200241, China}
\email{proiettivalerio@math.ecnu.edu.cn}
\author{Makoto Yamashita}
\address{Department of Mathematics, University of Oslo, P.O. box 1053, Blindern, 0316 Oslo, Norway}
\email{makotoy@math.uio.no}
\date{May 26, 2021 (additional comments); April 23, 2021 (major reorganization); June 14, 2020 (first version)}
\title[Homology and $K$-theory of dynamical systems, I]{Homology and $K$-theory of dynamical systems\\ I. torsion-free ample groupoids}
\begin{document}

\begin{abstract}
Given an ample groupoid, we construct a spectral sequence with groupoid homology with integer coefficients on the second sheet, converging to the $K$-groups of the (reduced) groupoid C$^*$-algebra, provided the groupoid has torsion-free stabilizers and satisfies a strong form of the Baum--Connes conjecture. The construction is based on the triangulated category approach to the Baum--Connes conjecture developed by Meyer and Nest. We also present a few applications to topological dynamics and discuss the HK conjecture of Matui.
\end{abstract}

\subjclass[2010]{46L85; 19K35, 37D99}
\keywords{groupoid, C$^*$-algebra, $K$-theory, homology, Baum--Connes conjecture, spectral sequence.}

\maketitle
\setcounter{tocdepth}{1}
\tableofcontents


\section*{Introduction}

In this paper, we look at the $K$-theory of ample Hausdorff groupoids, that is, étale groupoids on totally disconnected spaces, and its relation to groupoid homology.
Such groupoids are closely related to dynamical systems on Cantor sets, such as (sub)shifts of finite type (also called topological Markov shifts) in symbolic dynamics.
While this remains a fundamental example, the second half of the last century saw a rapid development of the theory which resulted in several generalizations involving various geometric, combinatorial, and functional analytic structures.

One important class of Cantor systems comes from minimal homeomorphisms of the Cantor set.
This study was initiated by Giordano, Putnam and Skau \cite{pgs:orbit}, in which they classified minimal homeomorphisms up to orbit equivalence.
Actions of $\Z^k$ on the Cantor set, which are higher rank analogues, also naturally appear from tiling spaces.
More generally, essentially free ample groupoids appear in the study of actions of $\N^k$ by local homeomorphisms on zero-dimensional spaces, where they are known as Deaconu--Renault groupoids \citelist{\cite{deaconu:endo}\cite{exren:semi}}.
This is a convenient framework to understand higher-rank graph C$^*$-algebras.
The étale groupoids, and related invariants such as topological full groups, of such systems proved to be a rich source of interesting examples in the structure theory of discrete groups and operator algebras, see for example \citelist{\cite{monjus:piece}\cite{matui:remcan}\cite{phil:cantorzd}}.

Beyond the theory of dynamical systems, these groupoids also play an important role in the theory of operator algebras, where they provide an invaluable source of examples of C$^*$-algebras.
These are obtained by considering the (reduced) groupoid C$^*$-algebras \cite{ren:group}, generalizing the crossed product algebras for group actions on the Cantor set.
The resulting C$^*$-algebras capture interesting aspects of the homoclinic and heteroclinic structure of expansive dynamics \citelist{\cite{mats:ruellemarkov}\cite{put:algSmale}\cite{thomsen:smale}}, extending the correspondence between topological Markov shifts and the Cuntz--Krieger algebras.

The $K$-groups of groupoid C$^*$-algebras and groupoid cohomology with integer coefficients are known to have close parallels, for example in various cohomological invariants of tiling spaces.
In fact, groupoid homology \cite{cramo:hom} has even closer properties to $K$-groups, and the comparison of these invariants (for topologically free, minimal, and ample Hausdorff groupoids) was recently popularized by Matui \cite{matui:hk}.
While his conjectural isomorphism in its original form (``HK conjecture'') has counterexamples \cite{eduardo:hk}, in situations where one expects low homological dimension we do have an isomorphism, see for example \citelist{\cite{simsfarsi:hk}\cite{ortega:kep}}.

\medskip
Our main result gives a correspondence between groupoid homology and $K$-groups for reduced crossed products by torsion-free ample groupoids satisfying the strong Baum--Connes conjecture \cite{tu:moy}, as follows.

\begin{theoremA}[Theorem \ref{theo:hk-with-coeff}]\label{thm:a}
Let $G$ be an ample groupoid with torsion-free stabilizers satisfying the strong Baum--Connes conjecture, and $A$ be a separable $G$-C$^*$-algebra.
Then there is a convergent spectral sequence
\begin{equation*}
E^2_{pq}  = H_p(G,K_q(A)) \Rightarrow K_{p+q}(G \ltimes A).
\end{equation*}
\end{theoremA}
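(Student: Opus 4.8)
The plan is to carry out the Meyer--Nest homological algebra for triangulated categories inside the equivariant Kasparov category $\mathrm{KK}^G$ (with its triangulated structure as set up earlier), using the homological ideal attached to restriction to the unit space $X = G^{(0)}$, and then to recognize the resulting derived functors as Crainic--Moerdijk groupoid homology. Concretely, write $\Res\colon \mathrm{KK}^G \to \RKK^X$ for restriction to the unit space and $\Ind = \Ind_X^G$ for its left adjoint, the induction functor (available because $X$ is open in $G$). Let $\mathfrak{I}$ be the homological ideal of morphisms killed by $\Res$; one checks that it is compatible with countable coproducts and that $\Ind$ provides enough $\mathfrak{I}$-projectives, the $\mathfrak{I}$-projective objects being exactly the retracts of objects $\Ind(B)$, $B \in \RKK^X$.

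The decisive structural input is that, since the stabilizers of $G$ are torsion-free, the localizing subcategory generated by the $\mathfrak{I}$-projectives coincides with the compactly induced half $\CI$ of the Meyer--Nest complementary pair $(\CI, \cN)$ governing the Baum--Connes machinery for $G$; equivalently, the $\mathfrak{I}$-cellular approximation of any $A$ agrees with its Dirac-dual approximation $\tA \to A$. The intuition is that for torsion-free $G$ the classifying space $\E G$ is a free proper $G$-space, so a slice decomposition exhibits proper $G$-algebras, inside $\mathrm{KK}^G$, as built out of algebras induced from $X$; this is the place where torsion-freeness is genuinely used.

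Next I would take $F = K_*(G \ltimes -)\colon \mathrm{KK}^G \to \Ab^{\Z/2}$, a stable homological functor since the reduced crossed product preserves semisplit extensions. Fixing the canonical bar $\mathfrak{I}$-projective resolution $P_\bullet \to A$ associated with the comonad $L = \Ind \circ \Res$, the Meyer--Nest spectral sequence of an $\mathfrak{I}$-projective resolution reads
\[ E^2_{pq} = (\bL_p F)_q(A) \;\Longrightarrow\; F_{p+q}(\tA), \]
and converges since $F$ commutes with the homotopy colimits occurring in the resolution. Here the strong Baum--Connes conjecture is invoked precisely once: it states that $F$ vanishes on $\cN$, so that the Dirac-dual morphism induces an isomorphism $F_*(\tA) \xrightarrow{\ \cong\ } F_*(A) = K_*(G \ltimes A)$, whence the spectral sequence converges to $K_{p+q}(G \ltimes A)$.

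It then remains to identify the $E^2$-page with groupoid homology. By construction $\bL_\bullet F(A)$ is the homology of the complex $m \mapsto F(L^{m+1}A) = K_*\bigl(G \ltimes \Ind(\Res L^m A)\bigr)$ with its simplicial differentials; the imprimitivity (Morita) equivalence $G \ltimes \Ind(C) \sim C$, valid for any $C \in \RKK^X$, rewrites this as $m \mapsto K_*(\Res L^m A)$, and, $G$ being ample, $\Res L^m A \cong C_0(G^{(m)}) \otimes_{C_0(X)} \Res A$ with $C_0(G^{(m)})$ locally free over $C_0(X)$, so that $K_*(\Res L^m A) \cong C_c(G^{(m)}, \Z) \otimes_{C_c(X,\Z)} K_*(A)$. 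Under these identifications the complex becomes exactly the Crainic--Moerdijk/Matui complex computing $H_\bullet(G, K_*(A))$, and therefore $E^2_{pq} = H_p(G, K_q(A))$. The step I expect to require real work is the first structural claim --- that for torsion-free stabilizers the induced-from-$X$ subcategory is all of the Baum--Connes left cell $\CI$, which is what forces the abstract spectral sequence to converge to the genuine $K$-theory of the crossed product rather than to its ``topological'' approximation --- together with the two ample-groupoid computations underpinning the last step, namely the imprimitivity equivalence $G \ltimes \Ind(C) \sim C$ and the Künneth-type isomorphism $K_*(C_0(G^{(m)}) \otimes_{C_0(X)} B) \cong C_c(G^{(m)},\Z) \otimes_{C_c(X,\Z)} K_*(B)$; granting these, the remainder is formal.
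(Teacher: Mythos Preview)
Your overall strategy---Meyer--Nest homological algebra for the ideal $\mathfrak{I}=\ker\Res^G_X$, the bar resolution from the comonad $L=\Ind^G_X\Res^G_X$, the resulting ABC spectral sequence, and the identification of the $E^2$-page via the Morita equivalence $G\ltimes\Ind^G_X C\sim C$ together with the K\"unneth-type isomorphism for totally disconnected spaces---is exactly the paper's. The one substantive divergence is in how you deploy the two hypotheses. You separate them: torsion-freeness alone is to force the localizing subcategory generated by the $\mathfrak{I}$-projectives to coincide with a pre-existing Baum--Connes left cell $\CI$, and ``strong Baum--Connes'' is then read as the weaker statement that $K_*(G\ltimes{-})$ vanishes on the complement $\cN$. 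The paper does not set up or invoke such an ambient pair $(\CI,\cN)$. Its ``strong Baum--Connes'' is Tu's categorical statement that $C_0(X)$ is $\KKK^G$-equivalent to a \emph{nuclear} proper $G$-algebra $P$, and it uses this \emph{together} with torsion-freeness---a slice/\v{C}ech argument, with the nuclearity of $P$ required for an excision step routed through representable $E$-theory---to prove directly that $\cN_{\ker\Res^G_X}=0$, so that the $\mathfrak{I}$-cellular approximation $\tA\to A$ is already a $\KKK^G$-equivalence, not merely an isomorphism after applying $F$. Your decoupling is conceptually appealing but would require either an independent construction of $(\CI,\cN)$ for \'etale groupoids or the slice argument for \emph{arbitrary} proper $G$-algebras, where the nuclearity/excision issue resurfaces; the paper sidesteps this by treating only the single proper algebra supplied by Tu.
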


In particular, for $A = C_0(X)$ we obtain a spectral sequence with $E^2_{pq} = E^3_{pq} = H_p(G, K_q(\bC))$ converging to $K_{p+q}(C^*_r G)$.
Similarly to discrete groups, amenable groupoids satisfy the (strong) Baum--Connes conjecture, which cover most of our concrete examples in this paper.

Note that, for groupoids with low homological dimension, this spectral sequence degenerates for degree reasons.
Moreover the top-degree group in groupoid homology tends to be torsion-free, so that there are no extension problems, thus leading to the positive cases where the HK conjecture holds.

Our proofs of Theorem \ref{thm:a} is based on the triangulated category approach to the Baum--Connes conjecture by Meyer and Nest \citelist{\cite{meyer:tri}\cite{nestmeyer:loc}\cite{meyernest:tri}}.
Building on their theory of projective resolutions and complementary subcategories from homological ideals, we show that an explicit projective resolution can be obtained from adjoint functors and associated simplicial objects.
Applying this to the restriction functor $\KKK^G \to \KKK^X$ and induction functor $\KKK^X \to \KKK^G$ for $X = G^{(0)}$ gives the standard bar complex computing the groupoid homology.
Then, the spectral sequence in Theorem~\ref{thm:a} appears as a particular case of the ``ABC spectral sequence'' of \cite{meyer:tri}.

\medskip
This paper is organized as follows.
In Section \ref{sec:prelim} we lay out the basic notation and definitions for all the background objects of the paper.

In Section \ref{sec:approx-equivar-KK}, we look at a simplicial object arising from adjoint functors and relate it to the categorical approach to the Baum--Connes conjecture.
In a triangulated category, a homological ideal with enough projectives and a pair of complementary subcategories, appear from an adjunction of functors \cite{meyer:tri}.
Our observation is that the canonical comonad construction from homological algebra gives a concrete model of projective resolution.
We then use this to show that, when $G$ is an étale groupoid satisfying the strong Baum--Connes conjecture, any $G$-C$^*$-algebra $A$ belongs to the triangulated (localizing) subcategory of $\KKK^G$ generated by the image of the induction functor $\KKK^X \to \KKK^G$ for $X = G^{(0)}$.

We then combine these results in Section \ref{sec:homology-k-theory} to obtain our main results mentioned above. 
Now, let us summarize the ingredients which go into the correspondence between groupoid homology and $K$-theory.
By the adjunction of the functors $\Ind^G_X \colon \KKK^X \to \KKK^G$ and $\Res^G_X\colon \KKK^G \to \KKK^X$, for any $G$-C$^*$-algebra $A$ we have an exact triangle in $\KKK^G$,
\[
P \to A \to N \to \Sigma P,
\]
with $\Res^G_X N \simeq 0$ and $P$ being orthogonal to all such $N$.
From results of Section \ref{sec:approx-equivar-KK}, for any homological functor $F$, we have a spectral sequence from the Moore complex of the simplicial object $(F(L^{n+1} A))_{n=0}^\infty$ with $L = \Ind^G_X \Res^G_X$, converging to $F(P)$.

In addition, if $G$ has torsion-free stabilizers and satisfies the strong Baum--Connes conjecture, we actually have $P \simeq A$ in $\KKK^G$, hence obtaining a homological computation of $F(A)$.
For an ample groupoid $G$, with the functor $F = K_\bullet(G \ltimes \mhyph)$, this complex is isomorphic to the bar complex computing the groupoid homology of $G$ with coefficient in $K_\bullet(A)$.

Finally, in Section \ref{sec:examples} we discuss some examples.
We also compare our construction with the counterexample to the HK conjecture from \cite{eduardo:hk}.

\addtocontents{toc}{\SkipTocEntry}\subsection*{Acknowledgements}

We are indebted to R.~Nest for proposing the topic of this paper as a research project, and for numerous stimulating conversations.
We are also grateful to R.~Meyer for valuable advice concerning equivariant $K$-theory and for his careful reading of our draft. Thanks also to M.~Dadarlat, R.~Deeley, M.~Goffeng, and I.~F.~Putnam for stimulating conversations and encouragement at various stages, which led to numerous improvements.

This research was partly supported through V.P.'s ``Oberwolfach Leibniz Fellowship'' by the \emph{Mathematisches Forschungsinstitut Oberwolfach} in 2020. In addition, V.P. was supported by the Science and Technology Commission of Shanghai Municipality (grant No.~18dz2271000). M.Y. acknowledges support by Grant for Basic Science Research Projects from The Sumitomo Foundation at an early stage of collaboration.

\section{Preliminaries}\label{sec:prelim}

In this section we recall the most important objects and notions at the basis of this paper. We will deal with C$^*$-algebras endowed with a groupoid action, and will consider these as objects of the equivariant Kasparov category.

\subsection{Groupoids and Morita equivalence}

Let $G$ be a groupoid with base space $X = G^{(0)}$.
We let $s,r\colon G\to X$ denote respectively the source and range maps. 
In addition, we let $G_x=s^{-1}(x)$, $G^x=r^{-1}(x)$, and for a subset $A \subset X$, we write $G_A = \bigcup_{x \in A} G_x$, $G^A = \bigcup_{x \in A} G^x$, and $G|_A = G^A \cap G_A$.

\begin{defi}
A topological groupoid $G$ is \emph{étale} if $s$ and $r$ are local homeomorphisms, and \emph{ample} if it is étale and $G^{(0)}$ is totally disconnected.
\end{defi}

If $G$ is étale and $g \in G$, then by definition, for small enough neighborhoods $U$ of $s(g)$ there is a neighborhood $U'$ of $g$ such that $s(U') = U$, and the restriction of $s$ and $r$ to $U'$ are homeomorphisms onto the images.
When this is the case, we write $g(U) = r(U')$ and use $g$ as a label for the map $U \to g(U)$ induced by the identification of $U \sim U' \sim g(U)$.

Throughout the paper we assume that a topological groupoid is second countable, locally compact Hausdorff, and admits a continuous Haar system $\lambda = (\lambda^x)_{x \in X}$, an invariant continuous distribution of Radon measures on the spaces $(G^x)_{x \in X}$.
In particular, $G$ and $X$ are $\sigma$-compact and paracompact.
Under this setting we have full and reduced groupoid C$^*$-algebras $C^*(G, \lambda)$, $C^*_r(G, \lambda)$ make sense (we mostly focus on the latter).
In general these might depend on the choice of $\lambda$, but different choices lead to strongly Morita equivalent C$^*$-algebras respectively \citelist{\cite{damu:renequiv}\cite{simswill:morita}}.
Recall that the condition on Haar system is automatic for étale groupoids, as we can take the counting measure on $G^x$.
In this case we suppress the notation $\lambda$, and simply write $C^*_r(G)$ instead of $C^*_r(G, \lambda)$.

A locally compact groupoid is \emph{amenable} if there is a net of probability measures on $G^x$ for $x \in G^{(0)}$ which is approximately invariant, see \cite{renroch:amgrp}.
In this case, the full and reduced C$^*$-norms are equal, and the completion of the compactly supported functions in the regular representation is $*$-isomorphic to the full groupoid C$^*$-algebra.

\subsection{Groupoid equivariant \texorpdfstring{C$^*$-algebras}{C*-algebras}}

Let us fix our conventions for $G$-C$^*$-algebras.

\begin{defi}
A \emph{$C_0(X)$-algebra} is a C$^*$-algebra $A$ endowed with a nondegenerate $*$-homomorphism from $C_0(X)$ to the center of the multiplier algebra $\cM(A)$.
\end{defi}

Thus, if $a \in A$, we have $a = f b = b f$ for some $f \in C_0(X)$ and $b \in A$, and the second equality holds for all $f$ and $b$.
For an open set $U \subset X$, we put $A_U = A C_0(U)$.
For a locally closed subset $Y \subset X$, that is, if $Y = U \setminus V$ for some open sets $U, V \subset X$, we put $A_Y = A_U / A_{U \cap V}$, and we put $A_x = A_{\{x\}} = A / A C_0(X \setminus \{x\})$ for $x \in X$.

A $C_0(X)$-algebra is \emph{$C_0(X)$-nuclear} if it is a continuous field of C$^*$-algebras over $X$ such that every fiber $A_x$ is nuclear. There is another way to define this in terms of completely positive maps factoring through $M_n(C_0(X))$, see \cite{bauval:nuc}.

\begin{defi}
Let $A$ and $B$ be $C_0(X)$-algebras which admit faithful $C_0(X)$-equivariant nondegenerate representations on Hilbert C$^*$-$C_0(X)$-modules $\cE$ and $\cE'$.
Then their C$^*$-algebraic relative tensor product $A \otimes_{C_0(X)} B$ is defined as the closure of the image of $A \otimes^{\alg}_{C_0(X)} B$ in the adjointable operators $\cL(\cE \otimes_{C_0(X)} \cE')$.
\end{defi}

Although we do not need it, the above definition can be extended to arbitrary $C_0(X)$-algebras \cite{kas:descent}*{Definition 1.6}.

\begin{rema}
\label{rem:models-of-C0X-tensor-prod}
If $A$ or $B$ is $C_0(X)$-nuclear, we have
\[
A \otimes_{C_0(X)} B \simeq (A \otimes_{\max} B)_{\Delta(X)} \simeq (A \otimes_{\min} B)_{\Delta(X)},
\]
where $\Delta(X) = \{(x, x) \mid x \in X \} \subset X \times X$, see \cite{bla:defhopf}*{Section 3.2}.
\end{rema}

If $f \colon Y \to X$ is a continuous map, $C_0(Y)$ is a $C_0(X)$-algebra. It is  a continuous field (hence $C_0(X)$-nuclear) if and only if $f$ is open \cite{blakirch:glimm}. The map $f$ induces a functor $f^* A = C_0(Y) \otimes_{C_0(X)} A$ from the category of $C_0(X)$-algebras to that of $C_0(Y)$-algebras.
For $Y = G$ and $f = s$, we write $s^* A = C_0(G) \tens{s}{C_0(X)} A$, and similarly for $f = r$.

\begin{defi}
Let $G$ be a second countable locally compact Hausdorff groupoid, and put $G^{(0)} = X$.
A \emph{continuous action} of $G$ on a $C_0(X)$-algebra $A$ is given by an isomorphism of $C_0(G)$-algebras
\[
\alpha\colon C_0(G) \tens{s}{C_0(X)} A \to C_0(G) \tens{r}{C_0(X)} A
\]
such that the induced homomorphisms $\alpha_g \colon A_{s(g)} \to A_{r(g)}$ for $g \in G$ satisfy $\alpha_{g h} = \alpha_g \alpha_h$.
In this case, we say that $A$ is a $G$-C$^*$-\emph{algebra}.
\end{defi}
 
For an étale groupoid $G$, the above amounts to giving $\alpha_g$ as isomorphisms $A_U \to A_{g(U)}$ for small enough neighborhoods $U$ of $s(g)$, compatible with the natural actions of $C_0(U) \simeq C_0(g(U))$ and multiplicative in $g$.

In \cite{gall:kk}, Le Gall constructed the equivariant $\KKK$-category of separable and trivially graded $G$-C$^*$-algebras with morphism sets $\KKK^G(A, B)$, generalizing Kasparov's construction for transformation groupoids.
This will be our main framework to work in.

\begin{rema}
Le Gall uses a different convention for $A \otimes_{C_0(X)} B$, namely $(A \otimes_{\max} B)_{\Delta(X)}$, which is different from ours in general.
However these definitions agree in all the relevant cases, such as $B = C_0(Y)$ for a locally compact space $Y$ endowed with an open map $Y \to X$, then $B$ would be $C_0(X)$-nuclear, see Remark \ref{rem:models-of-C0X-tensor-prod}.
For example, the range map $G\to X$ is open because there exists a Haar system \cite{ren:group}*{Proposition 2.4}.
\end{rema}

Suppose moreover that $G$ admits a Haar system $\lambda$.
The algebraic balanced tensor product $C_c(G) \tens{s}{C_0(X)} A$ admits an $A$-valued inner product induced by the measures on the sets $G_x$ from $\lambda$, and we denote its closure as a right Hilbert $A$-module by $E^G_A = L^2(G, A)$.
(This can be interpreted as $L^2(G) \otimes_{C_0(X)} A$, where the canonical right $C_0(X)$-Hilbert module $L^2(G) = L^2(G, C_0(X))$.)
The \emph{reduced crossed product} $G \ltimes_\alpha A = C^*_r(A, G, \alpha, \lambda)$ is the C$^*$-algebra generated by the ``convolution product'' representation of $C_c(G) \tens{s}{C_0(X)} A$ on $E^G_A$, see \citelist{\cite{skan:crossinv}\cite{damu:renequiv}} for the details.
In this paper we always take reduced crossed products, although they will be isomorphic to the full ones in most of our concrete examples as we mostly consider amenable groupoids.

\begin{rema}
Different choices of $\lambda$ give Morita equivalent reduced crossed products.
More generally, let $H$ be another topological groupoid, and let $(A,G,\alpha)$ and $(B,H,\beta)$ be equivalent actions of $G$ and $H$ in the sense of \cite{damu:renequiv}.
As part of the data there is a bibundle $Z$ over $G$ and $H$ as above, and we get a \emph{linking groupoid} $L = G \coprod Z \coprod Z^\mathrm{op} \coprod H$ with base $G^{(0)} \coprod H^{(0)}$.
On one hand, we have a continuous action $\gamma$ of $L$ on $A \oplus B$ induced by the equivalence data.
On the other, a choice of Haar systems $\lambda$ on $G$ and $\mu$ on $H$ gives a Haar system $\kappa$ on $L$ \cite{simswill:morita}.
Then the reduced crossed product $C^*_r(A \oplus B, L, \gamma, \kappa)$ is a linking algebra between $C^*_r(A, G,\alpha, \lambda)$ and $C^*_r(B, H,\beta, \mu)$.
\end{rema}

\subsection{Equivariant sheaves over ample groupoids}
\label{sec:equivar-sheaves-over-ample-grpd}

Let $G$ be an étale groupoid. The nerve $(G^{(n)})_{n=0}^\infty$ of $G$ form a simplicial space, with the face maps are given by
\[
d^n_i\colon G^{(n)} \to G^{(n-1)}, \quad
(g_{1},. . ., g_{n}) \mapsto \begin{cases}
(g_{2}, . . . , g_{n} ) & \text{if $i=0$} \\
(g_{1}, . . . , g_{i}g_{i+1}, . . . , g_{n} ) & \text{if $1 \leq i \leq n-1$} \\
(g_{1}, ... , g_{n-1} ) & \text{if $i=n$,}
\end{cases}
\]
with $d^1_1 = r$ and $d^1_0 = s$ as maps $G \to X$, while the degeneracy maps are given by insertion of units.
These structure maps are étale maps.

Suppose further that $G$ be an ample groupoid, and $C$ be a commutative group.
For a topological space $Y$, we denote the group of compactly supported continuous functions from $Y$ to $C$ by $C_c(Y, C)$.
The \emph{groupoid homology} of $G$ with coefficients in $C$, denoted $H_\bullet(G, C)$, is the homology of the chain complex $(C_c(G^{(n)},C))_{n = 0}^\infty$ with differential
\begin{align*}
\partial_n &=\sum_{i=0}^n(-1)^i (d^n_i)_* \colon C_c(G^{(n)}, C) \to C_c(G^{(n-1)}, C),&
(d^n_i)_*(f)(x) &= \sum_{d^n_i(y) = x} f(y).
\end{align*}
(This is well defined as $d^n_i$ is étale.)

This is a special case of groupoid homology with coefficients in equivariant sheaves \cite{cramo:hom}.
Let us quickly review this more general setting.
When $G$ is a topological groupoid with base space $X$, a \emph{$G$-equivariant sheaf} (of commutative groups) over $X$ is a sheaf (of commutative groups) $F$ over $X$, together with a morphism $s^* F \to r^* F$ of sheaves over $G$, with analogous multiplicativity conditions to the case of $G$-C$^*$-algebras.

In fact, when $G$ is ample, such $G$-sheaves are represented by \emph{unitary $C_c(G, \Z)$-modules} \cite{MR3270778}.
Here, we consider the convolution product on $C_c(G, \Z)$, and a module $M$ over $C_c(G, \Z)$ is said to be unitary if it has the factorization property $C_c(G, \Z) M = M$.
The correspondence is given by $\Gamma_c(U, F) = C_c(U, \Z) M$ for compact open sets $U \subset X$ if $F$ is the sheaf corresponding to such a module~$M$.

A sheaf $F$ on a topological space $Y$ is called \emph{soft} if, for any closed subspace $K \subset Y$ and $s \in \Gamma(K, F)$, there is a global section $s' \in \Gamma(Y, F)$ such that $s'|_K = s$.
When $Y$ is second countable locally compact and Hausdorff, this is equivalent to \emph{c-softness}, where in the above $K$ is moreover assumed to be compact.

\begin{prop}
\label{prop:sheaf-is-soft}
Let $Y$ be a totally disconnected, second countable, locally compact Hausdorff space.
Then any sheaf of commutative groups on $Y$ is soft.
\end{prop}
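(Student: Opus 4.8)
The plan is to reduce the extension problem to extending a section across a \emph{clopen} neighbourhood, where one can simply glue on the zero section. By the equivalence recalled just before the statement (valid precisely because $Y$ is second countable, locally compact and Hausdorff), it suffices to prove that every sheaf of commutative groups $F$ on $Y$ is c-soft, i.e.\ that any section over a \emph{compact} subset extends to a global section.

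So I would fix a compact $K \subseteq Y$ and a section $s \in \Gamma(K, F)$, and proceed in two steps. First, I would invoke the standard fact that for a compact subset of a (locally compact) Hausdorff space the canonical map $\varinjlim_{U} \Gamma(U, F) \to \Gamma(K, F|_K)$, the colimit running over open neighbourhoods $U$ of $K$, is an isomorphism; hence $s$ is the restriction of some $\tilde s \in \Gamma(U, F)$ with $U \supseteq K$ open. Second, I would use that $Y$, being totally disconnected, locally compact and Hausdorff, admits a basis of compact open sets: covering the compact set $K$ by compact open subsets of $U$ and extracting a finite subcover, their union $V$ is open, compact, and hence (as $Y$ is Hausdorff) clopen, with $K \subseteq V \subseteq U$.

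To conclude, I would glue: since $V$ is clopen, $\{V, Y \setminus V\}$ is an open cover of $Y$ with empty pairwise intersection, so the sections $\tilde s|_V \in \Gamma(V, F)$ and $0 \in \Gamma(Y \setminus V, F)$ — here the group structure of $F$ is used, to have a zero section — agree on the (empty) overlap and glue to a global section $s' \in \Gamma(Y, F)$. Since $K \subseteq V$ we get $s'|_K = \tilde s|_K = s$, so $F$ is c-soft and therefore soft. The only genuinely topological inputs are the existence of a basis of compact open sets for $Y$ (this is where total disconnectedness together with local compactness is really used) and the neighbourhood-extension lemma for sections over a compact set; neither is a serious obstacle, so I expect the write-up to amount to combining these two classical facts with the extension-by-zero trick across a clopen set.

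If one prefers to argue directly with the definition of softness for an arbitrary closed $K$, the finite-subcover step can be replaced by a countable one, using that $Y$ is Lindelöf: after disjointifying the countable cover into clopen pieces, the resulting union is again clopen (its complement being a union of the remaining clopen pieces), and extension by zero applies verbatim; this variant avoids citing the soft $\Leftrightarrow$ c-soft equivalence.
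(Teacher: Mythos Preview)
Your argument is correct and follows essentially the same route as the paper: both hinge on the observation that a totally disconnected, locally compact Hausdorff space has a basis of compact open sets, so any compact $K$ sits inside a clopen $V$ contained in a prescribed open neighbourhood. The only difference is presentational: the paper records this observation and then defers the passage from ``compact sets have clopen neighbourhood bases'' to (c-)softness to Godement (Sections II.3.3--II.3.4), whereas you unpack that step explicitly via the neighbourhood-extension lemma for sections over a compact set followed by the extension-by-zero across the clopen decomposition $\{V, Y\setminus V\}$, yielding a fully self-contained proof.
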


This seems to be folklore, but can be obtained as follows.
As $Y$ is locally compact Hausdorff and totally disconnected, each point has a base of neighborhood consisting of compact open sets.
Thus, fixing a point $y$ and its compact open neighborhood $U$, any closed subset of $U$, being compact, also has a base of neighborhoods consisting of compact open subsets of $Y$.
This, combined with the paracompactness of $Y$, implies the (c-)softness of sheaves \cite{MR0345092}*{Sections II.3.3 and II.3.4}.

Back to equivariant sheaves over (second countable) ample groupoids, with $G$, $F$, and $M$ as above, the \emph{homology of $G$ with coefficient in $F$}, denoted $H_\bullet(G, F)$, is the homology of the chain complex $(C_c(G^{(n)},\Z) \otimes_{C_c(X, \Z)} M)_{n=0}^\infty$ with differentials coming from the simplicial structure as above. Concretely, the differential is given by
\begin{align*}
\partial_n&\colon C_c(G^{(n)},\Z) \otimes_{C_c(X, \Z)} M \to C_c(G^{(n-1)},\Z) \otimes_{C_c(X, \Z)} M\\
\partial_n(f \otimes m) &= \sum_{i = 0}^{n-1} (-1)^i (d^n_i)_* f \otimes m + (-1)^n \alpha_n(f \otimes m),
\end{align*}
where $\alpha_n$ is the concatenation of the last leg of $C_c(G^{(n)},\Z)$ with $M$ induced by the module structure map $C_c(G, \Z) \otimes M \to M$.
This definition agrees with the one given in \cite{cramo:hom} as there is no need to take c-soft resolutions of equivariant sheaves by Proposition \ref{prop:sheaf-is-soft}.

More generally, if $F_\bullet$ is a chain complex of $G$-sheaves modeled by a chain complex of unitary $C_c(G,\Z)$-modules $M_\bullet$, we define $\bH_\bullet(G, F_\bullet)$, the \emph{hyperhomology} with coefficient $F_\bullet$, as the homology of the double complex $(C_c(G^{(p)},\Z) \otimes_{C_c(X, \Z)} M_q)_{p,q}$.

As usual, a chain map of complexes of $G$-sheaves $f\colon F_\bullet \to F'_\bullet$ is a \emph{quasi-isomorphism} if it induces quasi-isomorphisms on the stalks.
When $F_\bullet$ and $F'_\bullet$ are bounded from below, such maps induce an isomorphism of the hyperhomology \cite{cramo:hom}*{Lemma 3.2}.

\subsection{Triangulated categorical structures}\label{sec:cploc}\label{subsec:homds}

The framework of triangulated categories is ideal for extending basic constructions from homotopy theory to categories of C$^*$-algebras. Much work in this direction has been carried out by Meyer and Nest in \citelist{\cite{meyer:tri}\cite{nestmeyer:loc}\cite{meyernest:tri}}.

We follow their convention which we quickly recall here.
The fundamental axiom requires that there is an autoequivalence $\Sigma$, and any morphism $f\colon A \to B$ should be part of an exact triangle:
\[
A \to B \to C \to \Sigma A.
\]
An additive functor $F$ between triangulated categories is said to be exact when it intertwines suspensions and preserves exact triangles.

We say that $\cT$ has \emph{countable direct sums} if, given a sequence of objects $(A_n)_{n=1}^\infty$ in $\cT$, there is an object $\bigoplus_{n=1}^\infty A_n$ such that
\[
\cT\left(\bigoplus_{n=1}^\infty A_n, B\right) \simeq \prod_{n=1}^\infty \cT(A_n, B)
\]
naturally in the $A_n$ and $B$.
An exact functor $F$ is \emph{compatible with direct sums} if  it commutes with countable direct sums (see \cite{meyer:tri}*{Proposition 3.14}).

As before let $G$ be a second countable locally compact Hausdorff groupoid with a Haar system.
Note that triangulated categories involving $\KKK$-theory have no more than countable direct sums, because separability assumptions are needed for certain analytical results in the background.

\begin{prop}[\cite{proietti-phd-thesis}*{Section A.3}]
The equivariant Kasparov category $\KKK^G$ is triangulated.
\end{prop}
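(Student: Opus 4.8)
The plan is to equip $\KKK^G$ with its evident additive structure (direct sums of $G$-C$^*$-algebras) and the suspension autoequivalence $\Sigma A = C_0(\R) \otimes A$, and then verify the four axioms (TR0)--(TR4) of Verdier by transporting the classical mapping-cone construction for C$^*$-algebras into the equivariant setting. First I would recall that for any $G$-equivariant $*$-homomorphism $f \colon A \to B$ one forms the $G$-C$^*$-algebra mapping cone
\[
\Cone(f) = \{(a, b) \in A \oplus C_0((0,1], B) \mid b(1) = f(a)\},
\]
which carries a natural $G$-action since $G$ acts on each factor and the defining relation is $G$-equivariant; the inclusion $C_0((0,1), B) = \Sigma B \hookrightarrow \Cone(f)$ and the projection $\Cone(f) \to A$ give a candidate exact triangle $\Sigma B \to \Cone(f) \to A \to B$, and one declares the exact triangles in $\KKK^G$ to be those isomorphic (in $\KKK^G$) to one of this form. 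The additivity of $\KKK^G$ and the fact that $\Sigma$ is an autoequivalence (Bott periodicity in the equivariant setting, which follows from Le Gall's construction in \cite{gall:kk}) are standard; the rotation axiom and the octahedral axiom then reduce, exactly as in the group case treated by Meyer--Nest \cite{meyer:tri}*{Appendix} and in \cite{proietti-phd-thesis}*{Section A.3}, to the corresponding homotopy-theoretic statements about mapping cones.

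The key technical input is that a general element of $\KKK^G(A, B)$ can be represented, after replacing $A$ and $B$ by $\KKK^G$-equivalent algebras, by an honest $G$-equivariant $*$-homomorphism, so that every morphism does fit into a triangle of the above shape. Here I would invoke the equivariant stabilization and Cuntz-picture arguments: up to $\KKK^G$-equivalence one may assume $B$ is stable, and then $\KKK^G(A,B) \cong [q\mathcal{K}_G \otimes A, \mathcal{K}_G \otimes B]_G$ for a suitable $G$-equivariantly contractible ideal $q\mathcal{K}_G$, which presents the class of $f$ by a genuine equivariant homomorphism. Combined with the standard mapping-cylinder trick (factoring any homomorphism as a cofibration followed by a homotopy equivalence), this yields axiom (TR1). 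Axioms (TR2)--(TR3) follow from the well-known fact that the Puppe sequence $\Sigma B \to \Cone(f) \to A \to B$ rotates to $\Sigma A \to \Sigma B \to \Cone(f) \to A$ up to homotopy equivalence of the relevant cones, a purely C$^*$-algebraic homotopy argument that goes through verbatim $G$-equivariantly because all the maps and homotopies involved are built functorially out of $f$.

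The main obstacle is the octahedral axiom (TR4): given composable equivariant homomorphisms $f \colon A \to B$ and $g \colon B \to C$, one must produce an exact triangle relating $\Cone(f)$, $\Cone(g)$, and $\Cone(gf)$ together with the compatibility with the three given triangles. As in the nonequivariant case this is handled by an explicit homeomorphism-up-to-homotopy between the iterated cones, using the fact that $\Cone(gf)$ sits inside $\Cone(g)$ as the cofibre of $\Cone(f) \to \Cone(gf)$; the point is simply to check that every algebra and every connecting map occurring in this diagram is $G$-equivariant, which it is because the construction is natural. I would therefore present the proof as: (i) record the additive and suspension structure; (ii) define exact triangles via equivariant mapping cones; (iii) reduce arbitrary $\KKK^G$-morphisms to equivariant $*$-homomorphisms and verify (TR1); (iv) verify (TR2)--(TR3) from the equivariant Puppe sequence; (v) verify (TR4) by the equivariant iterated-cone diagram, citing \cite{meyer:tri} and \cite{proietti-phd-thesis} for the diagram chases that are identical to the known cases. $\qed$
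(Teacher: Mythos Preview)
Your outline matches the paper's treatment in Section~\ref{sec:app-triagl-str-equiv-KK} closely: mapping-cone triangles, $\Sigma^2 \simeq \Id$ via Bott periodicity, the reduction of (TR1) to realizing an arbitrary $\KKK^G$-morphism by a genuine equivariant $*$-homomorphism after replacing source and target, and then importing (TR2)--(TR4) from the group case. (The reference for the latter is \cite{nestmeyer:loc}*{Appendix~A}, not \cite{meyer:tri}.)

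The one substantive point you gloss over is in step (iii). For a locally compact groupoid $G$, Le~Gall's cycles $(\pi,\cE,T)$ only require $T$ to be $G$-equivariant modulo compacts, and the averaging trick that produces a strictly equivariant $T$ in the compact-group case is not available. Your asserted Cuntz-picture isomorphism $\KKK^G(A,B) \cong [q\cK_G \otimes A, \cK_G \otimes B]_G$ already presupposes that this difficulty has been resolved. The paper supplies the missing ingredient via Oyono-Oyono's result from \cite{laff:kkban}*{Section~A.4} (recorded here as Proposition~\ref{prop:T-equivar-up-to-compact}): after tensoring $\cE$ with $L^2(G)^\infty$ one can replace $T$ by a strictly $G$-equivariant $T'$ representing the same class, and only then does the passage to equivariant quasi-homomorphisms, and hence to genuine $*$-homomorphisms, go through (cf.~also \cite{laff:kkban}*{Lemma~A.3.2}). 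You should flag this explicitly; it is the main new input beyond the group-equivariant case. A minor remark: as the paper notes, with the mapping-cone convention one is really verifying the axioms for the opposite category of $\KKK^G$, which is harmless here since $\Sigma \simeq \Sigma^{-1}$.
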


Here, the suspension functor $\Sigma$ is given by $\Sigma A = C_0(\R,A)$.
Note that Bott periodicity implies $\Sigma^2\simeq\text{id}$, so that $\Sigma$ is also a model of $\Sigma^{-1}$.
The exact triangles are defined as the triangles isomorphic to mapping cone triangles for equivariant $*$-homomorphisms.
See Section \ref{sec:app-triagl-str-equiv-KK} for some details.

We also note that functors such as $A \mapsto G \ltimes A$ and $A \mapsto D \otimes A$ preserve mapping cones, hence define triangulated functors into appropriated (equivariant) $\KKK$-categories.
These are also compatible with countable direct sums.

\smallskip
We call a subcategory \emph{thick} when it is closed under direct summands.

\begin{defi}\label{def:compl-pair}
We call a pair $(\cL,\cN)$ of thick triangulated subcategories of $\cT$ \emph{complementary} if $\cT(P,N)=0$ for all $P\in \cL,N\in\cN$, and for any $A\in \cT$, there is an exact triangle
\[
P_A \to A \to N_A \to \Sigma P_A
\]
where $P_A \in \cL$ and $N_A \in \cN$.
\end{defi}

Let us list some of the basic properties of a pair of complementary subcategories (see \cite{nestmeyer:loc}*{Proposition 2.9}).
\begin{itemize}
\item We have $N\in \cN$ if and only if $\cT(P,N)=0$ for all $P\in \cL$. Analogously, we have $P \in \cL$ if and only if $\cT(P,N)=0$ for all $N\in \cN$.
\item The exact triangle as above, with $P_A \in \cL$ and $N_A \in \cN$, is uniquely determined up to isomorphism and depends functorially on $A$. In particular, its entries define functors
\begin{align*}
P\colon \cT & \to \cL, \quad A \mapsto P_A, &
N\colon \cT & \to \cN, \quad A \mapsto N_A.
\end{align*}
The functors $P$ and $N$ are respectively left adjoint to the embedding functor $\cP\to \cT$ and right adjoint to the embedding functor $\cN \to \cT$.
\item The \emph{localizations} $\cT/\cN$ and $\cT/\cL$ exist and the compositions
\begin{align*}
\cL &\longrightarrow \cT \longrightarrow  \cT/\cN,&
\cN &\longrightarrow \cT \longrightarrow  \cT/\cL
\end{align*}
are equivalences of triangulated categories.
\end{itemize}

Most concrete examples come from \emph{homological ideals with enough projectives}, as we quickly recall here.
Let $\cT$ and $\cS$ be triangulated categories with countable direct sums, and $F\colon \cT \to \cS$ be an exact functor compatible with direct sums.
The system of morphisms
\[
\cI(A, B) = \ker (F\colon \cT(A, B) \to \cS(F A, F B))
\]
is an example of \emph{homological ideal} compatible with countable direct sums.

\begin{rema}
\label{rem:target-of-homological-functor}
We do not lose generality by assuming that $\cS$ is a stable abelian category, and that $F$ is a stable functor, see \cite{meyernest:tri}*{Remark 19}.
More concretely, we can always replace the target triangulated category $\cS$ by the category of \emph{representable} contravariant functors $\cS \to \Ab$, which are cokernels of the natural transforms $\cS(\mhyph, A) \to \cS(\mhyph, B)$ induced by some $f\colon A \to B$.
\end{rema}

An object $P\in \cT$ is called \emph{$\I$-projective} if $\I(P,A)=0$ for all objects $A\in \cT$.
An object $N\in \cT$ is called \emph{$\I$-contractible} if $\text{id}_N$ belongs to $\I(N,N)$.
Let $\cP_\I, \cN_\I \subseteq\cT$ be the full subcategories of projective and contractible objects, respectively. 
We say that $\cI$ \emph{has enough projectives} if for any $A \in \cT$, there is an $\cI$-projective object $P$ and a morphism $P \to A$ such that, in the associated exact triangle
\[
P \to A \to C \to \Sigma P,
\]
the morphism $A \to C$ belongs to $\cI$.
With $\cI = \ker F$ as above, the latter condition is equivalent to $F P \to  F A$ being a split surjection for all $A$.

We denote by $\langle {P_\I}\rangle$ the ($\aleph_0$-)\emph{localizing} subcategory generated by the projective objects, i.e., the smallest triangulated subcategory that is closed under countable direct sums and contains $P_\I$.
In particular, $\langle P_\I\rangle$ is closed under isomorphisms, suspensions, and if
\[
A \to B \to C \to \Sigma A
\]
is an exact triangle in $\cT$ where any two of the objects $A,B,C$ are in $\langle P_\I\rangle$, so is the third.
Note that $N_\I$ is localizing, and any localizing subcategory is thick. 

\begin{theo}[{\cite{meyer:tri}*{Theorem 3.16}}]\label{thm:pocs}
Let $\cT$ be a triangulated category with countable direct sums, and let $\I$ be a homological ideal with enough projective objects. Suppose that $\I$ is compatible with countable direct sums. Then the pair of localizing subcategories $(\PI,\cN_\cI)$ in $\cT$ is complementary. 
\end{theo}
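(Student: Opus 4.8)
The plan is to verify directly the two defining properties of a complementary pair (Definition~\ref{def:compl-pair}) for $(\PI,\cN_\I)$. Both subcategories are thick, since a localizing subcategory is thick, so the only things to check are: (i) the orthogonality $\cT(P,N)=0$ for all $P\in\PI$ and $N\in\cN_\I$; and (ii) that every $A\in\cT$ fits into an exact triangle $P_A\to A\to N_A\to\Sigma P_A$ with $P_A\in\PI$ and $N_A\in\cN_\I$.

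For (i), I would first treat the generators: if $P\in\cP_\I$ and $N\in\cN_\I$, then any $f\in\cT(P,N)$ equals $\id_N\circ f$, and since $\id_N\in\I(N,N)$ and $\I$ is an ideal, this forces $f\in\I(P,N)$, which vanishes because $P$ is $\I$-projective. Then I would pass to the full subcategory of those $P\in\cT$ with $\cT(P,N)=0$ for all $N\in\cN_\I$: it is closed under suspension, under countable direct sums (because $\cT(\bigoplus_nP_n,N)\cong\prod_n\cT(P_n,N)$), and under mapping cones (apply the long exact sequence obtained from a triangle by $\cT(-,N)$), hence it is a localizing subcategory; as it contains $\cP_\I$, it contains $\PI$, which is (i).

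For (ii), the plan is to realize an $\I$-projective resolution of $A$ as a homotopy colimit. Using that $\I$ has enough projectives I would inductively build exact triangles $P_n\to C_{n-1}\to C_n\to\Sigma P_n$ with $C_{-1}=A$, each $P_n$ being $\I$-projective and each transition map $\pi_n\colon C_{n-1}\to C_n$ belonging to $\I$ (this is exactly the "enough projectives" condition applied to $C_{n-1}$). Set $N_A=\hocolim_nC_n$ along the $\pi_n$, and let $A\to N_A$ be induced by the composites $A\to C_n$. To see that $P_A:=\Sigma^{-1}\operatorname{cone}(A\to N_A)$ lies in $\PI$: rotating the $n$-th triangle gives $\operatorname{cone}(\pi_n)\simeq\Sigma P_n\in\PI$, so repeated application of the octahedral axiom to the composites $A\to C_{n-1}\to C_n$ together with the two-out-of-three property of $\PI$ shows $\operatorname{cone}(A\to C_n)\in\PI$ for every $n$; since $\PI$ is closed under countable direct sums and mapping cones it is closed under homotopy colimits, and the cone of $A\to\hocolim_nC_n$ is the homotopy colimit of the cones of $A\to C_n$, hence lies in $\PI$. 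This produces the triangle in (ii) with the correct left-hand term.

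The main obstacle, as I see it, is the remaining claim $N_A\in\cN_\I$, i.e. that a homotopy colimit of a sequence of $\I$-morphisms is $\I$-contractible. I would handle it by reducing to the case $\I=\ker(F\colon\cT\to\cS)$ with $F$ exact and compatible with countable direct sums and $\cS$ a stable abelian category — this is automatic in the concrete instances of interest, and in general one passes to the universal $\I$-exact stable homological functor (compare Remark~\ref{rem:target-of-homological-functor}), the point being precisely that this universal functor inherits compatibility with countable direct sums from the hypothesis on $\I$. Granting this, $\cN_\I$ is the kernel of $F$ on objects, $F$ commutes with the homotopy colimit defining $N_A$, and since each $F(\pi_n)=0$ the object $F(N_A)$ is the homotopy colimit of a sequence with vanishing transition maps, hence the cone of an identity morphism, hence zero; so $N_A\in\cN_\I$. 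A direct argument staying inside $\cT$ would instead have to control the $\varprojlim^1$-term in the Milnor sequence computing $\cT(N_A,N_A)$ — the phantom self-maps of $N_A$ — and showing that these lie in $\I$ is exactly where compatibility of $\I$ with countable direct sums enters; this is the delicate point of the whole argument.
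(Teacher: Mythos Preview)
The paper does not give its own proof of this theorem; it is quoted verbatim from \cite{meyer:tri}*{Theorem 3.16} and used as a black box. So there is nothing in the paper to compare against, and your proposal should be read as an attempt to reconstruct Meyer's original argument.

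Your outline is essentially Meyer's proof. Part (i) is fine. For part (ii), the phantom-tower construction $P_n\to C_{n-1}\to C_n$ with $\pi_n\in\cI$ and the identification $N_A=\hocolim_nC_n$, $P_A=\Sigma^{-1}\Cone(A\to N_A)$ is exactly the standard route. Two places deserve a bit more care. First, the assertion that ``the cone of $A\to\hocolim_nC_n$ is the homotopy colimit of the cones of $A\to C_n$'' is correct but not automatic: cones are not functorial, so you must either invoke a $3\times 3$ argument on the two defining hocolim triangles (for the constant tower on $A$ and for $(C_n)$), or first choose compatible maps $\tilde P_{n-1}\to\tilde P_n$ via the octahedral axiom and then take the hocolim of that tower. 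Second, your handling of $N_A\in\cN_\cI$ is the right one, and your diagnosis of the difficulty is accurate: the clean way is to realize $\cI=\ker F$ with $F$ a stable homological functor \emph{compatible with countable direct sums}, after which $F(\sigma)=0$ forces $1-F(\sigma)=\id$ in the long exact sequence of the hocolim triangle and hence $F(N_A)=0$. The step you flag --- that the universal $\cI$-exact stable homological functor inherits compatibility with countable direct sums from $\cI$ --- is a genuine ingredient; it is established in Meyer's framework (this is precisely where the hypothesis on $\cI$ is consumed), but it is not a triviality and should be cited rather than asserted. In the concrete applications of this paper the functor $F=\Res^G_X$ is given and visibly preserves direct sums, so the issue does not arise there.
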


\begin{rema}
Note that if $(\cL, \cN)$ is a complementary pair, then $\ker P$ has enough projectives and we have $\cL = \cP_{\ker P}$, $\cN = \cN_{\ker P}$.
Thus the above construction is universal, although $\cI$ is not uniquely determined from $(\PI, \cN_\cI)$.
\end{rema}

\begin{defi}
\label{def:I-proj-res}
Let $F\colon \cT \to \cS$ be an exact functor compatible with countable direct sums.
Given an object $A\in \cT$ and a chain complex
\begin{equation}\label{eq:projres}
\begin{tikzcd}
\cdots \ar[r,"\delta_{n+1}"] & P_n \ar[r,"\delta_n"] & \cdots \ar[r,"\delta_1"] & P_0 \ar[r,"\delta_0"] & A,
\end{tikzcd}
\end{equation} 
we say that \eqref{eq:projres} is an (even) \emph{$\cI$-projective resolution} of $A$ if each $P_n$ is $\cI$-projective and the chain complex
\[
\begin{tikzcd}
\cdots\ar[r,"F(\delta_2)"] & F(P_1) \ar[r,"F(\delta_1)"] & F(P_0) \ar[r,"F(\delta_0)"] & F(A) \ar[r] & 0
\end{tikzcd}
\]
is split exact, i.e., is contractible by chain homotopy in $\cS$.
\end{defi}

There is also an intrinsic formulation of \emph{$\cI$-exactness} for chain complexes corresponding to the second condition above, and the above definition does not depend on the choice of $F$ with $\cI = \ker F$.
Moreover, if $\cI$ has enough projectives, any $A$ has an $\cI$-projective resolution.
In particular, two $\cI$-projective resolutions of $A$ are chain homotopy equivalent, and we obtain functor $\cT \to \mathrm{Ho}(\cT)$.
Moreover, if $P_\bullet$ is an $\cI$-projective resolution of $A$, the object $P_A$ in Definition \ref{def:compl-pair} (an \emph{$\cI$-simplicial approximation} of $A$) belongs to the localizing subcategory generated by the objects $P_k$.

\begin{defi}
An \emph{odd} $\cI$-projective resolution is an $\cI$-projective resolution where the boundary maps of positive index have degree one, i.e., the morphism $\delta_n\colon P_n\to P_{n-1}$ gets replaced, for $n\geq 1$, by a morphism $\delta_n\colon P_n \to \Sigma P_{n-1}$.
\end{defi}

Evidently, if $(P_n,\delta_n)$ is an odd projective resolution, then $(P^\prime_n,\delta^\prime_n)$ is an even resolution, where $P^\prime_n=\Sigma^{-n}P_n$ and $\delta^\prime_n=\Sigma^{-n}\delta_n$.

Let $K\colon \cT \to \mathcal{C}$ be a covariant homological functor into a stable abelian category.
We put $K_n(A) =K(\Sigma^{-n}A)$.
Let us recall a few extra constructions on $K$ motivated by homological algebra.

\begin{defi}
Let $(\cL, \cN)$ be a complementary pair, with $P \colon \cT \to \cL$.
The \emph{localization} of $K$ with respect to $\cN$ is defined by $\bL^\cN K=K\circ P$.
\end{defi}

The defining morphisms $P(A)\to A$ induce a natural transformation $\bL^\cN K \Rightarrow K$.

\begin{defi}
Let $\cI$ be a homological ideal with countable direct sums and enough projectives.
The $p$-th \emph{derived functor} of $K$ with respect to $\cI$ is defined as
\[
\bL^\cI_p K(A) = H_p(K(P_\bullet)),
\]
where $P_\bullet$ is any $\cI$-projective resolution of $A$.
\end{defi}

This is well-defined because projective resolutions are unique up to chain homotopy.
Note that unless $K$ is compatible with $\cI$-exact sequences, one cannot expect $\bL^\cI_0 K \simeq K$.
When $(\cL, \cN)$ is a complementary pair, we can think of the localization $\bL^\cN K$ as the derived functor $\bL_0^{\ker P} K$ for $P \colon \cT \to \cL$ up to the embedding of Remark \ref{rem:target-of-homological-functor}.

Building on the idea of Christensen \cite{chri:tri} to understand the Adams spectral sequence, Meyer constructed the following spectral sequence.

\begin{theo}[{\cite{meyer:tri}*{Theorems 4.3 and 5.1}}]\label{thm:spseqtri}
Let $\cI$ be a homological ideal with countable direct sums and enough projectives, and let $K\colon \cT \to \Ab$ be a homological functor to the category of commutative groups.
Then there is a convergent spectral sequence
\[
E^r_{p q} \Rightarrow \bL^{\cN_\cI} K_{p+q}(A),
\]
with the $E^2$-sheet $E^2_{p q} = \bL^\cI_p K_q(A)$.
\end{theo}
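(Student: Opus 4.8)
The plan is to follow the strategy underlying Meyer's ABC spectral sequence (building on Christensen's treatment of the Adams spectral sequence): convert an $\cI$-projective resolution of $A$ into a tower of exact triangles in $\cT$ and feed it into the classical exact-couple machinery. Since $\cI$ has enough projectives, fix an $\cI$-projective resolution $P_\bullet \to A$ as in \eqref{eq:projres}, and fix (using the general theory recalled in Remark \ref{rem:target-of-homological-functor}) a stable homological functor $F$ into a stable abelian category with $\cI = \ker F$. The first step is to realize $P_\bullet$ as a \emph{cellular tower}: objects $\hat P_0 \to \hat P_1 \to \hat P_2 \to \cdots$ with compatible maps $\varepsilon_n \colon \hat P_n \to A$ and exact triangles
\[
\hat P_{n-1} \longrightarrow \hat P_n \longrightarrow \Sigma^n P_n \longrightarrow \Sigma\hat P_{n-1},
\]
with $\hat P_0 = P_0$ and $\varepsilon_0 = \delta_0$, where the attaching map $\Sigma^n P_n \to \Sigma\hat P_{n-1}$ is built from $\delta_n$ (so that its composite with $\Sigma\hat P_{n-1} \to \Sigma^n P_{n-1}$ equals $\pm\Sigma^n\delta_n$). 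This is done by induction using the octahedral axiom: the relations $\delta_{n}\delta_{n+1} = 0$ together with the lifting properties of $\cI$-epimorphisms supply the next attaching map and the next map to $A$. Set $\hat P_\infty = \hocolim_n \hat P_n$, with the induced map $\varepsilon\colon \hat P_\infty \to A$.

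Second, I claim that the exact triangle $\hat P_\infty \xrightarrow{\varepsilon} A \to N \to \Sigma\hat P_\infty$ is the complementary decomposition of $A$ with respect to $(\langle\cP_\cI\rangle, \cN_\cI)$ from Theorem \ref{thm:pocs}. Each $\hat P_n$ is obtained from $P_0, \dots, P_n$ by finitely many suspensions and mapping cones, hence lies in $\langle\cP_\cI\rangle$; since $\langle\cP_\cI\rangle$ is localizing (closed under countable direct sums and mapping cones) it contains the mapping telescope, so $\hat P_\infty \in \langle\cP_\cI\rangle$. On the other hand, applying $F$ to the tower gives, degree by degree, the split-exact complex $F(P_\bullet) \to F(A)$, so $F(\varepsilon)$ is an isomorphism and $F(N) = 0$, i.e. $N \in \cN_\cI$. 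By uniqueness of the complementary decomposition (see the properties listed after Definition \ref{def:compl-pair}) we conclude $\hat P_\infty \simeq P_A$ — consistent with the fact recalled after Definition \ref{def:I-proj-res} that $P_A$ lies in the localizing subcategory generated by the $P_k$.

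Third, apply the homological functor $K_\bullet$ to the cellular tower. The long exact sequences of the triangles $\hat P_{n-1} \to \hat P_n \to \Sigma^n P_n$ splice into an exact couple with $D^1_{pq} = K_{p+q}(\hat P_p)$ and $E^1_{pq} = K_{p+q}(\Sigma^p P_p) \cong K_q(P_p)$, and by construction the induced $d^1\colon E^1_{pq}\to E^1_{p-1,q}$ is $K_q(\delta_p)$; hence
\[
E^2_{pq} = H_p\bigl(K_q(P_\bullet)\bigr) = \bL^\cI_p K_q(A)
\]
by the definition of the derived functor. Because $P_p = 0$ for $p<0$, the differentials $d^r\colon E^r_{pq}\to E^r_{p-r,q+r-1}$ vanish for $r>p$, so this is the half-plane spectral sequence of the filtered object $\hat P_\infty = \bigcup_p \hat P_p$; its abutment in degree $n$ is filtered by the images of $K_n(\hat P_p)$, and this filtration is exhaustive because $K$, being compatible with countable direct sums, sends the telescope $\hat P_\infty$ to $\varinjlim_p K_n(\hat P_p)$. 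Thus the spectral sequence converges (in the sense appropriate to a half-plane spectral sequence) to $K_n(\hat P_\infty) = K_n(P_A) = \bL^{\cN_\cI}K_n(A)$, as claimed.

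I expect the genuine work to be concentrated in the second step: making the cellular tower precise (the octahedral bookkeeping for the attaching maps and the compatibility of the $\varepsilon_n$) and, above all, identifying $\hocolim_n\hat P_n$ with the $P_A$ of the complementary pair — that is, verifying that the cofiber $N$ is $\cI$-contractible, which amounts to controlling how the chosen $F$ interacts with the homotopy colimit. One can instead shortcut the whole argument by invoking Meyer's construction directly, as Theorem \ref{thm:spseqtri} is exactly \cite{meyer:tri}*{Theorems 4.3 and 5.1}.
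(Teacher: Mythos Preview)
The paper gives no proof of this result; it cites Meyer and adds only the remark that the $E^r$-differentials come from a \emph{phantom tower} and its associated exact couple. Your sketch therefore goes further than the paper itself does, and your closing sentence (just invoke \cite{meyer:tri}) is exactly what the paper opts for.

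Your cellular tower is the companion of the phantom tower the paper alludes to: in Meyer's language both are faces of the same \emph{phantom castle}, your $\hat P_n$ being the fibre of $A\to N_{n+1}$ up to a shift. Either tower feeds the same exact couple, so your identification of the $E^1$- and $E^2$-pages is correct.

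There is, however, a genuine gap in the convergence step. You assert that the filtration on $K_n(\hat P_\infty)$ is exhaustive ``because $K$, being compatible with countable direct sums, sends the telescope $\hat P_\infty$ to $\varinjlim_p K_n(\hat P_p)$'' --- but the theorem does \emph{not} assume that $K$ commutes with countable direct sums; only the ideal $\cI$ is assumed compatible with them. Without that hypothesis on $K$ the filtration need not be exhaustive, and your argument does not establish convergence in the stated generality; Meyer's Theorem~5.1 handles this point differently. A milder form of the same slip appears in your step~2: to deduce $F(N)=0$ you need $F$ to interact well with the telescope. That instance is repairable --- since $\cI$ is compatible with direct sums one may \emph{choose} such an $F$ --- but Remark~\ref{rem:target-of-homological-functor}, which you invoke, does not by itself furnish it. In the paper's applications the functor $K=K_\bullet(G\ltimes\mhyph)$ does commute with countable direct sums, so the gap is harmless there; it nonetheless leaves the general statement unproved by your route.
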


The $E^r$-differentials $d^r\colon E^r_{p q} \to E^r_{p-r, q+r-1}$ come from a choice of \emph{phantom tower} for $A$ and the associated \emph{exact couple}, but their precise form will not be important for us.

\subsection{The Baum--Connes conjecture for groupoids}\label{sec:bc}

Because we are particularly interested in spectral sequences which approximate the $K$-groups of groupoid C$^*$-algebras, the Baum--Connes conjecture naturally plays a fundamental role. The notion of pair of complementary subcategories introduced earlier allows for a general formulation of this conjecture in terms of localization at the subcategory contractible objects. 

However, as our main focus is on torsion-free amenable groupoids, we will not need the full machinery for our applications, hence we limit ourselves to simply recalling the main positive result concerning the conjecture for groupoids with the \emph{Haagerup property}.
Namely, $G$ is said to have the Haagerup property if it acts properly and isometrically by affine maps on a continuous field of (real) Hilbert spaces, or equivalently, if there is a proper conditionally negative type function on $G$ \cite{bcv:aat}.
Analogously to the case of groups, amenable groupoids have this property.

Suppose $G$ is a second countable, locally compact, Hausdorff groupoid with Haar system.
In the following, the crossed product is understood to be \emph{reduced}.

\begin{defi}\label{def:proper}
A $G$-algebra $A$ is said to be \emph{proper} if there is a locally compact Hausdorff proper $G$-space $Z$ such that $A$ is a $G \ltimes Z$-algebra. 
\end{defi}

Evidently, a commutative $G$-C$^*$-algebra is proper if and only if its spectrum is a proper $G$-space.

\begin{rema}
If $G$ is locally compact, $\sigma$-compact, and Hausdorff, then there is a locally compact, $\sigma$-compact, and Hausdorff model of $\underbar{E}G$, the classifying space for proper actions of $G$; in our case $G$ is second countable hence $\underbar{E}G$ is too \cite{tu:novikov}*{Proposition 6.15}. 
In Definition \ref{def:proper} for a proper $G$-algebra we can always assume $Z$ to be a model of $\underbar{E}G$.
Indeed if $\phi\colon Z\to \underbar{E}G$ is a $G$-equivariant continuous map, then $\phi^*\colon C_0(\underbar{E}G) \to \cM(C_0(Z)) = C_b(Z)$ can be precomposed with the structure map $\Phi\colon C_0(Z) \to Z(\cM(A))$, making $A$ into an $G \ltimes \underbar{E}G$-algebra. 
\end{rema}

We will need the following result proved by J.-L. Tu.

\begin{theo}[{\cite{tu:moy}}]\label{thm:tu}
Suppose that $G$ has the Haagerup property.
Then there exists a proper $G$-space $Z$ with an open surjective structure morphism $Z \to X$, and a $G \ltimes Z$-C$^*$-algebra $P$ which is a continuous field of nuclear C$^*$-algebras over $Z$, and such that $P\simeq C_0(X)$ in $\KKK^G$. 
\end{theo}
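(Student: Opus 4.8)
The plan is to follow the Higson--Kasparov--Tu strategy, producing $P$ as an infinite-dimensional \emph{Bott algebra} attached to an affine isometric action. By the Haagerup hypothesis, $G$ acts properly, isometrically and affinely on a continuous field of real Hilbert spaces $\cE = (\cE_x)_{x\in X}$; write $g\cdot v = \pi(g)v + b(g)$ for $v\in\cE_{s(g)}$, with orthogonal part $\pi(g)$ and cocycle $b(g)\in\cE_{r(g)}$, properness being encoded in the conditionally negative type function $\psi(g)=\norm{b(g)}^2$ (see \cite{bcv:aat}). First I would fix a locally compact, $\sigma$-compact, Hausdorff model of $Z=\underline{E}G$, which exists by \cite{tu:novikov}*{Proposition 6.15} as recalled above; for a suitable model the structure map $Z\to X$ is open (if not, openness can be forced as for $r\colon G\to X$) and it is surjective. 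For $P$ I would take $\mathcal{A}(\cE)$, the Bott algebra of $\cE$ in the sense of Higson--Kasparov--Trout: fibrewise it is the graded C$^*$-algebra $\cS\hot\varinjlim_V\bigl(C_0(V)\hot\Cliff(V)\bigr)$, where $\cS = C_0(\R)$ is graded by parity and $V$ runs over the finite-dimensional affine subspaces of $\cE_x$; globally these assemble into a $C_0(X)$-algebra on which the affine action of $G$ induces a continuous action, since the Bott construction is functorial for affine isometric isomorphisms, which is exactly what $G$ supplies.

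Next I would verify the three structural requirements on $P$. Continuous field with nuclear fibres: each $C_0(V)\hot\Cliff(V)$ is type~$I$, hence nuclear, so each fibre of $P$ is a countable inductive limit of nuclear C$^*$-algebras and therefore nuclear; and the local description of the defining inductive system over $X$ yields continuity of the field. Properness of $P$ as a $G$-algebra in the sense of Definition~\ref{def:proper} is the more delicate point: one exhibits $P$ as a $C_0(Z')$-algebra over a genuinely \emph{locally compact} proper $G$-space $Z'$ manufactured from the finite-dimensional truncations of $\cE$, with supports kept under control by the properness of $\psi$ (this is what survives the fact that $\cE$ itself is infinite-dimensional), and then composes a $G$-equivariant continuous map $Z'\to\underline{E}G$ with the structure map, as in the Remark following Definition~\ref{def:proper}, to see $P$ as a $G\ltimes Z$-algebra.

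Finally, the equivalence $P\simeq C_0(X)$ in $\KKK^G$ is a Dirac--dual-Dirac argument. I would construct a Bott class $\beta\in\KKK^G(C_0(X),P)$ from the fibrewise unbounded Kasparov module assembled from Clifford multiplication and the number operator on $\cE_x$ --- which is $G$-equivariant precisely because it is built only from the affine structure preserved by $G$ --- and a Dirac class $\alpha\in\KKK^G(P,C_0(X))$ from the fibrewise Dirac operator. The composite $\alpha\circ\beta = \mathrm{id}_{C_0(X)}$ is finite-dimensional Bott periodicity applied fibrewise, essentially a rescaling homotopy, and is routine. The main obstacle is the other composite, $\beta\circ\alpha = \mathrm{id}_P$: this is infinite-dimensional Bott periodicity, and it is here --- and essentially only here --- that the \emph{properness} of the affine action is used, via a rotation homotopy on $\mathcal{A}(\cE\oplus\cE)\cong P\hot P$ interpolating between the identity and the coordinate flip; the interpolating operators are $G$-equivariantly admissible only because the homotopy remains inside proper algebras (equivalently, acquires locally compact ``resolvents'' after tensoring with $C_0$ of a proper $G$-space). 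Assembling $\alpha$, $\beta$, and these two homotopies gives the asserted $\KKK^G$-equivalence.
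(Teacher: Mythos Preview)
The paper does not prove this theorem at all: it is quoted from the literature, with the citation \cite{tu:moy} attached to the theorem header, and no argument is given. So there is no ``paper's own proof'' to compare against.

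Your sketch is a reasonable high-level outline of the Higson--Kasparov strategy that Tu adapts to the groupoid setting, and the broad shape --- construct a Bott algebra $\mathcal{A}(\cE)$ from the proper affine action on a continuous field of Hilbert spaces, exhibit Dirac and dual-Dirac classes, and invoke infinite-dimensional Bott periodicity for the hard composite --- is correct. A couple of points would need tightening before this became a proof. First, your treatment of $Z$ is circuitous: you fix $Z=\underline{E}G$ at the outset, then build a different proper space $Z'$ from finite-dimensional truncations, then map back; Tu instead produces the proper $G$-space directly from the field of affine Euclidean spaces, and this is also where the openness and surjectivity of $Z\to X$ come from rather than from a general property of $\underline{E}G$. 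Second, your explanation of why $\beta\circ\alpha=\id_P$ goes through conflates the Higson--Kasparov--Trout asymptotic-morphism rotation (an $E$-theory argument) with Tu's $\KKK$-theoretic treatment; the role of properness is not quite that ``the homotopy remains inside proper algebras'' but rather that it guarantees the compactness/Fredholm conditions needed for the relevant Kasparov cycles. These are genuine technical issues, but since the paper only cites the result, there is nothing further to compare.
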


As a consequence, if $A$ is a separable $G$-C$^*$-algebra, then we have that $A\otimes_{C_0(X)}P$ is a proper $G$-C$^*$-algebra that is $\KKK^G$-equivalent to $A$.

In this paper, for a general topological groupoid $G$ we say that it satisfies the strong Baum--Connes conjecture if the conclusions of the previous theorem hold. This definition implies the standard version of the conjecture. More precisely, if $D\colon P \to C_0(X)$ is the isomorphism from Theorem \ref{thm:tu}, there is a commutative diagram
\[
\begin{tikzcd}
\varinjlim_{Y \subseteq \E G} \KKK^G(C_0(Y),A) \arrow[r, "\mu^G_A"] \arrow[d] &  K_\bullet(G \ltimes A) \\
K_\bullet(G \ltimes (A\otimes_{C_0(X)}P)) \arrow[ru, "j_G(D\, \widehat{\otimes} \Id_A)" '] &  
\end{tikzcd}
\]
where all arrows are isomorphisms and $Y$ runs over $G$-compact invariant subsets of $\E G$ (\cite{meyereme:dualities}*{Theorem 6.12}, see also \cite{nestmeyer:loc}).
The functor $j_G$ above is the descent morphism of Kasparov \cite{kas:descent} which has been generalized to this context in \citelist{\cite{gall:kk}\cite{laff:kkban}}. 

\section{Approximation in the equivariant KK-category}
\label{sec:approx-equivar-KK}

\subsection{Simplicial approximation from adjoint functors}

One powerful way to check that a homological ideal has enough projectives is to relate it to adjoint functors between triangulated categories.
More precisely, let $\cS$ and $\cT$ be triangulated categories with countable direct sums, and $E \colon \cS \to \cT$ and $F \colon \cT \to \cS$ be exact functors compatible with countable direct sums, with natural isomorphisms
\begin{equation}
\label{eq:adj-functors}
\cS(A, F B) \simeq \cT(E A, B) \quad (A \in \cS, B \in \cT).
\end{equation}
Then $\cI = \ker F$ has enough projectives and the $\cI$-projective objects are retracts of $E A$ for some $A\in \cS$ \cite{meyernest:tri}*{Section 3.6}.

Our next goal is to give an explicit projective resolution in this setting.
In fact, this situation is quite standard in homological algebra: such adjoint functors give a \emph{comonad} $L = E F$ on $\cT$, from which we obtain a simplicial object $(L^{n+1} A)_{n=0}^\infty$ giving a ``resolution'' of $A$ \cite{wei:homalg}*{Section 8.6}.

\begin{prop}\label{prop:simplicial-res-from-adj-fs}
In the above setting, any $A \in \cT$ admits an $\cI$-projective resolution $P_\bullet$ consisting of $P_n = L^{n+1} A$. The pair of subcategories $(\langle E\cS \rangle, \cN_\cI)$ is complementary.
\end{prop}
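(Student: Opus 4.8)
The plan is to build the resolution directly from the comonad $L=EF$ on $\cT$ associated with the adjunction \eqref{eq:adj-functors}. Its counit is $\epsilon\colon EF\to\id_\cT$ and its comultiplication $L\to L^2$ is obtained by whiskering the unit $\eta\colon\id_\cS\to FE$; together these make $(L^{n+1}A)_{n\ge 0}$ into an augmented simplicial object over $A$, with face maps $d_i=L^i\epsilon_{L^{n-i}A}$, degeneracies induced by $\eta$, and augmentation $\epsilon_A\colon LA\to A$ (this is the classical construction; see \cite{wei:homalg}*{Section 8.6}). Setting $P_n=L^{n+1}A$, $\delta_0=\epsilon_A$, and $\delta_n=\sum_{i=0}^n(-1)^i d_i$ for $n\ge1$, one obtains a chain complex of the shape \eqref{eq:projres}, and the claim is that this is an $\cI$-projective resolution of $A$ in the sense of Definition \ref{def:I-proj-res}. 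Two points must be checked: each $P_n$ is $\cI$-projective, and $F$ applied to the augmented complex is split exact in $\cS$.

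The first point is immediate: $P_n=L^{n+1}A=E\bigl(FL^nA\bigr)$ lies in the image of $E$, and objects in the image of $E$ are $\cI$-projective by \cite{meyernest:tri}*{Section 3.6}. That $\delta_n\delta_{n+1}=0$ for $n\ge1$ and $\delta_0\delta_1=0$ follow from the simplicial identities together with naturality of $\epsilon$ (for instance $\epsilon_A\circ\epsilon_{LA}=\epsilon_A\circ L\epsilon_A$), so $(P_\bullet,\delta_\bullet)$ is indeed a complex augmented to $A$.

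The substance of the first assertion is the split exactness of $F(P_\bullet)\to F(A)$. I would deduce it by passing to the monad $T=FE$ on $\cS$: there are canonical identifications $FL^nA\cong T^n(FA)$ under which applying $F$ to the augmented simplicial object $L^{\bullet+1}A\to A$ produces exactly the bar resolution $T^{\bullet+1}(FA)\to FA$ of the $T$-algebra $FA$, with structure map $F\epsilon_A\colon T(FA)\to FA$ (the $T$-algebra axioms for this map amount to the triangle identity $F\epsilon\circ\eta F=\id_F$ and naturality of $\epsilon$). This augmented simplicial object of $\cS$ carries an extra degeneracy $s_{-1}$ supplied by the unit $\eta$, and the simplicial identities then yield, in any additive category and hence in $\cS$, an explicit contracting homotopy for the associated alternating-sum complex built out of $s_{-1}$; this is precisely the computation in \cite{wei:homalg}*{Section 8.6}. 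Hence $F(\delta_\bullet)$ is contractible, $P_\bullet$ is an $\cI$-projective resolution of $A$, and in particular $\cI=\ker F$ has enough projectives. I expect the one genuinely fiddly step to be the bookkeeping here — matching the comonad $L$ on $\cT$ with the monad $T$ on $\cS$ term by term, and verifying that the extra degeneracy $\eta$ meshes with the bar differential to give an honest chain homotopy and not merely a simplicial homotopy.

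For the second assertion — complementarity of $(\langle E\cS\rangle,\cN_\cI)$ — the strategy is to identify $\langle E\cS\rangle$ with the localizing subcategory $\PI$ generated by the $\cI$-projectives and then to invoke Theorem \ref{thm:pocs}. Every object of $E\cS$ is $\cI$-projective, so $\langle E\cS\rangle\subseteq\PI$; conversely every $\cI$-projective object is a retract (direct summand) of some $EX$ with $X\in\cS$ (again \cite{meyernest:tri}*{Section 3.6}), and a localizing subcategory is thick, whence $\cP_\I\subseteq\langle E\cS\rangle$ and therefore $\PI\subseteq\langle E\cS\rangle$; thus $\langle E\cS\rangle=\PI$. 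Finally $\cI=\ker F$ is a homological ideal compatible with countable direct sums (because $F$ is) and has enough projectives by the previous paragraph, so Theorem \ref{thm:pocs} applies and gives that $(\PI,\cN_\cI)=(\langle E\cS\rangle,\cN_\cI)$ is complementary.
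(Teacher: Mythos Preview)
Your proof is correct and follows essentially the same route as the paper: both build the simplicial object $(L^{n+1}A)_n$ from the comonad $L=EF$ with face maps $L^i\epsilon_{L^{n-i}A}$, take the alternating-sum differential, and obtain the contracting homotopy on $F(P_\bullet)\to F(A)$ from the unit $\eta$ (the paper writes it directly as $h_n=\eta_{FL^nA}$, which is exactly your extra degeneracy after unwinding your monad identification $FL^n\cong T^n F$). Your argument for $\langle E\cS\rangle=\PI$ is slightly more explicit than the paper's, which simply invokes Theorem~\ref{thm:pocs} after recalling (just before the proposition) that the $\cI$-projectives are retracts of objects in $E\cS$; but the content is identical.
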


\begin{proof}
Let us denote the structure morphisms of the adjunction as
\begin{align*}
\epsilon_B &\in \cT(L B, B),&
\eta_A &\in \cS(A, F E A),
\end{align*}
so that the isomorphism \eqref{eq:adj-functors} is given by
\begin{align*}
\cS(A, F B) &\to \cT(E A, B)& \cT(E A, B) &\to \cS(A, F B)\\
 f &\mapsto \epsilon_B E(f)& g &\mapsto F(g) \eta_A.
\end{align*}

As already observed in \cite{meyernest:tri}, the objects of the form $E A$ are $\cI$-projective.
Indeed, if $g \in \cT(E A, B)$ is in the kernel of $F$, the corresponding morphism in $\cS(A, F B)$ is zero by the above presentation.

Next, let us recall the comonad structure on $L$.
There are natural transformations $L \to \Id_\cT$ and $L \to L^2$ defining a coalgebra structure on $L$.
The counit is simply given by the morphisms $\epsilon_B$, while the comultiplication is  given by $\delta_B = E(\eta_{F B}) \in \cT(L B, L^2 B)$.
The compatibility condition between these reduces to consistency between $\epsilon$ and $\eta$.

Now we are ready to define a structure of simplicial object on $(P_n)_{n=0}^\infty$ as in the assertion.
The \emph{face} morphisms $d^n_i \colon P_n \to P_{n-1}$ ($0 \le i \le n$) are
\[
d_i^n = L^i(\epsilon_{L^{n-i} A}) \colon L^{n+1} A \to L^n A,
\]
while the \emph{degeneracy} morphisms $s^n_i\colon P_n \to P_{n+1}$ ($0 \le i \le n$) are
\[
s_i^n=L^i(\delta_{L^{n-i} A})\colon L^{n+1}A \to L^{n+2}A,
\]
see \cite{wei:homalg}*{Paragraph 8.6.4}.
The associated Moore complex on $(P_n)_{n = 0}^\infty$  is given by
\begin{equation}\label{eq:chcom}
\delta_n=\sum_{i=0}^n (-1)^i d^n_i \colon P_n \to P_{n-1},
\end{equation}
together with the augmentation morphism $\delta_0=\epsilon \colon P_0 = L A \to A$.

Let us check the $\cI$-exactness of the augmented complex, or as in Definition \ref{def:I-proj-res}, the split exactness of
\[
\cdots \to F L^2 A \to  F L A \to  F A \to 0
\]
for all $A$ in a natural way.
We claim that the the complex
\[
\cdots \to F L^2 A \to F L A \to F A \to 0
\]
in $\cS$ is contractible.
Again this is a consequence of a standard argument: the contracting homotopy is given by $h_n = \eta_{ F L^n A}\colon F L^n A \to F L^{n+1} A$ for $n \ge 0$, see \cite{wei:homalg}*{Proposition 8.6.10}.

Finally, the assertion that $\langle E\cS \rangle$ and $\cN_\cI$ are complementary follows from Theorem \ref{thm:pocs}.
\end{proof}

We will apply the previous proposition in the setting $\cT=\KKK^G$, $\cS=\KKK^X$, $F=\Res_X^G$.
As the functor $E$, we take
\[
E A = \Ind^G_X A = C_0(G) \tens{s}{C_0(X)} A
\]
for $C_0(X)$-algebras $A$, endowed with the left translation action of $G$.
This is indeed left adjoint to $F$ by \cite{bon:goingdownbc}.

\subsection{The Baum--Connes conjecture for torsion-free groupoids}

Hereafter it is assumed that $G$ is étale and that it satisfies the conclusion of Theorem \ref{thm:tu}.
Our next goal is the following result.

\begin{theo}
\label{thm:KK-induction-from-base}
Suppose that $G$ is an étale groupoid with torsion-free stabilizers satisfying the conclusion of Theorem {\normalfont\ref{thm:tu}}.
Then any $G$-C$^*$-algebra $A$ belongs to the localizing subcategory generated by the $G$-C$^*$-algebras $(\Ind^G_X \Res^G_X)^n A$ for $n \ge 1$.
\end{theo}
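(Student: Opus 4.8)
The plan is to feed the adjunction $\Ind^G_X\dashv\Res^G_X$ into Proposition~\ref{prop:simplicial-res-from-adj-fs}, and then to feed Tu's theorem together with the torsion-freeness hypothesis into the resulting complementary pair. Write $\cT=\KKK^G$, $\cS=\KKK^X$, $F=\Res^G_X$, $E=\Ind^G_X$, $\cI=\ker F$, and $L=EF=\Ind^G_X\Res^G_X$. Proposition~\ref{prop:simplicial-res-from-adj-fs} produces an $\cI$-projective resolution $P_\bullet$ of $A$ with $P_n=L^{n+1}A$, and makes $(\langle E\cS\rangle,\cN_\cI)=(\langle\Ind^G_X\KKK^X\rangle,\cN_\cI)$ into a complementary pair; let $P_A\to A\to N_A\to\Sigma P_A$ be the associated exact triangle. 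By the discussion after Definition~\ref{def:I-proj-res}, $P_A$ lies in the localizing subcategory generated by the $P_n$, hence by the objects $L^nA$ with $n\ge 1$. Thus the theorem follows once we show $A\simeq P_A$; and since the pair is complementary, $A\simeq P_A$ is equivalent to $A\in\langle\Ind^G_X\KKK^X\rangle$ (equivalently to $N_A\simeq 0$; recall $\cN_\cI=\{N\in\KKK^G:\Res^G_X N\simeq 0\}$). In short, it suffices to prove that \emph{every} $G$-C$^*$-algebra belongs to the localizing subcategory generated by the induced algebras $\Ind^G_X B$, $B\in\KKK^X$.

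I would reduce this to proper algebras using the hypothesis. By Theorem~\ref{thm:tu} there are a proper $G$-space $Z$ with open surjective anchor map $Z\to X$ and a $G\ltimes Z$-algebra $P$, a continuous field of nuclear C$^*$-algebras over $Z$ with $P\simeq C_0(X)$ in $\KKK^G$; hence $A\simeq A\otimes_{C_0(X)}P=:D$, a $G\ltimes Z$-algebra. So it is enough to show that every $G\ltimes Z$-algebra $D$, that is, every continuous field over $Z$, lies in $\langle\Ind^G_X\KKK^X\rangle$.

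The decisive point is that under our hypotheses the $G$-action on $Z$ is \emph{free}: properness makes $G\ltimes Z$ a proper groupoid, so each stabilizer $G_z$, $z\in Z$, is compact; as $G$ is étale, $G_z$ is a subgroup of the discrete isotropy group $G^{\pi(z)}_{\pi(z)}$, hence finite; being torsion-free, it is trivial. I would then invoke a slice argument for free proper actions of étale groupoids: $Z$ carries a $G$-invariant open cover $\{W_i\}_{i\ge 1}$, countable because $Z$ is $\sigma$-compact, with each $W_i$ $G$-equivariantly homeomorphic to an induced space $G\times_X S_i$ (with $G$ acting by left translation on the $G$-coordinate), where the slice $S_i\subseteq Z$, together with the map $\pi|_{S_i}\colon S_i\to X$, is an object of $\KKK^X$. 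Using the $G$-action on $D$ to trivialize in the $G$-direction, the restriction $D_{W_i}=D\,C_0(W_i)$ is identified with the induction $\Ind^G_X(D_{S_i})$ of the field over the slice, so $D_{W_i}\in\Ind^G_X\KKK^X$; moreover every $G$-invariant open subset of $G\times_X S$ has the form $G\times_X S'$ for an open $S'\subseteq S$, so intersections $W_i\cap W_j$ are again of this kind and $D_{W_i\cap W_j}\in\Ind^G_X\KKK^X$ as well. One then assembles: the Mayer--Vietoris exact triangles $D_{U\cap V}\to D_U\oplus D_V\to D_{U\cup V}\to\Sigma D_{U\cap V}$ for $G$-invariant opens $U,V\subseteq Z$, induction over finite subcovers, and the mapping-telescope triangle exhibiting $D=\varinjlim_n D_{W_1\cup\cdots\cup W_n}$ as a countable homotopy colimit, together show $D\in\langle\Ind^G_X\KKK^X\rangle$, since this subcategory is closed under suspensions, exact triangles and countable direct sums.

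The formal skeleton — the two reductions, to proper and then to free algebras — is immediate from Proposition~\ref{prop:simplicial-res-from-adj-fs} and Theorem~\ref{thm:tu}, and the freeness of the action is elementary. I expect the real work to be in the last step: producing the slice decomposition of the free proper $G$-space $Z$ (which is noncompact and of a priori unbounded dimension, and whose anchor map need not be a local homeomorphism), checking that the restrictions $D_{W_i}$ really are inductions $\Ind^G_X(D_{S_i})$ at the level of $C_0(X)$-algebras and their $G$-actions, and verifying that the Mayer--Vietoris triangles and the countable homotopy colimit behave as claimed in $\KKK^G$.
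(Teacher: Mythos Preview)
Your overall strategy coincides with the paper's: reduce via the complementary pair $(\langle\Ind^G_X\KKK^X\rangle,\cN_\cI)$ to showing $A\in\langle\Ind^G_X\KKK^X\rangle$, invoke Tu's theorem to pass to an algebra over a proper $G$-space $Z$, observe that torsion-freeness makes the action on $Z$ free so that $Z$ is covered by induced slices $W_i\simeq G\times_X S_i$, and then assemble. The slice step is exactly the paper's Lemma~\ref{lem:prodr}, and your identification $D_{W_i}\simeq\Ind^G_X D_{S_i}$ is the same as the paper's.

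Where you diverge is in the assembly step, and there is a genuine gap. You invoke Mayer--Vietoris triangles $D_{U\cap V}\to D_U\oplus D_V\to D_{U\cup V}\to\Sigma D_{U\cap V}$ in $\KKK^G$ for arbitrary $D=A\otimes_{C_0(X)}P$. But an extension of $G$-C$^*$-algebras yields an exact triangle in $\KKK^G$ only when it admits a $C_0(X)$-linear completely positive section (see the end of Appendix~\ref{sec:app-triagl-str-equiv-KK}); for the open--closed decompositions underlying your MV triangles this is not automatic when $A$ is non-nuclear. This is precisely why the paper proceeds differently: it first reduces to $A=C_0(X)$ by observing that the triangulated endofunctor $A\otimes_{C_0(X)}(-)$ carries $L^nC_0(X)$ to $L^nA$, so that the algebra to be decomposed is Tu's $C_0(Z)$-nuclear $P$ itself; and even then, rather than relying on semisplitness, it runs a second adjunction (between $\KKK^{G\ltimes Z}$ and $\KKK^{G\ltimes\tilde Z}$ for $\tilde Z=\coprod_i W_i$) to produce a \v{C}ech-type triangle $P'\to P\to N\to\Sigma P'$, and then proves $N\simeq 0$ via Lemma~\ref{lem:excision-in-rep-KK}, which passes to equivariant $E$-theory to get excision for \emph{all} extensions. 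The $\KKK^{Z/G}$-nuclearity of $P$ is what allows the comparison $\KKK\simeq\rE$ there.

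If you first perform the reduction to $A=C_0(X)$, your MV-plus-telescope route can be salvaged: the relevant quotients are then nuclear, so Choi--Effros provides cp sections (with $C_0(X)$-linearity still to be arranged). This would give a somewhat more hands-on alternative to the paper's second adjunction and $E$-theory lemma, at the cost of those extra verifications. As written, though, your argument for general $A$ does not go through.
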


The following lemma clarifies the local picture of proper actions. 

\begin{lemm}
\label{lem:prodr}
Let $G$ be an étale groupoid with torsion-free stabilizers, and $G \curvearrowright Z$ a proper action on a locally compact Hausdorff space with the anchor map $\rho\colon Z \to X$.
Then each $z\in Z$ has an open neighborhood $U$ satisfying:
\begin{itemize}
\item $U$ has a compact closure in $Z$;
\item the saturation $G U$ can be identified as $G \times_X U$ as a $G$-space. 
\end{itemize}
\end{lemm}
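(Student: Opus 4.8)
The plan is to exploit the standard local structure of proper étale groupoid actions at the level of stabilizers. Fix $z \in Z$ and let $x = \rho(z)$. Since the action is proper, the stabilizer $G_x^x = \{g \in G \mid s(g) = r(g) = x\}$ acts on the fibre, and by properness combined with étaleness this stabilizer is a discrete group acting properly on $Z$; in particular, it is finite whenever it stabilizes a point. But here we are assuming torsion-free stabilizers, so $G_x^x$ is a torsion-free finite group, hence trivial. This is the crucial input: \emph{the action has trivial stabilizers at every point of $Z$}.

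First I would record this reduction precisely: for all $z \in Z$, the isotropy group $\{g \in G \mid s(g) = \rho(z),\ gz = z\}$ is trivial. Then I would invoke the slice theorem for proper actions of étale (more generally, proper Lie) groupoids with trivial isotropy: around $z$ there is an open neighbourhood $V$ of $z$ in $Z$ such that the map $G \times_X V \to Z$, $(g, v) \mapsto g v$, restricted to a neighbourhood of $(\rho(z), z)$, is a homeomorphism onto an open set, and in fact one can arrange that $G \times_X V \to GV$ is an isomorphism of $G$-spaces. Concretely: choose a compact open bisection-type neighbourhood structure using that $s, r$ are local homeomorphisms — for a small enough open $W \ni x$ in $X$, $s$ and $r$ restrict to homeomorphisms on the relevant slices of $G$ — and use properness to shrink $V$ so that no two distinct $g, g'$ with $s(g) = s(g') = \rho(v)$ satisfy $gv = g'v$ for $v \in V$ (this is where properness and the triviality of stabilizers combine, after possibly shrinking to deal with the closed-graph condition of the proper map $G \times_X Z \to Z \times Z$). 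Finally, shrink $V$ once more to an open set $U$ with $\overline{U}$ compact in $Z$, which is possible since $Z$ is locally compact Hausdorff; the identification $GU \cong G \times_X U$ persists under passing to the smaller set.

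The main obstacle I anticipate is the middle step: verifying that after shrinking $V$, the natural $G$-equivariant continuous surjection $G \times_X U \to GU$ is injective, i.e.\ that $gv = g'v'$ with $s(g) = \rho(v)$, $s(g') = \rho(v')$ forces $v = v'$ and $g = g'$. Injectivity on the second coordinate ($v = v'$) follows once $U$ is small enough that $GU \to Z$ separates the $U$-fibres, which is a consequence of the properness of $G \curvearrowright Z$ (the preimage of the diagonal under $G \times_X Z \to Z \times Z$ is closed, and near $(\rho(z), z)$ it meets only the identity-slice because the stabilizer is trivial); injectivity on the first coordinate ($g = g'$) then reduces to triviality of stabilizers together with the local homeomorphism property of $s$. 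I would also need to check that $G \times_X U \to GU$ is open — but this is automatic since it is a quotient map by a free proper action and the source is $G \times_X U$ with its product-fibre topology, on which $G$ acts freely, so the map to the orbit space $GU$ is the orbit projection, which is open. Once injectivity and openness are in hand, continuity of the inverse follows from properness, giving the desired homeomorphism $GU \cong G \times_X U$ as $G$-spaces.
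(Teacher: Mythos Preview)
Your plan is essentially the paper's proof: both first observe that every stabilizer in $Z$ is trivial (it is a compact---hence finite, since $G$ is étale---subgroup of the torsion-free group $G_{\rho(z)}^{\rho(z)}$), then use that the action map $\phi\colon G\ltimes Z \to Z\times Z$ is proper, hence closed, to produce a small $U$ on which $G\times_X U \to GU$ is a bijection, and finally upgrade to a homeomorphism. One place to sharpen: your openness argument via ``orbit projection'' is not quite right ($GU$ is the saturation in $Z$, not an orbit space of $G\times_X U$); the clean reason, used in the paper, is that $G\ltimes Z$ is étale, so the range map $(g,u)\mapsto gu$ is a local homeomorphism and in particular open.
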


\begin{proof}
This is essentially contained in Proposition 2.42 of the extended version of \cite{tu:propnonhaus}, but let us give a proof.
First, observe that any $w \in Z$ has trivial stabilizer.
Indeed, on the one hand it can be identified with the inverse image of $(w, w)$ for the action map $\phi\colon G \ltimes Z \to Z \times_X Z$, hence is a compact set by the properness of the action.
On the other hand, it is a subgroup of the stabilizer of $\rho(w)$, which is a torsion-free group, hence it must be trivial.

Next, fix an open neighborhood $V$ of $z$, and put $C = (G \ltimes Z) \setminus V$, where $V $ is identified with an open subset of $G \ltimes Z$ by taking the identity morphisms of $v \in V$.
Since $Z$ is locally compact Hausdorff, $\phi$ is closed (with compact fibers) and in particular $\phi(C)$ is closed in $Z \times_X Z$, and it does not contain $(z, z)$ by the above observation.

Take an open neighborhood $U$ of $z$ such that $U \times_X U$ does not meet $\phi(C)$.
Then the restriction of the action map to $G \times_X U$ is a bijection onto $G U$.
Indeed, if $(g, u)$ and $(g', u')$ had the same image in $G U$, we would have
\[
(u, u') \in U \times_X U \cap \phi(G \ltimes Z) \subset  \phi(V),
\]
which implies $u = u'$ and then $g = g'$.

Finally, as $G \ltimes Z$ is an étale groupoid, the action map $G \times_X U \to Z$ is an open map.
Then we obtain that the bijective continuous map $G \times_X U \to G U$ is a homeomorphism.
\end{proof}

For the next proof we use the \emph{equivariant $E$-theory} of $C_0(Y)$-algebras \cite{MR1789948}.
The equivariant $E$-groups $\rE^Y(A, B)$ (denoted by $\RE(Y; A, B)$ in \cite{MR1789948}) define a triangulated category with countable direct sums and a triangulated functor $\KKK^Y \to \rE^Y$ compatible with countable direct sums.

We are going to use the notion of $\RKK(X)$-nuclearity as defined by Bauval \cite{bauval:nuc}*{Definition 5.1} (see also \cite{skand:knuc}).
Here, we call it $\KKK^X$-nuclearity.
Namely, a $C_0(X)$-algebra $A$ is $\KKK^X$-nuclear if $\id_A \in \KKK^X(A, A)$ is represented by an $X$-$A$-$A$-Kasparov cycle $(\pi, \cE, T)$ such that the left action $\pi\colon A \to \cL(\cE)$ is \emph{strictly $C_0(X)$-nuclear} with respect to the identification $\cL(\cE) = \cM(\cK(\cE))$.

\begin{lemm}
\label{lem:excision-in-rep-KK}
Let $Y$ be a second countable locally compact space, and $(V_k)_{k=0}^\infty$ be a countable and locally finite open covering of $Y$.
If $A$ is a $\KKK^Y$-nuclear $C_0(Y)$-algebra, and if $N$ is a $C_0(Y)$-algebra such that $N_{V_k}$ is $\KKK^{V_k}$-equivalent to $0$ for all $k$, then we have $\KKK^Y(A, N) = 0$.
\end{lemm}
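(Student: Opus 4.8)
The plan is to pass to the equivariant $E$-theory category $\rE^Y$, in which \emph{every} extension of $C_0(Y)$-algebras yields an exact triangle \cite{MR1789948}, and there to prove the stronger statement that $N\simeq 0$; the $\KKK^Y$-nuclearity of $A$ is exactly what lets us transport this back to $\KKK^Y$. More precisely, $\KKK^Y$-nuclearity of $A$ makes the canonical map $\KKK^Y(A,B)\to\rE^Y(A,B)$ an isomorphism for every $C_0(Y)$-algebra $B$ (the fibered version of Skandalis' comparison theorem for nuclear algebras; see \cite{bauval:nuc} and \cite{MR1789948}), so it suffices to show $N\simeq 0$ in $\rE^Y$ — a statement in which $A$ no longer appears.

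Two routine functorialities will be used repeatedly. For an open inclusion $V\hookrightarrow Y$ there is an additive ``extension-by-zero'' functor $\KKK^{V}\to\KKK^Y$ sending a $C_0(V)$-algebra to itself, regarded as a $C_0(Y)$-algebra supported on $V$; and there is the restriction functor $B\mapsto B_{V}=BC_0(V)$ from $\KKK^{V'}$ to $\KKK^{V}$ for nested open sets $V\subseteq V'$ (and from $\KKK^Y$). Since $(N_{V_k})_W=N_W$ for $W\subseteq V_k$, the hypothesis $N_{V_k}\simeq 0$ in $\KKK^{V_k}$ yields $N_W\simeq 0$ in $\KKK^{W}$, hence in $\KKK^Y$ and in $\rE^Y$, for every open $W$ contained in a member of the cover.

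Next I would show, by induction on $m$, that $N_W\simeq 0$ in $\rE^Y$ whenever $W=V_{k_1}\cup\dots\cup V_{k_m}$ is a finite union of members of the cover, the case $m\le1$ being the previous step. For the inductive step, write $W=W'\cup V_k$ with $W'$ a union of $m-1$ of the sets. Since $W'$ is open in $W$ and $W\setminus W'=V_k\setminus W'$ is closed in $V_k$, there are extensions of $C_0(Y)$-algebras
\[
0\to N_{W'}\to N_W\to N_{W\setminus W'}\to0,\qquad 0\to N_{V_k\cap W'}\to N_{V_k}\to N_{W\setminus W'}\to0,
\]
each yielding an exact triangle in $\rE^Y$. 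In the second, $N_{V_k}$ and $N_{V_k\cap W'}$ are $\simeq 0$ by the previous step, so $N_{W\setminus W'}\simeq 0$; in the first, $N_{W'}\simeq 0$ by induction, so $N_W\simeq 0$. Finally I would exhaust $Y$ by $U_n=V_0\cup\dots\cup V_n$: then $N=\varinjlim_n N_{U_n}$ as a C$^*$-algebra, so $N\simeq\hocolim_n N_{U_n}$ in $\KKK^Y$, and the same holds in $\rE^Y$ because $\KKK^Y\to\rE^Y$ is exact and compatible with countable direct sums. Each $N_{U_n}\simeq 0$ by the induction, so $\bigoplus_n N_{U_n}\simeq 0$ and the defining triangle of the homotopy colimit forces $N\simeq 0$ in $\rE^Y$; hence $\KKK^Y(A,N)\cong\rE^Y(A,N)=0$.

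The induction itself is purely formal; the two points needing care are (i) the passage $\KKK^Y\rightleftarrows\rE^Y$, which is precisely what $\KKK^Y$-nuclearity of $A$ is for — it is unavoidable because the extensions appearing above are in general not semisplit, so $\KKK^Y$ provides no exact triangle for them — and (ii) the identification of the C$^*$-algebraic inductive limit with the triangulated homotopy colimit, a standard mapping-telescope fact (cf.\ \cite{meyernest:tri}). I expect (i) to be the main obstacle: once one accepts that one may work in $\rE^Y$ and come back, everything else is bookkeeping with extensions. Local finiteness of the cover is not strictly needed for this argument, but it is the hypothesis under which such covers naturally arise (from paracompactness of $Y$), and it would also permit a more conceptual proof through a \v{C}ech-type spectral sequence converging to $\rE^Y(A,N)$, all of whose $E^1$-terms $\rE^Y(A,N_{V_{k_0}\cap\dots\cap V_{k_p}})$ vanish.
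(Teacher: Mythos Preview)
Your proof is correct and follows essentially the same strategy as the paper's: pass to $\rE^Y$ via $\KKK^Y$-nuclearity of $A$, use the excision/exact-triangle property of $\rE^Y$ for extensions to run an induction over finite unions of the $V_k$, and finish with the homotopy colimit (admissible inductive system) triangle for $N=\varinjlim N_{U_n}$. Your organization is marginally more direct---you show $N_W\simeq 0$ in $\rE^Y$ rather than $\rE^Y(N_k,N)=0$, and you handle the locally closed piece $N_{W\setminus W'}$ via a second extension instead of invoking restriction to a locally closed set---but the content is the same; your observation that local finiteness is not actually used in the argument is also correct.
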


\begin{proof}
By assumption on $A$, we have $\KKK^Y(A, N) \simeq \rE^Y(A, N)$ \cite{MR1789948}*{Theorem 4.7}.
In order to show the latter group vanishes, it is enough to show $\rE^Y(N, N) = 0$.

Put $N_k = N_{V_0 \cup \cdots \cup V_k}$.
We first claim that $\rE^Y(N_k, N) = 0$ for all $k$.
By induction, it is enough to prove this for $k = 1$.
We have an extension of $C_0(Y)$-algebras
\[
0 \to N_0 \to N_1 \to N_{V_1 \cup V_0 \setminus V_0} \to 0.
\]
By assumption $N_0$ is contractible in $\KKK^Y$ (hence in $\rE^Y$).
We also have the contractibility of $N_{V_1 \cup V_0 \setminus V_0}$, as it is a reduction of the $\KKK^{V_1}$-contractible object $N_{V_1}$ to $V_1 \cup V_0 \setminus V_0 = V_1 \setminus V_0$.
Now, the functor $B \mapsto \rE^Y(B, N)$ satisfies excision \cite{MR1789948}*{Theorem 4.17}, which gives an exact sequence of the form
\[
0 = \rE^Y(N_{V_1 \cup V_0 \setminus V_0}, N) \to \rE^Y(N_1, N) \to \rE^Y(N_0, N) = 0,
\]
and we obtain $\rE^Y(N_1, N) = 0$.

The inclusion maps make $(N_k)_{k=0}^\infty$ an inductive system, and $N$ is its inductive limit as a $C_0(Y)$-algebra.
This inductive system is \emph{admissible} in the sense of \cite{nestmeyer:loc}*{Section 2.4} (this condition is automatic for inductive systems in $\rE^Y$, but this example is already admissible in $\KKK^Y$).
In particular, there is an exact triangle of the form
\[
\Sigma N \to \bigoplus_k N_k \to \bigoplus_k N_k \to N.
\]
Since we already have $\rE^Y(\bigoplus_k N_k, N) \simeq \prod_k \rE^Y(N_k, N) = 0$, we obtain $\rE^Y(N, N) = 0$.
\end{proof}

\begin{proof}[Proof of Theorem \ref{thm:KK-induction-from-base}]
Let $A$ be a separable $G$-C$^*$algebra.
The endofunctor $A \otimes_{C_0(X)} -$ on $\KKK^G$ is triangulated, and sends $C_0(X)$ to $A$.
More generally, we have an isomorphism of $G$-C$^*$-algebras between
\[
A \otimes_{C_0(X)} ((\Ind^G_X \Res^G_X)^n C_0(X)) \simeq A \otimes_{C_0(X)}^r C_0(G^{(n)})
\]
and $C_0(G^{(n)}) \tens{s}{C_0(X)} A \simeq (\Ind^G_X \Res^G_X)^n A$ by inductively applying the structure map of $G$-C$^*$-algebra.
Therefore it is sufficient to prove the statement for $A = C_0(X)$.

Let us first show a slightly weaker statement, namely that $C_0(X)$ belongs to the (localizing) triangulated subcategory of $\KKK^G$ generated by objects of the form $\Ind^G_X B$ for $C_0(X)$-algebras $B$. 

By Theorem \ref{thm:tu}, we may replace $C_0(X)$ by a $C_0(Z)$-nuclear $G \ltimes Z$-C$^*$-algebra $C$ for some proper $G$-space $Z$. Let $U \subset Z$ be an open set satisfying the conditions of Lemma \ref{lem:prodr}, and put $V = G U$.
Then the $G$-algebra $C_V = C_0(V) C$ is isomorphic to $\Ind^G_X C_U$.
Indeed, the latter is $C_0(G) \otimes_{C_0(X)} C_U$, and the $G$-equivariant isomorphism $V \simeq G U$ induces $C_V \simeq C_0(G) \otimes_{C_0(X)} C_U$.

Now, take countably many open sets $(U_i)_{i \in I}$ satisfying the conditions of Lemma \ref{lem:prodr}, such that the sets $V_i = G U_i$ cover $Z$ and $(V_i / G)_i$ is a countable and locally finite open cover of $Z/G$ (this is possible by second countability).
We want to say that the (unreduced) ``Čech complex'' of objects $C_{V_{i_1} \cap \cdots \cap V_{i_k}}$ give a resolution of $C$ in $\KKK^{G \ltimes Z}$.
Then, combined with the ``induction functor'' $\KKK^{G \ltimes Z} \to \KKK^G$ (which is really given by the restriction of $C_0(Z)$-algebras to $C_0(X)$-algebras), we get that $C$ is indeed in $\langle \Ind^G_X \KKK^X \rangle$. Suppose $U$ and $U'$ are open sets of $Z$ satisfying the conditions of Lemma \ref{lem:prodr}, and put $V = G U$ and $V' = G U'$.
Then there is an open set $W$ satisfying  the conditions of Lemma \ref{lem:prodr} with $V \cap V' = G W$; indeed, we can take $W = U \cap V'$.
This implies that the $G$-algebras $C_{V_{i_1} \cap \cdots \cap V_{i_k}}$ are all of the form $\Ind^G_X B$.

Now, set $\tilde Z = \coprod_i V_i$, and regard it as a $G \ltimes Z$-space by the canonical equivariant map $\tilde Z \to Z$.
The functors $\Ind^Z_{\tilde Z} \colon \KKK^{G \ltimes \tilde Z} \to \KKK^{G \ltimes Z}$ and $\Res^Z_{\tilde Z} \colon \KKK^{G \ltimes Z} \to \KKK^{G \ltimes \tilde Z}$ make sense.
Concretely, if $B$ is a $G$-equivariant $C_0(Z)$-algebra, we have
\[
\Res^Z_{\tilde Z} B = \bigoplus_i B_{V_i}
\]
endowed with an obvious action of $G$, while for a $G$-equivariant $C_0(\tilde Z)$-algebra $B$, we set $\Ind^Z_{\tilde Z} B$ to be the same C$^*$-algebra as $B$ regarded as a $C_0(Z)$-algebra.
Then we have the standard adjunction
\[
\KKK^{G  \ltimes Z}(\Ind^Z_{\tilde Z}B, B') \simeq \prod_i \KKK^{G \ltimes V_i}(B_{V_i}, B'_{V_i}) \simeq \KKK^{G \ltimes \tilde Z}(B, \Res^Z_{\tilde Z} B').
\]
From this, we see that $L = \Ind^Z_{\tilde Z} \Res^Z_{\tilde Z}$ satisfies
\[
L^k C = \bigoplus_{i_1, \ldots, i_k} C_{V_{i_1} \cap \cdots \cap V_{i_k}}.
\]
By Proposition \ref{prop:simplicial-res-from-adj-fs}, we obtain an exact triangle
\[
P \to C \to N \to \Sigma P
\]
in $\KKK^{G \ltimes Z}$, such that $P$ is in the localizing subcategory generated by objects of the form $\Ind^{G \ltimes Z}_{U_i} B$, and $N \in \ker \Res^{G \ltimes Z}_{G \ltimes \tilde Z}$.

It remains to prove that $N = 0$ in $\KKK^{G \ltimes Z}$. For this it is enough to show that the morphism $C \to N$ in the above triangle is zero. Indeed, $P$ will then be a direct sum of $C$ and $\Sigma N$, but there is no nonzero morphism from $P$ to $\Sigma N$. Since the action of $G$ on $Z$ is free and proper, there is an equivalence of categories between $\KKK^{G \ltimes Z}$ and $\KKK^{Z/G}$, and similar statements hold for the $G$-invariant open sets $V_i$. Under this correspondence, $C$ corresponds to a $\KKK^{Z/G}$-nuclear algebra. Now, Lemma \ref{lem:excision-in-rep-KK} implies that $\KKK^{G \ltimes Z}(C, N) = 0$. We thus know that $C_0(X) \simeq_{\KKK^G} C$ belongs to the localizing subcategory generated by objects of the form $\Ind^G_X B$.

Now, take a distinguished triangle
\[
P' \to C_0(X) \to N' \to \Sigma P'
\]
in $\KKK^G$ corresponding to the complementary pair $(\langle \Ind^G_X \KKK^X \rangle, \cN_{\ker \Res^G_X})$.
On the one hand, since $C_0(X)$ belongs to $\langle \Ind^G_X \KKK^X \rangle$, the morphism $C_0(X) \to N'$ is trivial and we have $P' \simeq C_0(X)$.
On the other hand, since the objects $(\Ind^G_X \Res^G_X)^{n+1}(C_0(X))$ form a $(\ker \Res^G_X)$-projective resolution of $C_0(X)$, the object $P'$ belongs to the triangulated subcategory generated by them.
This proves the assertion.
\end{proof}

\begin{coro}\label{cor:obv}
Let $G$ and $A$ be as in Theorem {\normalfont\ref{thm:KK-induction-from-base}}.
Let $P_X(A)\in \langle \Ind^G_X\KKK^X \rangle$ be the algebra appearing in the exact triangle 
\[
P_X(A) \to A \to N \to \Sigma P_X(A)
\]
that we get by applying Proposition {\normalfont\ref{prop:simplicial-res-from-adj-fs}}. Then we have $P_X(A)\simeq A$ in $\KKK^G$.
Equivalently, we have $\cN_{\ker \Res^G_X} = 0$.
\end{coro}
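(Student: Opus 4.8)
The plan is to deduce this as a formal consequence of Theorem~\ref{thm:KK-induction-from-base} together with the uniqueness of complementary decompositions. First I would recall the setup: applying Proposition~\ref{prop:simplicial-res-from-adj-fs} with $E = \Ind^G_X$ and $F = \Res^G_X$ shows that the pair $(\langle \Ind^G_X \KKK^X \rangle, \cN_{\ker \Res^G_X})$ is complementary, and the object $P_X(A)$ in the statement is exactly the $\langle \Ind^G_X \KKK^X \rangle$-part $P_A$ of the associated decomposition triangle $P_X(A) \to A \to N \to \Sigma P_X(A)$, with $N = N_A \in \cN_{\ker \Res^G_X}$.

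The key input is that $A$ itself lies in $\langle \Ind^G_X \KKK^X \rangle$. Indeed, each generator $(\Ind^G_X \Res^G_X)^n A$ with $n \ge 1$ can be written as $\Ind^G_X\bigl(\Res^G_X (\Ind^G_X \Res^G_X)^{n-1} A\bigr)$, hence lies in the image of $\Ind^G_X$ and so in $\langle \Ind^G_X \KKK^X \rangle$; Theorem~\ref{thm:KK-induction-from-base} then places $A$ in the localizing subcategory generated by these objects, i.e.\ in $\langle \Ind^G_X \KKK^X \rangle$.

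The conclusion is now immediate. Since $A \in \langle \Ind^G_X \KKK^X \rangle$, the exact triangle $A \xrightarrow{\id} A \to 0 \to \Sigma A$ is a complementary decomposition of $A$ (with $A$ in the left-hand member and $0 \in \cN_{\ker \Res^G_X}$). By the uniqueness up to isomorphism of such decompositions recorded after Definition~\ref{def:compl-pair}, it is isomorphic to $P_X(A) \to A \to N \to \Sigma P_X(A)$, so $P_X(A) \simeq A$ and $N \simeq 0$. (Alternatively, complementarity gives $\KKK^G(A, N) = 0$, so the map $A \to N$ vanishes, the triangle splits as $P_X(A) \simeq A \oplus \Sigma^{-1} N$, and then $\Sigma^{-1}N$ is a retract of $P_X(A) \in \langle \Ind^G_X \KKK^X \rangle$ that also lies in $\cN_{\ker \Res^G_X}$; as these two subcategories intersect trivially, $N \simeq 0$.)

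For the equivalent reformulation: granting $P_X(B) \simeq B$ for all $B$, take $B \in \cN_{\ker \Res^G_X}$; its complementary decomposition is $0 \to B \xrightarrow{\id} B \to 0$, so $P_X(B) = 0$ and hence $B \simeq 0$, i.e.\ $\cN_{\ker \Res^G_X} = 0$; conversely, if $\cN_{\ker \Res^G_X} = 0$ then $N_A \simeq 0$ for every $A$, so $P_X(A) \simeq A$. I do not expect a genuine obstacle here: all the real work sits in Theorem~\ref{thm:KK-induction-from-base}, and what remains is triangulated-category bookkeeping. The only point requiring care is to invoke the uniqueness and functoriality of the complementary decomposition (or, equivalently, the splitting lemma for a triangle with a zero edge together with the thickness of $\langle \Ind^G_X \KKK^X \rangle$), rather than merely the vanishing of a single Hom-group.
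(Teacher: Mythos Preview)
Your argument is correct and matches the paper's approach: the paper gives no separate proof for this corollary, treating it as immediate from Theorem~\ref{thm:KK-induction-from-base}, and the reasoning you spell out (that $A \in \langle \Ind^G_X \KKK^X \rangle$ forces the morphism $A \to N$ to vanish, hence $P_X(A) \simeq A$) is exactly what appears at the end of the proof of that theorem for the special case $A = C_0(X)$. Your additional verification of the equivalence with $\cN_{\ker \Res^G_X} = 0$ is a nice complement that the paper leaves implicit.
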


\begin{coro}\label{cor:specseq}
Let $G$ and $A$ be as in Theorem {\normalfont\ref{thm:KK-induction-from-base}}.
Then we have a convergent spectral sequence
\begin{equation}\label{eq:spseqH}
E^2_{pq}=H_p(K_q(G \ltimes L^{\bullet+1} A)) \Rightarrow K_{p+q}(G \ltimes A),
\end{equation}
where $L^nA= (\Ind_X^G\Res^G_X)^n(A)$.
\end{coro}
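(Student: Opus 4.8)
The plan is to obtain the spectral sequence as a direct instance of the ABC spectral sequence of Theorem~\ref{thm:spseqtri}, applied to the \emph{homological ideal} $\cI = \ker\bigl(\Res^G_X\colon \KKK^G \to \KKK^X\bigr)$ on $\cT = \KKK^G$ and to the homological functor $K = K_0(G \ltimes \mhyph)\colon \KKK^G \to \Ab$.

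First I would check that the hypotheses of Theorem~\ref{thm:spseqtri} hold. The ideal $\cI$ is a homological ideal because it is the kernel of an exact functor, and it is compatible with countable direct sums since $\Res^G_X$ commutes with them. By the adjunction $\KKK^X(B, \Res^G_X A) \simeq \KKK^G(\Ind^G_X B, A)$ recalled after Proposition~\ref{prop:simplicial-res-from-adj-fs}, the ideal $\cI$ has enough projectives; moreover that proposition exhibits the simplicial object $(L^{n+1}A)_{n=0}^\infty$, with $L = \Ind^G_X \Res^G_X$, whose associated Moore complex is an explicit $\cI$-projective resolution $P_\bullet$ of $A$ with $P_n = L^{n+1}A$. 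On the functor side, $K_0(G\ltimes\mhyph)$ is homological (the reduced crossed product preserves mapping cones, as noted in Section~\ref{sec:cploc}, and $K$-theory is homological) and compatible with countable direct sums; if desired one may replace its target by the stabilized abelian category of Remark~\ref{rem:target-of-homological-functor} without affecting anything. Since $G\ltimes C_0(\R,B)\simeq C_0(\R,G\ltimes B)$, with $K_q(A)=K(\Sigma^{-q}A)$ as in Theorem~\ref{thm:spseqtri} we have $K_q(A)=K_q(G\ltimes A)$.

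Theorem~\ref{thm:spseqtri} now yields a convergent spectral sequence with $E^2_{pq} = \bL^\cI_p K_q(A)$ abutting to $\bL^{\cN_\cI} K_{p+q}(A)$. Since the derived functor $\bL^\cI_p K_q$ may be computed from any $\cI$-projective resolution, using the resolution $P_\bullet$ above gives
\[
E^2_{pq} = \bL^\cI_p K_q(A) = H_p\bigl(K_q(G\ltimes L^{\bullet+1}A)\bigr),
\]
which is exactly the stated second page. It remains to identify the abutment. By definition $\bL^{\cN_\cI} K = K\circ P_X$, where $P_X(A)$ is the entry of the exact triangle $P_X(A)\to A\to N\to\Sigma P_X(A)$ attached to the complementary pair $(\langle \Ind^G_X\KKK^X\rangle, \cN_{\ker\Res^G_X})$. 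Under the torsion-free stabilizer and strong Baum--Connes hypotheses, Corollary~\ref{cor:obv} gives $P_X(A)\simeq A$ in $\KKK^G$ (equivalently $\cN_{\ker\Res^G_X}=0$), hence $\bL^{\cN_\cI}K_{p+q}(A)\simeq K_{p+q}(G\ltimes A)$, which completes the argument.

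The only nontrivial input beyond assembling the machinery is the vanishing $\cN_{\ker\Res^G_X}=0$, i.e.\ Corollary~\ref{cor:obv}, which itself rests on Theorem~\ref{thm:KK-induction-from-base}; without the torsion-freeness or Baum--Connes assumptions the same spectral sequence would only converge to $K_{p+q}(G\ltimes P_X(A))$. I expect the main point to state carefully is the naturality ensuring that the explicit resolution of Proposition~\ref{prop:simplicial-res-from-adj-fs} legitimately computes $\bL^\cI_p K_q$ and that the convergence assertion of Theorem~\ref{thm:spseqtri} transfers as stated; everything else is a substitution into the framework of Sections~\ref{sec:prelim} and~\ref{sec:approx-equivar-KK}.
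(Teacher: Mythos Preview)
Your proposal is correct and follows essentially the same approach as the paper's own proof: apply Theorem~\ref{thm:spseqtri} to the homological functor $K_0(G\ltimes\mhyph)$ and the ideal $\ker\Res^G_X$, use the explicit $\cI$-projective resolution $P_n=L^{n+1}A$ from Proposition~\ref{prop:simplicial-res-from-adj-fs} to compute the $E^2$-page, and invoke Corollary~\ref{cor:obv} to identify the abutment with $K_{p+q}(G\ltimes A)$. Your write-up is in fact more careful than the paper's in verifying the standing hypotheses (enough projectives, compatibility with direct sums, homologicality of the functor), but the logical skeleton is identical.
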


\begin{proof}
The reduced crossed product functor
\[
\KKK^G \to \KKK, \quad A \mapsto G \ltimes A
\]
is exact and compatible with direct sums, while
\[
\KKK \to \Ab, \quad B \mapsto K_0(B)
\]
is a homological functor. Thus, their composition
\[
K_0(G \ltimes \mhyph) \colon \KKK^G \to \Ab
\]
is a homological functor, cf.~\cite{meyernest:tri}*{Examples 13 and 15}.
Now we can apply Theorem \ref{thm:spseqtri} to get a spectral sequence
\[
H_p(K_q(G \ltimes P_\bullet)) \Rightarrow K_{p+q}(G \ltimes P_X(A)),
\]
where $P_\bullet$ is a $(\ker \Res^G_X)$-projective resolution of $A$.
The $(\ker \Res^G_X)$-projective resolution from Proposition \ref{prop:simplicial-res-from-adj-fs} gives the left hand side of \eqref{eq:spseqH}. Now the claim follows from Corollary \ref{cor:obv}.
\end{proof}

\section{Homology and \texorpdfstring{$K$}{K}-theory}
\label{sec:homology-k-theory}

Let us assume that $G$ is ample. We are going to relate the results of the previous section to the complex of groupoid homology which was described in Section \ref{sec:equivar-sheaves-over-ample-grpd}. First observe that $L^n A = C_0(G^{(n)}) \otimes A$.

As for the coefficients of homology, for the algebra $C_0(X)$ we have $K_0(C_0(X)) \simeq C_c(X, \Z)$, which corresponds to the constant sheaf $\underline\Z$ on $X$. More generally, any $G$-C$^*$-algebra gives a $G$-sheaf on $X$.

\begin{prop}\label{prop:G-alg-K-grp-CcG-mod}
Let $A$ be a $G$-C$^*$-algebra.
Then $K_i(A)$ is a unitary $C_c(G,\Z)$-module.
\end{prop}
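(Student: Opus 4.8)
The plan is to construct the $C_c(G,\Z)$-module structure on $K_i(A)$ directly from the local picture of the $G$-action; an alternative is to identify $K_i(A)$ with the $G$-equivariant sheaf $U\mapsto K_i(A_U)$ and quote the correspondence between $G$-sheaves and unitary $C_c(G,\Z)$-modules, but writing the action down explicitly is just as quick. Recall that, $G$ being ample, $C_c(G,\Z)$ is spanned over $\Z$ by the indicators $1_U$ of compact open bisections $U\subseteq G$ (open subsets on which both $s$ and $r$ restrict to homeomorphisms onto compact open subsets of $X$), with convolution $1_U\ast 1_V=1_{UV}$; and, passing to the atoms of the Boolean algebra generated by finitely many such sets, every $f\in C_c(G,\Z)$ can be written $f=\sum_l n_l 1_{D_l}$ with $n_l\in\Z$ and the $D_l$ pairwise disjoint compact open bisections.

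First I would attach to each compact open bisection $U$, with $W=s(U)$ and $W'=r(U)$, a $*$-homomorphism $\tilde\alpha_U\colon A\to A$, the composite $A\xrightarrow{\pi_W}A_W\xrightarrow{\alpha_U}A_{W'}\xrightarrow{\iota_{W'}}A$, where $\pi_W$ is the quotient onto $A_W=1_WA$ (a $*$-homomorphism since $W$ is clopen, so $1_W\in C_0(X)$ is a central projection), $\alpha_U$ the isomorphism supplied by the $G$-action, and $\iota_{W'}$ the inclusion of the ideal $A_{W'}=1_{W'}A$. Applying $K$-theory gives $(\tilde\alpha_U)_*\in\operatorname{End}_\Z(K_i(A))$, and I set $1_U\cdot\xi=(\tilde\alpha_U)_*\xi$. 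The two identities that carry the argument are: (a) $\tilde\alpha_{UV}=\tilde\alpha_U\circ\tilde\alpha_V$ for compact open bisections $U,V$; and (b) if $U=\bigsqcup_l U_l$ is a finite disjoint union of compact open bisections, then $\tilde\alpha_U=\sum_l\tilde\alpha_{U_l}$ as maps $A\to A$ — the summands have pairwise orthogonal ranges, so the sum is again a $*$-homomorphism — hence $(\tilde\alpha_U)_*=\sum_l(\tilde\alpha_{U_l})_*$ by additivity of $K_*$ for orthogonal sums of $*$-homomorphisms.

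Granting (a) and (b), I would conclude as follows. Using (b) and the $\Z$-linear independence of indicators of disjoint nonempty sets, a common-refinement argument shows that $\sum_l n_l 1_{D_l}\mapsto\sum_l n_l(\tilde\alpha_{D_l})_*$ is independent of the representation, yielding a well-defined additive map $C_c(G,\Z)\to\operatorname{End}_\Z(K_i(A))$, which by (a) (again with (b)) is multiplicative for convolution; so $K_i(A)$ is a $C_c(G,\Z)$-module. For the factorization property, a compact open $W\subseteq X$ is an identity bisection and $\tilde\alpha_W=\iota_W\pi_W$ is the map $a\mapsto 1_Wa$. Since $X$ is $\sigma$-compact, locally compact Hausdorff and totally disconnected, it has an exhaustion $W_1\subseteq W_2\subseteq\cdots$ by compact open sets, so $A=\varinjlim_n A_{W_n}$ and hence $K_i(A)=\varinjlim_n K_i(A_{W_n})$ by continuity of $K$-theory. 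Thus any $\xi\in K_i(A)$ equals $\iota_{W_n*}\eta$ for some $n$ and some $\eta$, and then $1_{W_n}\cdot\xi=(\tilde\alpha_{W_n})_*\xi=\iota_{W_n*}\pi_{W_n*}\iota_{W_n*}\eta=\xi$ since $\pi_{W_n}\iota_{W_n}=\id_{A_{W_n}}$; therefore $C_c(G,\Z)K_i(A)=K_i(A)$.

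The main obstacle I expect is the bookkeeping in identity (a): $\alpha_U$ lives only on the ideal $A_{s(U)}$ whereas $\tilde\alpha_V$ takes values in $A_{r(V)}$, so one must track how the sources and ranges of $U$, $V$, $UV$ interlock before the composite of partial isomorphisms can be recognized as $\tilde\alpha_{UV}$. The cleanest route is to evaluate both sides at a point $x\in X$: each is nonzero precisely when the unique $h\in V$ with $s(h)=x$ and the unique $g\in U$ with $s(g)=r(h)$ both exist, and then both equal the action of $\alpha_{gh}$ on the fibre $A_x$, by the cocycle relation $\alpha_{gh}=\alpha_g\alpha_h$ and injectivity of $s,r$ on bisections. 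A secondary but essential point is that $\tilde\alpha_U$ is genuinely a $*$-homomorphism and that the ranges in (b) are orthogonal: both use total disconnectedness of $X$, which makes every compact open $W$ clopen and so provides honest ideal splittings $A=A_W\oplus A_{X\setminus W}$.
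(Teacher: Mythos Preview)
Your argument is correct. It is worth noting that the ``evaluate at a point $x$'' verification of identity~(a) tacitly uses that both $\tilde\alpha_U\circ\tilde\alpha_V$ and $\tilde\alpha_{UV}$ factor through the clopen ideal $A_{s(UV)}$, so that the fibrewise check really does determine the map; this is implicit in your discussion but could be made explicit.

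Your route, however, differs from the paper's. The paper does \emph{not} build the $C_c(G,\Z)$-action directly. Instead it first produces the $C_c(X,\Z)$-module structure via the exterior product $K_0(C_0(X))\otimes K_i(A)\to K_i(C_0(X)\otimes A)\to K_i(A)$, then separately endows the associated sheaf with a $G$-action using the local isomorphisms $A_{s(U)}\to A_{r(U)}$, and finally appeals (implicitly) to the equivalence between $G$-sheaves and unitary $C_c(G,\Z)$-modules recalled in Section~\ref{sec:equivar-sheaves-over-ample-grpd}. Your construction bypasses that correspondence at the cost of the bisection bookkeeping in (a) and (b); the alternative you mention in your first paragraph is in fact essentially what the paper does. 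For unitarity the paper gives a bare-hands argument with projections in $M_n(A^+)$, whereas you invoke continuity of $K$-theory under the inductive limit $A=\varinjlim A_{W_n}$, which is shorter and more conceptual.
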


\begin{proof}
We show that $K_i(A)$ is a unitary $C_c(X,\Z)$-module, and the associated sheaf is a $G$-sheaf.

The structure map of $C_0(X)$-algebra induces a $*$-homomorphism $C_0(X) \otimes A \to A$.
Combined with the canonical map $K_0(C_0(X)) \otimes K_i(A) \to K_i(C_0(X) \otimes A)$, we obtain a map $K_0(C_0(X)) \otimes K_i(A) \to K_i(A)$, hence a $C_c(X,\Z)$-module structure on $K_i(A)$.

Next let us check the unitarity of this module.
By total disconnectedness and second countability of $X$, we can take an increasing sequence of compact open sets $(U_k)_{k=1}^\infty$ in $X$ such that $\phi_k = \chi_{U_k}$ form an approximate unit of $C_0(X)$.
Replacing $A$ by its suspension if necessary, it is enough to check that, for any class $c \in K_0(A)$, there is $k$ such that $[\phi_k] \in K_0(C_0(X))$ satisfies $[\phi_k] c = c$.

By definition, $c$ is represented by a formal difference $[e] - [f]$ of projections $e, f \in M_n(A^+)$ such that $\pi([e]) = \pi([f])$ in $K_0(\bC)$, where $n$ is some integer, $A^+$ is the unitization of $A$, and $\pi\colon A^+ \to \bC$ is the canonical quotient map.
By conjugating by a unitary in $M_n(\bC)$, we can arrange $\pi(e) = \pi(f)$.
Then we can write the components of $e$ as $e_{ij} = \alpha_{ij} + e'_{ij}$ for $\alpha_{ij} \in \bC$ and $e'_{ij} \in A$, and those of $f$ as $f_{ij} = \alpha_{ij} + f'_{ij}$.

Now, put $x^{(k)}_{ij} = (1 - \phi_k) \alpha_{ij}$, $y^{(k)}_{ij} = \phi_k e_{ij}$, and $z^{(k)}_{ij} = \phi_k f_{ij}$.
These form projections $x^{(k)} \in M_n(A^+)$, $y^{(k)}, z^{(k)} \in M_n(A)$, such that $x^{(k)} + y^{(k)}$ and $x^{(k)} + z^{(k)}$ are still projections.
If $\phi_k e'_{ij}$ is close enough to $e'_{ij}$ in norm, $e$ is close to $x^{(k)} + y^{(k)}$ in norm, and we obtain $[e] = [x^{(k)} + y^{(k)}]$ in $K_0(A^+)$ for large enough $k$.
Similarly, we obtain $[f] = [x^{(k)} + z^{(k)}]$ for large enough $k$.
Then we have $c = [y^{(k)}] - [z^{(k)}]$, and for this $k$ we indeed have $[\phi_k] c = c$.

It remains to give an action of $G$ on the associated sheaf $F$.
Take $g \in G$, and choose its open compact neighborhood $U$ such that $s$ and $r$ restrict to homeomorphisms on $U$.
Then the action of $G$ induces an isomorphism $A_{s(U)} \to A_{r(U)}$.
In turn this induces $\chi_{s(U)} K_i(A) \to \chi_{r(U)} K_i(A)$, which can be interpreted as the action of $g$ from $\Gamma(s(U), F)$ to $\Gamma(r(U), F)$.
A routine bookkeeping shows that these maps patch up to give an action morphism $s^* F \to r^* F$ on $G$.
\end{proof}

\begin{prop}\label{prop:homhk}
When $A$ is a $G$-C$^*$-algebra, there is an isomorphism of chain complexes 
\[
(K_i(G \ltimes L^{\bullet + 1} A),\delta_\bullet) \simeq (C_c(G^{(\bullet)},\Z) \otimes_{C_c(X,\Z)} K_i(A)),\partial_\bullet)
\]
for $L = \Ind^G_X \Res^G_X$.
\end{prop}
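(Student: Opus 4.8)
The plan is to construct the claimed isomorphism one homological degree at a time, using two structural inputs, and then verify that it commutes with the differentials. Write $L^{n+1}A=\Ind^G_X\bigl(\Res^G_X(L^nA)\bigr)$ and recall $L^nA\cong C_0(G^{(n)})\tens{s}{C_0(X)}A$ (as noted above; compare the proof of Theorem~\ref{thm:KK-induction-from-base}), so that $\Res^G_X(L^nA)$ is, as a $C_0(X)$-algebra, $C_0(G^{(n)})\tens{s}{C_0(X)}A$. The first input is a stabilization (imprimitivity) theorem for étale groupoids: for any $C_0(X)$-algebra $B$, the crossed product $G\ltimes\Ind^G_X B$ is isomorphic to $\mathcal{K}(\mathcal{E})$ for a full Hilbert $B$-module $\mathcal{E}$ (one may take $\mathcal{E}=L^2(G)\otimes_{C_0(X)}B$), hence is Morita equivalent to $B$, naturally in $B$; this is the crossed-product form of the equivalence between the transformation groupoid $G\ltimes G$ and the unit groupoid $X$. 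Applied to $B=\Res^G_X(L^nA)$, it gives a natural isomorphism $K_i(G\ltimes L^{n+1}A)\cong K_i\bigl(C_0(G^{(n)})\tens{s}{C_0(X)}A\bigr)$.

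For the right-hand side, the second input computes $K$-theory along étale maps of totally disconnected spaces: if $p\colon Y\to X$ is a local homeomorphism and $Y$ is totally disconnected (locally compact, second countable), then there is a natural isomorphism
\[
K_i\bigl(C_0(Y)\tens{p}{C_0(X)}A\bigr)\;\cong\;C_c(Y,\Z)\otimes_{C_c(X,\Z)}K_i(A).
\]
I would prove this by exhausting $Y$ by compact open subsets $Y_1\subseteq Y_2\subseteq\cdots$ and partitioning each $Y_j$ into finitely many compact opens carried by $p$ homeomorphically onto compact (hence clopen) subsets $U\subseteq X$; over such a piece $C_0(Y)\tens{p}{C_0(X)}A$ restricts to $AC_0(U)$, whose $K$-theory is the direct summand $\chi_U\,K_i(A)$, which matches the corresponding direct summand of $C_c(Y,\Z)\otimes_{C_c(X,\Z)}K_i(A)$. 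One then passes to the limit using continuity of $K$-theory and the fact that $\mhyph\otimes_{C_c(X,\Z)}K_i(A)$ commutes with the colimit $C_c(Y,\Z)=\varinjlim_j C_c(Y_j,\Z)$. Taking $Y=G^{(n)}$, $p=s$, and composing with the first input, I obtain a natural isomorphism $\Phi_n\colon K_i(G\ltimes L^{n+1}A)\xrightarrow{\ \sim\ }C_c(G^{(n)},\Z)\otimes_{C_c(X,\Z)}K_i(A)$; for $n=0$ it reduces to the unitarity of $K_i(A)$ as a $C_c(X,\Z)$-module (Proposition~\ref{prop:G-alg-K-grp-CcG-mod}).

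It remains to check that $\Phi_\bullet$ is a chain map, i.e.\ $\Phi_{n-1}\circ\delta_n=\partial_n\circ\Phi_n$, where $\delta_n=\sum_{i=0}^n(-1)^iK_i(G\ltimes d^n_i)$ and $d^n_i=L^i(\epsilon_{L^{n-i}A})$, with $\epsilon$ the counit of $\Ind^G_X\dashv\Res^G_X$. For $i\ge1$ one has $d^n_i=\Ind^G_X\Res^G_X\bigl(L^{i-1}(\epsilon_{L^{n-i}A})\bigr)$, so by naturality of the stabilization isomorphism $\Phi_{n-1}\circ K_i(G\ltimes d^n_i)\circ\Phi_n^{-1}$ equals $\mathrm{id}_{C_c(G^{(i-1)},\Z)}\otimes K_i(\epsilon_{L^{n-i}A})$ under the identification $K_i(L^{i-1}(\mhyph))\cong C_c(G^{(i-1)},\Z)\otimes_{C_c(X,\Z)}K_i(\mhyph)$. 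The crucial observation is that the counit $\epsilon_B\colon\Ind^G_X\Res^G_X B\to B$ induces on $K$-theory exactly the $C_c(G,\Z)$-module action $C_c(G,\Z)\otimes_{C_c(X,\Z)}K_i(B)\to K_i(B)$ of Proposition~\ref{prop:G-alg-K-grp-CcG-mod}: on the associated $G$-sheaves the counit of $\Ind^G_X\dashv\Res^G_X$ is the module multiplication $C_c(G,\Z)\otimes_{C_c(X,\Z)}M\to M$. For $1\le i\le n-1$, taking $B=L^{n-i}A$ (whose $G$-action translates the leading $G$-variable) this action combines the $i$-th and $(i+1)$-st of the $n$ legs, i.e.\ gives pushforward along the nerve face map $(g_1,\dots,g_n)\mapsto(g_1,\dots,g_ig_{i+1},\dots,g_n)$; for $i=n$, taking $B=A$, it feeds the last $G$-leg into $K_i(A)$ through the $G$-action on $A$, i.e.\ gives $\alpha_n$. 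Finally, for $i=0$ the map $K_i(G\ltimes d^n_0)=K_i(G\ltimes\epsilon_{L^nA})$ becomes, via the stabilization isomorphism on its source, $K_i$ of the canonical inclusion $\Res^G_X(L^nA)\to G\ltimes L^nA$, and composed with $\Phi_{n-1}$ this is pushforward along $(g_1,\dots,g_n)\mapsto(g_2,\dots,g_n)$. Collecting the alternating contributions gives $\Phi_{n-1}\circ\delta_n=\partial_n\circ\Phi_n$.

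The main obstacle is this last step. The counit $\epsilon_B$ is a $\KKK^G$-class, not a $*$-homomorphism, so pinning down its effect on $K$-theory and tracking it --- together with the imprimitivity bimodule implementing the stabilization isomorphism --- through the functor $K_i(G\ltimes\mhyph)$, uniformly in $n$, so that the bar-resolution combinatorics matches that of the nerve and of the module action, is the delicate part; one must also be careful that all the identifications above are natural enough to commute past the filtered colimits. By contrast, the étale--Künneth computation, continuity of $K$-theory, and the imprimitivity theorem itself are standard inputs.
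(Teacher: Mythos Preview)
Your proposal is correct and follows essentially the same two-step strategy as the paper: first use the Morita equivalence $G\ltimes\Ind^G_X B\sim B$ (which the paper phrases as the groupoid equivalence $G\ltimes G^{(n+1)}\sim G^{(n)}$) together with the total disconnectedness of $G^{(n)}$ to obtain the pointwise isomorphism $K_i(G\ltimes L^{n+1}A)\cong C_c(G^{(n)},\Z)\otimes_{C_c(X,\Z)}K_i(A)$, and then match the simplicial structures. The paper dispatches the second step in a single sentence (``The comparison of simplicial structures is a routine calculation''), whereas you have actually unpacked what that routine calculation entails---identifying the effect of the counit $\epsilon_B$ on $K$-theory with the $C_c(G,\Z)$-module action and tracking the face maps through the imprimitivity bimodule---and correctly flagged this as the place where genuine care is needed.
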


\begin{proof}
From the equivalence of groupoids between $G \ltimes G^{(n+1)}$ and $G^{(n)}$ (where we consider $G^{(k)}$ as spaces), we have $K_i(G \ltimes L^{n+1} A) \simeq K_i(C_0(G^{(n)}) \tens{s}{C_0(X)} A)$.

Since $G^{(n)}$ is totally disconnected, we have
\begin{align*}
K_0(C_0(G^{(n)})) &\simeq C_c(G^{(n)},\Z),&
K_1(C_0(G^{(n)})) = 0.
\end{align*}
Thus, we have an isomorphism of unitary $C_c(G,\Z)$-modules
\[
K_i(C_0(G^{(n)}) \tens{s}{C_0(X)} A) \simeq C_c(G^{(n)},\Z) \otimes_{C_c(X,\Z)} K_i(A).
\]
The comparison of simplicial structures is a routine calculation.
\end{proof}

Thus, we obtain an isomorphism of homology groups
\[
H_p(K_q(G \ltimes L^{\bullet + 1} A),\delta_\bullet)\simeq H_p(G,K_q(A)).
\]

\begin{theo}\label{theo:hk-with-coeff}
Let $G$ be a second countable Hausdorff ample groupoid with torsion-free stabilizers satisfying the strong Baum--Connes conjecture, and $A$ be a separable $G$-C$^*$-algebra.
Then there is a convergent spectral sequence
\begin{equation}\label{eq:spec-sec-with-coeff}
E^r_{p q} \Rightarrow K_{p+q}(G \ltimes A),
\end{equation}
with $E^2_{pq}=H_p(G,K_q(A))$.
\end{theo}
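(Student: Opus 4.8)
The plan is to obtain the assertion by combining the spectral sequence already constructed in Corollary~\ref{cor:specseq} with the chain-level identification of Proposition~\ref{prop:homhk}. Since $G$ is ample it is in particular étale, and ``satisfying the strong Baum--Connes conjecture'' means precisely that the conclusion of Theorem~\ref{thm:tu} holds; together with torsion-freeness of the stabilizers, this puts us under the hypotheses of Theorem~\ref{thm:KK-induction-from-base}, hence of Corollary~\ref{cor:specseq}. That corollary already gives a convergent spectral sequence
\[
E^2_{pq} = H_p\bigl(K_q(G \ltimes L^{\bullet+1} A), \delta_\bullet\bigr) \Rightarrow K_{p+q}(G \ltimes A),
\]
with $L = \Ind^G_X \Res^G_X$. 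So the only thing left is to recognize the $E^2$-page.

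First, recall that by Proposition~\ref{prop:G-alg-K-grp-CcG-mod} the group $K_q(A)$ is a unitary $C_c(G,\Z)$-module, so that the groupoid homology $H_p(G, K_q(A))$ with coefficients in the corresponding $G$-sheaf is defined. Next, Proposition~\ref{prop:homhk} provides an isomorphism of chain complexes between $(K_q(G \ltimes L^{\bullet+1} A), \delta_\bullet)$ and the bar-type complex $(C_c(G^{(\bullet)}, \Z) \otimes_{C_c(X,\Z)} K_q(A), \partial_\bullet)$ computing $H_\bullet(G, K_q(A))$. Taking homology yields $E^2_{pq} = H_p(G, K_q(A))$ as required. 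As this is an isomorphism of chain complexes, it carries over to an isomorphism of the associated exact couples, so the abutment is unchanged and convergence is inherited from Corollary~\ref{cor:specseq}.

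The substantive content has all been established upstream: the complementarity of $(\langle \Ind^G_X \KKK^X\rangle, \cN_{\ker\Res^G_X})$ from Theorem~\ref{thm:pocs}, the collapse $\cN_{\ker\Res^G_X} = 0$ from Corollary~\ref{cor:obv} (which is where torsion-free stabilizers and strong Baum--Connes are actually used, via Theorem~\ref{thm:KK-induction-from-base}), the ABC spectral sequence of Theorem~\ref{thm:spseqtri}, and the module computation of Proposition~\ref{prop:G-alg-K-grp-CcG-mod}. So at this point no real obstacle remains; the one thing deserving care --- already handled in the proof of Proposition~\ref{prop:homhk} --- is that under the identifications $K_0(C_0(G^{(n)})) \simeq C_c(G^{(n)},\Z)$, $K_1(C_0(G^{(n)})) = 0$, and the groupoid equivalence $G \ltimes G^{(n+1)} \sim G^{(n)}$, the face maps $d^n_i$ of the comonad resolution are carried to the face maps of the nerve of $G$, so that the differentials $\delta_\bullet$ and $\partial_\bullet$ match. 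Specializing to $A = C_0(X)$ gives $E^2_{pq} = H_p(G, K_q(\bC))$, and since $K_q(\bC)$ vanishes for odd $q$ the differential $d^2$ is forced to vanish, so $E^2 = E^3$.
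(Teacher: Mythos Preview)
Your proposal is correct and follows essentially the same route as the paper: invoke Corollary~\ref{cor:specseq} for the convergent spectral sequence and Proposition~\ref{prop:homhk} (with Proposition~\ref{prop:G-alg-K-grp-CcG-mod} in the background) to identify the $E^2$-page. The final paragraph about the $A=C_0(X)$ specialization and $E^2=E^3$ actually belongs to Corollary~\ref{cor:hk} rather than to the theorem itself, but that is harmless.
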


\begin{proof}
We obtain the convergent spectral sequence by Corollary \ref{cor:specseq}, and Proposition \ref{prop:homhk} gives the description of the $E^2$-sheet.
\end{proof}

Specializing to the case $A = C_0(X)$, we obtain our main result.

\begin{coro}\label{cor:hk}
Let $G$ be as above.
Then there is a convergent spectral sequence
\begin{equation*}
E^r_{p q} \Rightarrow K_{p+q}(C^*_r(G)),
\end{equation*}
with $E^2_{pq} = E^3_{pq} =H_p(G,K_q(\bC))$.
\end{coro}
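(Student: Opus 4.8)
The plan is to deduce this directly from Theorem \ref{theo:hk-with-coeff} applied to the $G$-C$^*$-algebra $A = C_0(X)$, which is separable since $X$ is second countable. First I would observe that $G \ltimes C_0(X) = C^*_r(G)$ by definition of the reduced groupoid C$^*$-algebra, so the spectral sequence of Theorem \ref{theo:hk-with-coeff} already converges to $K_{p+q}(C^*_r(G))$; what remains is to compute the $E^2$-page and to establish the extra collapse $E^2 = E^3$.

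For the $E^2$-page, I would recall, as noted just before Proposition \ref{prop:G-alg-K-grp-CcG-mod}, that $K_0(C_0(X)) \simeq C_c(X,\Z)$ corresponds to the constant sheaf $\underline\Z$ on $X$, while $K_1(C_0(X)) = 0$ by total disconnectedness of $X$. Moreover, the $G$-equivariant structure on this sheaf, induced by the canonical translation action on $C_0(X)$ as in the proof of Proposition \ref{prop:G-alg-K-grp-CcG-mod}, is the tautological action of $G$ on the constant sheaf. Hence the chain complex $(C_c(G^{(\bullet)},\Z) \otimes_{C_c(X,\Z)} K_q(C_0(X)))$ computing $H_p(G, K_q(C_0(X)))$ coincides with the complex $(C_c(G^{(\bullet)}, K_q(\bC)), \partial_\bullet)$ of Section \ref{sec:equivar-sheaves-over-ample-grpd}, using $K_q(\bC) = \Z$ for $q$ even and $0$ for $q$ odd. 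This identifies $E^2_{pq} = H_p(G, K_q(\bC))$.

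Finally, for the degeneracy, the key point is that $K_q(\bC)$ vanishes for $q$ odd, so $E^2_{pq} = 0$ whenever $q$ is odd. The differential $d^2 \colon E^2_{pq} \to E^2_{p-2,q+1}$ shifts the second index by one and hence changes its parity, so either its source (when $q$ is odd) or its target (when $q$ is even) is zero; in every case $d^2 = 0$, and therefore $E^3_{pq} = E^2_{pq}$. I do not expect a genuine obstacle in this argument — the only step demanding a little care is checking that the $G$-sheaf attached to $K_0(C_0(X))$ really is the constant sheaf with its natural $G$-action, so that the abstract group $H_p(G, K_0(C_0(X)))$ agrees with the concrete groupoid homology $H_p(G,\Z)$ defined via $(C_c(G^{(\bullet)},\Z), \partial_\bullet)$; this comes out by unwinding the construction in the proof of Proposition \ref{prop:G-alg-K-grp-CcG-mod}. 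Note that no claim is made that $d^3$ vanishes: for odd $r$ the shift $q \mapsto q + r - 1$ preserves parity, so such differentials need not be zero.
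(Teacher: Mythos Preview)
Your proposal is correct and follows essentially the same approach as the paper: apply Theorem \ref{theo:hk-with-coeff} with $A = C_0(X)$, identify the $E^2$-page via the constant sheaf, and observe that $d^2$ vanishes for parity reasons since $K_q(\bC)=0$ for odd $q$. The paper's own proof is terser, taking the identification $H_p(G,K_q(C_0(X)))\simeq H_p(G,K_q(\bC))$ as implicit from the sentence preceding the corollary, and only spelling out the degree argument for $E^2=E^3$; your extra care in tracing the $G$-sheaf structure on $K_0(C_0(X))$ is a helpful elaboration but not a different method.
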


\begin{proof}
As $K_q(\bC) = 0$ for odd $q$, by degree reasons the $E^2$-differential is trivial.
This implies $E^2_{pq} = E^3_{pq}$.
\end{proof}

\begin{rema}
Looking at the bidegree of differentials at the $E^3$-sheet, we see that the above spectral sequence collapses at the $E^2$-sheet if $H_k(G, \Z)$ vanishes for $k \ge 3$.
If, in addition, $H_2(G, \Z)$ is torsion-free, one has
\begin{align}\label{eq:col}
K_0(C^*_r G) &\simeq H_0(G, \Z) \oplus H_2(G, \Z),&
K_1(C^*_r G) &\simeq H_1(G, \Z).
\end{align}
This covers the transformation groupoids of minimal $\Z$-actions on the Cantor space considered in \cite{matui:hk} and the Deaconu--Renault groupoids of rank $1$ and $2$ (in particular $k$-graph groupoids for $k=1,2$) in \cite{simsfarsi:hk}, and groupoids of $1$-dimensional generalized solenoids \cite{yi:hkonesol}.
The Exel--Pardo groupoid model \cite{MR3581326} for Katsura's realization \cite{MR2400990} of Kirchberg algebras also belong to this class \cite{ortega:kep}.
For the groupoid of tiling spaces (see Section \ref{sec:subst-tiling}) one can do slightly better; if $G$ is a groupoid associated with some tiling in $\R^d$, one has the vanishing of $H_k(G, \Z)$ for $k > d$ and $H_d(G, \Z)$ is torsion-free.
Comparing the rank of $H_\bullet(G, \Z)$ and $K_\bullet(C^*_r G)$, we see that the higher differentials are always zero on $H_d(G, \Z)$, and the spectral sequence collapses if $d \le 3$.
\end{rema}

\begin{rema}
For the transformation groupoids $\Gamma \ltimes X$ where $X= E \Gamma$ is a ``nice'' manifold (such as carrying an invariant Riemannian metric with nonpositive sectional curvature), \cite{kas:descent} gives a spectral sequence analogous to \eqref{eq:spec-sec-with-coeff}.
\end{rema}

\begin{rema}
In Theorem \ref{theo:hk-with-coeff}, without assuming that $G$ has torsion-free stabilizers, or that it satisfies the strong Baum--Connes conjecture, we still have a convergent spectral sequence
\[
E^2_{pq} = H_p(G,K_q(A)) \Rightarrow K_{p+q}(G \ltimes P_A)
\]
where $P_A$ is a $(\ker \Res^G_X)$-simplicial approximation of $A$, see Section \ref{sec:non-example} for an example.
\end{rema}

\subsection{Semidirect product by torsion-free groups}
\label{sec:cr-prod-tors-free-grp}

Suppose that a group $\Gamma$ acts on a (second countable locally compact Hausdorff) groupoid $G$.
Then we can form a semidirect product $\Gamma \ltimes G$: its object set is the same as that of $G$, its arrow set is the direct product $\Gamma \times G$, with structure maps $s(\gamma, g) = s(g)$, $r(\gamma, g) = \gamma r(g)$, and composition rule $(\gamma, g) (\gamma', g') = (\gamma \gamma', \gamma'^{-1}(g) g')$.
We then have a following analogue of the permanence property of the strong Baum--Connes conjecture for extension of torsion free discrete groups \cite{MR1817505}.

\begin{prop}\label{prop:tfs}
Suppose that $\Gamma$ is torsion-free and satisfies the strong Baum--Connes conjecture, and that $G$ is an ample groupoid with torsion-free stabilizers satisfying the strong Baum--Connes conjecture.
Then any separable $\Gamma \ltimes G$-C$^*$-algebra $A$ belongs to the localizing subcategory generated by the image of $\Ind^{\Gamma \ltimes G}_X \colon \KKK^X \to \KKK^{\Gamma \ltimes G}$.
\end{prop}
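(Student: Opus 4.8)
The plan is to show that the semidirect-product groupoid $H := \Gamma \ltimes G$ itself satisfies the hypotheses of Theorem \ref{thm:KK-induction-from-base}, and then to apply that theorem (equivalently, Corollary \ref{cor:obv}), noting that $\Ind^{\Gamma \ltimes G}_X = \Ind^H_X$. First observe that $H$ is again a second countable locally compact Hausdorff étale groupoid with unit space $X$: its arrow space $\Gamma \times G$ carries the product topology with $\Gamma$ discrete, its source map restricts to $s_G$ on each clopen slice $\{\gamma\} \times G$, and its range map restricts there to $s_G$ followed by the homeomorphism of $X$ given by $\gamma$, so both are local homeomorphisms.

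Next I would check that $H$ has torsion-free stabilizers. For $x \in X$, sending $(\gamma, g) \mapsto \gamma$ defines a group homomorphism $H_x^x \to \Gamma$ whose kernel consists of the pairs $(e, g)$ with $g \in G_x^x$, hence is isomorphic to $G_x^x$; the latter is torsion-free by hypothesis on $G$, and the image is a subgroup of $\Gamma$, hence torsion-free. Since an extension of a torsion-free group by a torsion-free group is torsion-free, $H_x^x$ is torsion-free.

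The main point is to verify that $H$ satisfies the strong Baum--Connes conjecture, i.e.\ the conclusion of Theorem \ref{thm:tu}, which I would do by combining the data for $\Gamma$ and for $G$. Let $(Z_\Gamma, P_\Gamma)$ be the data of Theorem \ref{thm:tu} for the group $\Gamma$: so $Z_\Gamma$ is a proper $\Gamma$-space (free, since $\Gamma$ is torsion-free) and $P_\Gamma$ is a $\Gamma \ltimes Z_\Gamma$-C$^*$-algebra, a continuous field of nuclear C$^*$-algebras over $Z_\Gamma$, with $P_\Gamma \simeq \bC$ in $\KKK^\Gamma$; and let $(Z_G, P_G)$ be the corresponding data for $G$. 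Choosing $Z_G$ to be a functorial model of $\E G$ (which one may, by the remark following Theorem \ref{thm:tu}), the action of $\Gamma$ on $G$ lifts to a compatible action on $(Z_G, P_G)$, turning these into $H$-equivariant data. Then $Z := Z_G \times Z_\Gamma$, with anchor map $(z, w) \mapsto \rho_{Z_G}(z)$ to $X$ and $H$-action $(\gamma, g) \cdot (z, w) = (\gamma(g z), \gamma w)$, is a proper $H$-space whose anchor map $Z \to X$ is open and surjective, and $P := P_G \otimes_{C_0(X)} P_\Gamma$ (with $P_\Gamma$ regarded over $X$ via $C_0(X) \otimes P_\Gamma$, on which $G$ acts trivially) is an $H \ltimes Z$-C$^*$-algebra which is a continuous field of nuclear C$^*$-algebras over $Z$ and satisfies $P \simeq C_0(X)$ in $\KKK^H$, because the $\KKK^G$-equivalence $P_G \simeq C_0(X)$ (now $\Gamma$-, hence $H$-equivariant) and the $\KKK^\Gamma$-equivalence $P_\Gamma \simeq \bC$ tensor together over $C_0(X)$ --- harmless by nuclearity --- to a $\KKK^H$-equivalence. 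This is the groupoid analogue of the permanence of the strong Baum--Connes conjecture under extensions of torsion-free groups \cite{MR1817505}.

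With these three facts in hand, Theorem \ref{thm:KK-induction-from-base} applies to $H$: every separable $H$-C$^*$-algebra $A$ lies in the localizing subcategory generated by the objects $(\Ind^H_X \Res^H_X)^n A$ for $n \ge 1$, and in particular in $\langle \Ind^H_X \KKK^X \rangle$ (equivalently, $\cN_{\ker \Res^H_X} = 0$), which is the assertion. The step I expect to be the main obstacle is the passage from Theorem \ref{thm:tu} for $G$ to a $\Gamma$-equivariant version: one must produce the proper $H$-space $Z_G$ together with the algebra $P_G$ carrying a compatible $\Gamma$-action, which needs either a sufficiently functorial model of $\E G$ or a direct construction, and is precisely the place where the argument imitates the permanence proof for group extensions. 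The remaining ingredients --- the étale and torsion-free verifications, the tensor-product bookkeeping, and the final appeal to the results of Section \ref{sec:approx-equivar-KK} --- are routine.
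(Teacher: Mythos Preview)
Your strategy differs from the paper's and has a genuine gap at exactly the point you flag as ``the main obstacle.'' The hypothesis on $G$ is only that \emph{some} proper $G$-algebra $P_G$ exists with $P_G \simeq C_0(X)$ in $\KKK^G$; nothing guarantees that $P_G$, or the $\KKK^G$-equivalence, can be chosen $\Gamma$-equivariantly. The remark you invoke lets one replace the proper space $Z_G$ by a model of $\E G$, but says nothing about the algebra $P_G$ or the Dirac morphism $P_G \to C_0(X)$. Upgrading a $\KKK^G$-equivalence to a $\KKK^{\Gamma \ltimes G}$-equivalence is exactly the substantive content you would need, and it does not follow from functoriality of classifying spaces; indeed, even in the group case Oyono-Oyono's permanence theorem \cite{MR1817505} is not proved by exhibiting equivariant Dirac--dual-Dirac data for the normal subgroup. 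So as written your argument reduces the proposition to an unproved (and nontrivial) lemma.

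The paper's proof sidesteps this entirely by never asking whether $H = \Gamma \ltimes G$ satisfies strong Baum--Connes. It works categorically in two stages. First, the triangulated functor $\KKK^\Gamma \to \KKK^H$, $B \mapsto B \otimes A$, sends $\bC$ to $A$, and strong BC for $\Gamma$ (via Theorem \ref{thm:KK-induction-from-base} applied to the group $\Gamma$) places $A$ in the localizing subcategory generated by $C_0(\Gamma) \otimes B' \otimes A$; a Fell-absorption argument identifies $C_0(\Gamma) \otimes A$ with $\Ind^H_G \Res^H_G A$. Second, Theorem \ref{thm:KK-induction-from-base} is applied to $G$ itself, \emph{inside $\KKK^G$}, to place $\Res^H_G A$ in $\langle \Ind^G_X \KKK^X \rangle$; then the triangulated functor $\Ind^H_G \colon \KKK^G \to \KKK^H$ carries this into $\langle \Ind^H_G \Ind^G_X \KKK^X \rangle = \langle \Ind^H_X \KKK^X \rangle$. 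The point is that the strong BC data for $G$ is used only in $\KKK^G$, where it lives, and is transported to $\KKK^H$ afterwards by induction --- so no $\Gamma$-equivariance of that data is ever required.
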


\begin{proof}
Let us fix $A$ as in the assertion.
First consider the functor
\[
F\colon \KKK^\Gamma \to \KKK^{\Gamma \ltimes G}, \quad B \mapsto B \otimes A,
\]
where $\Gamma$ acts on $B \otimes A$ diagonally and $G$ acts on the leg of $A$.
This is a triangulated functor compatible with countable direct sums.

By assumption on $\Gamma$, the trivial $\Gamma$-C$^*$-algebra $\bC$ belongs to the localizing subcategory generated by objects of the form $C_0(\Gamma) \otimes B'$ for separable C$^*$-algebras $B'$.
Thus, $A = F(\bC)$ belongs to the localizing subcategory generated by the $C_0(\Gamma) \otimes B' \otimes A$.

Now, we claim that the $\Gamma \ltimes G$-C$^*$-algebra $C_0(\Gamma) \otimes A$ is isomorphic to $\Ind^{\Gamma \ltimes G}_G \Res^{\Gamma \ltimes G}_G A$, by an analogue of Fell's absorption principle.
Both algebras can be interpreted as the direct sum of copies of $A$ indexed by the elements of $\Gamma$.
For $C_0(\Gamma) \otimes A$, the action of $G$ becomes component-wise action on this direct sum, while the action of $\Gamma$ is the combination of translation on indexes and component-wise action.
For $\Ind^{\Gamma \ltimes G}_G \Res^{\Gamma \ltimes G}_G A$, the action of $G$ preserves direct summands, but twisted by the effect of $\gamma$ on $G$ on the $\gamma$-th component. The action of $\Gamma$ simply becomes translation of indexes.
We can move from one presentation to another by applying $\gamma$ or $\gamma^{-1}$ on the $\gamma$-th component.

We thus have $A$ in the localizing subcategory generated by $\Ind^{\Gamma \ltimes G}_G \Res^{\Gamma \ltimes G}_G A \otimes B'$ for separable C$^*$-algebras $B'$.
By Theorem \ref{thm:KK-induction-from-base}, $\Res^{\Gamma \ltimes G}_G A \in \KKK^G$ belongs to the localizing subcategory generated by the image of $\Ind^G_X\colon \KKK^X \to \KKK^G$.
Combined with natural isomorphism $\Ind^{\Gamma \ltimes G}_G \Ind^G_X \simeq \Ind^{\Gamma \ltimes G}_X$, we obtain the assertion (note that the C$^*$-algebras $B'$ above receive trivial action).
\end{proof}

Consequently, if $G$ is moreover ample, the conclusion of Theorem \ref{theo:hk-with-coeff} holds for $\Gamma \ltimes G$.

\begin{rema}[added after publication]
Suppose $A$ is a $\Gamma \ltimes G$-C$^*$-algebra.
We can take an $(\ker \Res^G_X)$-projective resolution $P_\bullet$ of $A$ in $\KKK^G$, such that $K_i(G \ltimes P_\bullet)$ becomes a complex of $\Gamma$-modules mapping to the $\Gamma$-module $K_i(G \ltimes A)$.
The $E^2$-sheet of the associated spectral sequence converging to $K_\bullet(G \ltimes A)$ given by Theorem \ref{theo:hk-with-coeff} is isomorphic to the hyperhomology groups $\mathbb{H}_p(\Gamma, K_q(G \ltimes P_\bullet))$, see \cite{arXiv:2104.10938}*{Section 5.1}.
\end{rema}

\section{Examples}
\label{sec:examples}

\subsection{Deaconu--Renault groupoids}

Let us sketch what one gets for the Deaconu--Renault groupoids \citelist{\cite{deaconu:endo}\cite{exren:semi}}, which reduces to the Kasparov spectral sequence for a $\Z^n$-action, see \cite{simsfarsi:hk}.

Let $X$ be a second countable totally disconnected locally compact Hausdorff space, and $\sigma$ be an action of the semigroup $\N^k$ on $X$ by surjective local homeomorphisms.
The associated \emph{Deaconu--Renault groupoid} $G = G(X, \sigma)$ is defined by
\[
G = \{(x, a-b, y) \in X \times \Z^k \times X \mid a, b \in \N^k, \sigma^a(x) = \sigma^b(y)\},
\]
with base $G^{(0)} = X$, and range and source maps given by projection onto the first and third factors.

There is a natural cocycle $c\colon G \to \Z^k$ given by $c(x, n, y) = n$, and the resulting skew-product groupoid $G \times_c \Z^k$, with base $X \times \Z^k$, range map and source maps 
\[
r((x, m, y), n) = (x, n), \qquad s((x, m, y), n) = (y, m + n).
\]
This groupoid has trivial stabilizers and is AF in the sense of \cite{simsfarsi:hk}, and in particular is a union of subgroupoids which are Morita equivalent to the space $X$.
This gives
\begin{align*}
H_0(G \times_c \Z^k, \Z) &= \varinjlim_{a \in \N^k} C_c(X, \Z),&
H_n(G \times_c \Z^k, \Z) &= 0 \quad (n > 0),
\end{align*}
where the inductive limit is taken with respect to the iteration of induced map by $\sigma$.
Moreover, by considering the automorphisms $\alpha_a \colon ((x, m, y), n) \mapsto ((x, m, y), n+a)$ for $a \in \Z^k$, we obtain a semidirect product groupoid $\tilde{G}= \Z^k \ltimes_\alpha (G \times_c \Z^k)$, which is Morita equivalent  to $G$.

Then the Leray--Hochschild--Serre spectral sequence \cite{cramo:hom}*{Theorem 4.4} applied to the canonical groupoid homomorphism $\tilde G \to \Z$ degenerates at the $E^2$-sheet and gives an isomorphism
\[
H_n(\tilde G, \Z) \simeq H_n(\Z^k, H_0(G \times_c \Z^k, \Z)),
\]
with $\Z^k$ acting on the inductive limit $\varinjlim_{a \in \N^k} C_c(X, \Z)$ by shifting the index $a$.
In particular, we get a convergent spectral sequence of the form in Corollary \ref{cor:hk} with
\[
E^r_{pq} \Rightarrow K_{p+q}(C^*_r G), \quad E^2_{p,2s} = E^3_{p,2s} = H_p(\Z^k, \varinjlim_{a \in \N^k} C_c(X, \Z)) \otimes K_q(\bC), \quad E^2_{p,{2s+1}} = E^3_{p,2s} = 0
\]
as in \cite{simsfarsi:hk}.

\subsection{Substitution tiling}
\label{sec:subst-tiling}

We follow the convention of \cite{MR1798993}, and consider substitution tilings of finite local complexity.
Thus, we are given a finite set $P$ of \emph{prototiles} in $\R^d$ and a substitution rule $\omega$ for~$P$.
Under reasonable assumptions on $\omega$, the translation action $\tau$ of $\R^d$ on the associated hull space $\Omega$ is free and minimal.
Then, analogously to the case of solenoids, the groupoid of the unstable equivalence relation is the transformation groupoid $G = \R^d \ltimes_\tau \Omega$.
Moreover, by \cite{MR1971208}, there is a transversal $X \subset \Omega$ that is homeomorphic to a Cantor set, such that $(\R^d \ltimes_\tau \Omega)|_X$ is the transformation groupoid $\Z^d \ltimes_\alpha X$ for some action $\alpha\colon \Z^d \curvearrowright X$, see also \cite{MR2021006}*{Section 5}.

Let us quickly explain how a spectral sequence of more classical nature arises in this setting.
By Connes's Thom isomorphism, one has
\[
K_n(C^* G) \simeq K^{n+d}(\Omega).
\]
Now, $\Omega$ can be identified with a projective limit of a self-map of branched $d$-dimensional manifold obtained by gluing (collared) prototiles \cite{putand:til}.
This leads to the Atiyah--Hirzebruch spectral sequence
\begin{equation}
\label{eq:grpd-cohom-sp-seq-to-k-grp}
E_2^{p,q} = {\check H}^p(\Omega, K_q(\bC)) \Rightarrow K^{p+q}(\Omega),
\end{equation}
that is, $E_2^{p,q}$ is the $p$-th Čech cohomology of $\Omega$ with constant sheaf $\underline\Z$ when $q$ is even, and  $E_2^{p,q} = 0$ otherwise (for dimension reasons we also have $E_2^{p,q} = 0$ if $p > d$).
Since $\Omega$ is a compact Hausdorff space, this is also equal to the sheaf cohomology as derived functor.
Since the action $\tau$ is free and $\R^d$ is contractible, $\Omega$ is a model of the classifying space $B G$ and the universal principal bundle $E G$ for the groupoid $G$ (up to nonequivariant homotopy).
In particular, we can interpret the sheaf cohomology on $\Omega$ as groupoid cohomology of $G$, see \citelist{\cite{MR1607724}\cite{MR2231869}}.

Let us relate our construction to this.
Using the transversal $X$, we have
\begin{align*}
H^\bullet(G|_X, \Z) &\simeq H^\bullet(\Z^d, C(X, \Z)),&
H_\bullet(G|_X, \Z) &\simeq H_\bullet(\Z^d, C(X, \Z)),&
\end{align*}
where we consider $C(X, \Z)$ as a module over $\Z^d$ by the action induced by $\alpha$.
Moreover we have $H_k(\Z^d, M) \simeq H^{d-k}(\Z^d, M)$ for any $\Z^d$-module $M$, see for example \cite{MR1324339}*{Section VIII.10}.
This shows that
\[
H_k(G|_X, \Z) \simeq H^{d-k}(G|_X, \Z) \simeq H^{d-k}(G, \Z)
\]
for the étale groupoid $G|_X$, and the spectral sequence of Corollary \ref{cor:hk} is comparable to \eqref{eq:grpd-cohom-sp-seq-to-k-grp}.

\begin{rema}
A similar spectral sequence is given in \cite{MR2505326}, as an analogue of the Serre spectral sequence for the Anderson--Putnam fibration structure $\Omega \to \Gamma_k$ over the $k$-collared prototile space.
It would be an interesting question to compare these.
\end{rema}

\subsection{A non-example}
\label{sec:non-example}

Scarparo has found a counterexample to the HK conjecture \cite{eduardo:hk}.
In his example $G$ is the transformation groupoid of an action $\alpha$ of the infinite dihedral group $\Gamma = \Z_2 \ltimes \Z$ on the Cantor set $X$.
Thus, it is amenable and in particular satisfies the strong Baum--Connes conjecture.
However, the simplicial approximation $P(C(X))$ arising from restriction to the unit space is indeed not $\KKK^G$-equivalent to $C(X)$.
Let us explain the ingredients in more detail.

Let $(n_i)_{i=0}^\infty$ be a strictly increasing sequence of integers such that, for $i\geq 1$, $n_{i+1} / n_i \in \N$ for all~$i$.
We take the model $X=\varprojlim \Z_{n_i}$.
Then $\Z$ acts by the odometer action, i.e., $1 \in \Z$ acts by the $+1$ map on each factor $\Z_{n_i}$.
There is a consistent action of $\Z_2$, where the nontrivial element $g = [1] \in \Z_2$ acts by multiplication by $-1$, giving rise to an action $\alpha$ of $\Gamma$ on $X$.
Note that $\alpha$ is topologically free but not free, nor does it have torsion-free stabilizers.

Put $G = \Gamma \ltimes_\alpha X$, and
\[
M = \left\{\frac{m}{n_i}\mid m\in\Z,i\geq 1\right\}.
\]
The C$^*$-algebra $C^* G = \Gamma \ltimes_\alpha C(X)$ is an AF algebra, with
\[
K_0(C^* G) \simeq \begin{cases}
M\oplus \Z & \text{if $n_{i+1}/ n_i$ is even for infinitely many $i$,}\\
M \oplus \Z^2 & \text{otherwise,}
\end{cases}
\]
see \cite{MR1245825}.
On the other hand, the groupoid homology is
\begin{align*}
H_0(G,\Z) &\simeq M,\\
H_{2k}(G,\Z) &\simeq 0,\\
H_{2k-1}(G,\Z) &\simeq\begin{cases}
\Z_{2}& \text{if $n_{i+1}/ n_i$ is even for infinitely many $i$,}\\
\Z_{2}^2 & \text{otherwise,}
\end{cases}
\end{align*}
for $k > 1$, see \cite{eduardo:hk}.
This shows that groupoid homology cannot form a spectral sequence converging to $K_\bullet(C^* G)$, much less being isomorphic to it.

Fortunately, there is a somewhat concrete description of $P(C(X))$ in this case.
Consider the antipodal action of $\Z_2$ on $S^n$, that is, $g$ acts by the restriction of the multiplication by $-1$ on $\R^{n+1}$.
Then the contractible space $S^\infty = \varinjlim S^n$ is a model of the universal bundle $E \Z_2$.
We want to make sense of an analogue of Poincaré dual for this.

Let $Y_n = C_0(T^* S^n)$ denote the function algebra of the total space of the cotangent bundle of $S^n$, and $Y'_n$ denote the $\Z_2$-graded C$^*$-algebra of continuous sections of the C$^*$-algebra bundle $\Cliff(T^* S^n)$ over $S^n$ with complex Clifford algebras $\Cliff(T^*_x S^n)$ as fibers.
These admit naturally induced actions of $\Z_2$, and $Y_n$ is $\KKK^{\Z_2}$-equivalent to $Y_n'$ \cite{MR3597146}*{Theorem 2.7}.

Let us recall the (equivariant) Poincaré duality between $C(S^n)$ and $Y'_n$ \cite{kas:descent}*{Section 4}.
The natural Clifford module structure on the differential forms of $S^n$, together with $D'_n = d + d^*$, give an unbounded model of a $K$-homology class $[D'_n] \in K^0_{\Z_2}(Y'_n)$.
Composed with the product map $m \colon Y'_n \otimes C(S^n) \to Y'_n$, we obtain the class $[D_n] = m \otimes_{Y'_n} [D'_n] \in K^0_{\Z_2}(Y'_n \otimes C(S^n))$.
The dual class $[\Theta_n] \in K_0^{\Z_2}(C(S^n) \otimes Y'_n)$ is defined as a certain class localized around the diagonal.

Let $j\colon S^n \to S^{n+1}$ be the embedding at the equator (which is a $\Z_2$-equivariant continuous map), and let $j' \colon Y'_n \to Y'_{n+1}$ be the $\KKK^{\Z_2}$-morphism dual to the restriction map $j^*\colon C(S^{n+1}) \to C(S^n)$.
Thus, we have
\[
j' = [\Theta_{n+1}] \otimes_{C(S^{n+1}) \otimes Y_{n+1}} (\id_{Y'_n} \otimes j^* \otimes \id_{Y'_n}) \otimes_{Y_n \otimes\, C(S^n)} [D_n],
\]
see \cite{kas:descent}*{Theorem 4.10}.

\begin{lemm}
We have $j' \otimes_{Y'_{n+1}} [D'_{n+1}] = [D'_n]$ in $K^0_{\Z_2}(Y'_n)$.
\end{lemm}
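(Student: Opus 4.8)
The plan is to argue formally with Kasparov products and to reduce the identity to one of the zig-zag equations of Kasparov's equivariant Poincaré duality between $C(S^{n+1})$ and $Y'_{n+1}$, together with two elementary facts: $j^*\colon C(S^{n+1})\to C(S^n)$ is unital, and the product map $m\colon Y'_n\otimes C(S^n)\to Y'_n$ (and its analogue for $n+1$) is unital in its commutative variable. No geometry of the equatorial embedding beyond $\Z_2$-equivariance of $j$ will be needed.

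First I would substitute the defining formula for $j'$ into $j'\otimes_{Y'_{n+1}}[D'_{n+1}]$; after the identifications $Y_k\simeq Y'_k$ in $\KKK^{\Z_2}$ this formula reads $j'=[\Theta_{n+1}]\otimes_{C(S^{n+1})}j^*\otimes_{C(S^n)}[D_n]$. By associativity of the Kasparov product, $[D'_{n+1}]$ meets only the $Y'_{n+1}$-leg of $[\Theta_{n+1}]$, so the product equals $c'\otimes_{C(S^n)}[D_n]$, where $c':=j^*\bigl([\Theta_{n+1}]\otimes_{Y'_{n+1}}[D'_{n+1}]\bigr)\in K_0^{\Z_2}(C(S^n))$. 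Writing $[D_n]=m\otimes_{Y'_n}[D'_n]$ and using unitality of $m$ in the $C(S^n)$-slot, one sees that if $c'$ is the class $[1_{S^n}]$ of the unit then $c'\otimes_{C(S^n)}m=\id_{Y'_n}$ in $\KKK^{\Z_2}$, whence $c'\otimes_{C(S^n)}[D_n]=[D'_n]$. It thus remains to identify $c:=[\Theta_{n+1}]\otimes_{Y'_{n+1}}[D'_{n+1}]\in K_0^{\Z_2}(C(S^{n+1}))$ with $[1_{S^{n+1}}]$.

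For this step I would rewrite $[D'_{n+1}]$ as the restriction of the Poincaré-duality class $[D_{n+1}]=m\otimes_{Y'_{n+1}}[D'_{n+1}]$ along the unital inclusion $\bC\to C(S^{n+1})$, again by unitality of $m$ in the $C(S^{n+1})$-variable. Then $c$ is exactly what one gets by precomposing the composite $[\Theta_{n+1}]\otimes_{Y'_{n+1}}[D_{n+1}]\in\KKK^{\Z_2}(C(S^{n+1}),C(S^{n+1}))$ with that inclusion. Now Kasparov's Poincaré duality \cite{kas:descent}*{Section 4}, applicable since $Y_{n+1}\simeq Y'_{n+1}$ in $\KKK^{\Z_2}$, supplies the zig-zag identity $[\Theta_{n+1}]\otimes_{Y'_{n+1}}[D_{n+1}]=\id_{C(S^{n+1})}$. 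Hence $c=[1_{S^{n+1}}]$, so $c'=j^*[1_{S^{n+1}}]=[1_{S^n}]$, and the lemma follows.

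The only genuine work is bookkeeping: unwinding the compressed formula for $j'$ correctly, keeping track of all the tensor legs, and checking that the grading flips built into Kasparov's conventions for $[\Theta_{n+1}]$ and $[D_{n+1}]$ cancel, so that the zig-zag identity truly reads $+\id_{C(S^{n+1})}$ rather than a signed or twisted version. I expect this flip/sign verification — which is handled by staying throughout within the compatible conventions of \cite{kas:descent}*{Theorem 4.10}, where $j'$ and the duality classes are all introduced — to be the main and essentially only obstacle.
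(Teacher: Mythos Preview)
Your argument is correct and is essentially the same as the paper's, just unpacked: both rest on the zig-zag identity of equivariant Poincar\'e duality together with the unitality of $j^*$ and of $m$. The paper compresses all of your bookkeeping into the single observation that $[D'_k]$ is the Poincar\'e dual of the unit embedding $\eta_k\colon\bC\to C(S^k)$, so that the desired identity is equivalent to $j^*\circ\eta_{n+1}=\eta_n$ (written there as $\eta_{n+1}=j\,\eta_n$ on the level of spaces); your explicit manipulation of $[\Theta_{n+1}]\otimes_{Y'_{n+1}}[D_{n+1}]=\id_{C(S^{n+1})}$ is exactly what makes that one-line reduction work.
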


\begin{proof}
As a $\KKK^{\Z_2}$-morphism, $[D'_n]$ is the dual of the embedding $\eta_n \colon \bC \to C(S^n)$, hence the claim reduces to $\eta_{n+1} = j \eta_n$.
\end{proof}

Take the homotopy colimit $Y'_\infty = \varinjlim Y'_n$ in $\KKK^{\Z_2}$ (to be precise, we are working in the enlarged category of $\Z_2$-graded C$^*$-algebras).
By the above lemma, the morphisms $[D'_n]$ induce a morphism $[D'_\infty] \in \KKK^{\Z_2}(Y'_\infty, \bC)$.
Transporting this by the $\KKK^{\Z_2}$-equivalence, we obtain $Y_\infty = \varinjlim Y_n$ and $[D_\infty] \in \KKK^{\Z_2}(Y_\infty, \bC)$.

\begin{lemm}
The image of $[D_\infty]$ in $\KKK(Y_\infty, \bC)$ is a $\KKK$-equivalence.
\end{lemm}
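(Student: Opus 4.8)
The plan is to identify a mapping cone of $[D_\infty]$ with a homotopy colimit of mapping cones of the $[D_n]$, and then to observe that this homotopy colimit is trivial because its structure maps lie in zero groups. Recall that $[D_\infty]$ is a $\KKK$-equivalence if and only if its mapping cone is isomorphic to $0$ in $\KKK$.

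First I would set up the homotopy colimit. The previous Lemma gives $j' \otimes_{Y'_{n+1}} [D'_{n+1}] = [D'_n]$, which after transporting through the $\KKK^{\Z_2}$-equivalences $Y_n \simeq Y'_n$ and forgetting the $\Z_2$-action reads $[D_{n+1}] \circ j' = [D_n]$ in $\KKK$; this is the compatibility used to form $[D_\infty] \in \KKK(Y_\infty, \bC)$ out of the inductive system. Choosing fill-ins for the resulting morphisms of exact triangles $Y_n \xrightarrow{[D_n]} \bC \to C_{[D_n]} \to \Sigma Y_n$, one gets an inductive system $(C_{[D_n]})_n$ together with a degreewise exact triangle of inductive systems; its homotopy colimit (formed as a mapping telescope, using that $\KKK$ has countable direct sums) is again an exact triangle. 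Since the homotopy colimit of $(Y_n)_n$ is $Y_\infty$, that of the constant system $(\bC)_n$ is $\bC$, and the induced map on homotopy colimits is $[D_\infty]$, we conclude that $\hocolim_n C_{[D_n]}$ is a mapping cone of $[D_\infty]$.

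Next I would compute each $C_{[D_n]}$ up to $\KKK$-equivalence. Since $[D_n]$ is the Poincar\'e dual of the unit $\eta_n\colon \bC \to C(S^n)$ (see \cite{kas:descent}*{Section~4}) and $\eta_n$ is a split monomorphism (split by evaluation at a point), the mapping cone of $\eta_n$ is $C_0(S^n \setminus \{pt\}) \cong C_0(\R^n) \simeq_{\KKK} \Sigma^n \bC$; by naturality of Poincar\'e duality, which is an equivalence interchanging $\Sigma$ with $\Sigma^{-1}$, the cone $C_{[D_n]}$ is the dual of this cone, hence $\KKK$-equivalent to $\Sigma^{n+1}\bC$. (Alternatively one can argue directly: $[D_n]$ admits a $\KKK$-section, so it induces a split surjection $K_0(Y_n) \to K_0(\bC) = \Z$; feeding this, together with the computation $K_\bullet(Y_n) = \tilde K^{-\bullet}(\mathrm{Th}(T^*S^n))$ obtained from the $K$-orientation of $T^*S^n$, into the long exact sequence of the cone triangle gives $K_\bullet(C_{[D_n]}) \cong \Z$ concentrated in degree $n+1 \bmod 2$, and the universal coefficient theorem then identifies $C_{[D_n]} \simeq_{\KKK} \Sigma^{n+1}\bC$.)

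Finally, since $C_{[D_n]}$ and $C_{[D_{n+1}]}$ are suspensions of $\bC$ of opposite parity, $\KKK(C_{[D_n]}, C_{[D_{n+1}]}) \cong \KKK_1(\bC, \bC) = K_1(\bC) = 0$, so the structure maps of the inductive system $(C_{[D_n]})_n$ all vanish. Hence in the telescope triangle
\[
\bigoplus_n C_{[D_n]} \xrightarrow{\ \id - \mathrm{shift}\ } \bigoplus_n C_{[D_n]} \longrightarrow \hocolim_n C_{[D_n]} \longrightarrow \Sigma\bigoplus_n C_{[D_n]}
\]
the shift map is $0$, so $\id - \mathrm{shift}$ is invertible and its mapping cone $\hocolim_n C_{[D_n]}$ is $0$. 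Combined with the second paragraph, a mapping cone of $[D_\infty]$ vanishes, so $[D_\infty]$ is a $\KKK$-equivalence. I expect the step requiring the most care to be the identification in the second paragraph: that a mapping cone of $[D_\infty]$ is the homotopy colimit of the $C_{[D_n]}$ is a standard consequence of the interaction between mapping cones and sequential homotopy colimits in a triangulated category, but it relies on the compatibility furnished by the preceding Lemma and on choosing the structure maps of the inductive system of cones coherently.
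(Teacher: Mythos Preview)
Your argument is correct and takes a genuinely different route from the paper. The paper's proof identifies each $Y_n$ in $\KKK$ as $\bC^2$ or $\bC \oplus \Sigma\bC$, observes that $[D_n]$ is the projection onto the distinguished summand spanned by the $K$-theoretic fundamental class of $T^*S^n$, and then argues directly that the structure maps $j'$ preserve this summand and annihilate the complementary one, so that $Y_\infty \simeq \bC$ with $[D_\infty]$ realizing the equivalence. You instead pass to mapping cones and exploit the parity flip: since $C_{[D_n]} \simeq \Sigma^{n+1}\bC$, consecutive cones lie in opposite parities and the connecting maps are forced to vanish because $\KKK_1(\bC,\bC)=0$, whence the telescope of cones is zero.

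The trade-off is this: your approach avoids any explicit analysis of $j'$ (the step the paper states without detailed justification), at the cost of invoking the $3\times 3$/telescope machinery to identify the cone of $[D_\infty]$ with $\hocolim_n C_{[D_n]}$. The paper's approach is more elementary and yields slightly more, namely an explicit identification $Y_\infty \simeq \bC$, but it rests on knowing how $j'$ acts on the two summands. Your parity argument is a neat way to sidestep that computation; just note that the identification $\hocolim_n \bC \simeq \bC$ and the equality of the induced map with $[D_\infty]$ involve the usual non-canonical fill-ins in a triangulated category, so strictly speaking you are exhibiting \emph{a} model of $[D_\infty]$ with vanishing cone, which suffices for the statement.
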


\begin{proof}
In the nonequivariant $KK$-category, $Y_n$ is equivalent to $\bC^2$ or $\bC \oplus \Sigma \bC$ depending on the parity of $n$, and there is a distinguished summand which is equivalent to $\bC$ (at the even degree) spanned by the $K$-theoretic fundamental class of $T^* S^n$.
Moreover, the morphism corresponding to $[D'_n]$ is a projection onto this summand.

The $\KKK$-morphisms corresponding to $j'$ preserve the fundamental class while killing the other direct summand.
Thus, the limit is equivalent to $\bC$, spanned by the image of the fundamental classes, and $[D_\infty]$ gives the equivalence.
\end{proof}

Since $\Z_2$ acts freely on $T^* S^n$, each $Y_n$ is orthogonal to the kernel of restriction functor $\KKK^{\Z_2} \to \KKK$.
The discussion so far can be readily adjusted to the groupoid $G$, as follows.
Here, $Y_n \otimes C(X)$ is a $G$-C$^*$-algebra for which $Y_n$ only sees the action of $\Z_2$.

\begin{prop}
The $G$-C$^*$-algebra $Y_n \otimes C(X)$ belongs to the localizing subcategory generated by the image of $\Ind^G_X \colon \KKK^X \to \KKK^G$.
\end{prop}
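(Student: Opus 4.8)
The plan is to peel off the finite part of the acting group and reduce to a statement about the residual $\Z_2$-symmetry. Write $\Gamma = \Z_2 \ltimes \Z$ with $\Z_2$ acting on $\Z$ by $k \mapsto -k$, so that $G = \Gamma \ltimes_\alpha X$ factors as $G = \Z_2 \ltimes H$ where $H = \Z \ltimes X$ carries the odometer action. Since the odometer is free, $H$ is a principal étale groupoid, so its stabilizers are trivial (in particular torsion-free); and since $\Z$ is amenable, $H$ satisfies the strong Baum--Connes conjecture. Thus the hypotheses of Theorem~\ref{thm:KK-induction-from-base} hold for $H$, so every $H$-C$^*$-algebra lies in $\langle \Ind^H_X \KKK^X \rangle$. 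Combined with the transitivity isomorphism $\Ind^G_H \circ \Ind^H_X \simeq \Ind^G_X$ (as in the proof of Proposition~\ref{prop:tfs}) and the fact that $\Ind^G_H$ is exact and compatible with countable direct sums, this yields $\langle \Ind^G_H \KKK^H \rangle \subseteq \langle \Ind^G_X \KKK^X \rangle$. Hence it suffices to prove $Y_n \otimes C(X) \in \langle \Ind^G_H \KKK^H \rangle$.

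For the $\Z_2$-part, the restriction functor $\Res^{\Z_2} \colon \KKK^{\Z_2} \to \KKK$ has the left adjoint $\iota \colon C \mapsto C_0(\Z_2) \otimes C$ with the translation action, so Proposition~\ref{prop:simplicial-res-from-adj-fs} provides a complementary pair $(\langle \iota\KKK \rangle, \cN_{\ker \Res^{\Z_2}})$ in $\KKK^{\Z_2}$, where $\langle \iota\KKK\rangle$ denotes the localizing subcategory generated by the $\iota C$ and $\cN_{\ker \Res^{\Z_2}}$ consists of the $N$ with $\Res^{\Z_2} N \simeq 0$. Since $\Z_2$ acts freely on $T^*S^n$, the object $Y_n$ is orthogonal to all such $N$, as recalled just before the statement; the standard property of complementary pairs then gives $Y_n \in \langle \iota\KKK \rangle$.

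It remains to transport this into $\KKK^G$. Let $\Phi \colon \KKK^{\Z_2} \to \KKK^G$ be the functor $B \mapsto B \otimes C(X)$, where $B$ is regarded as a $\Gamma$-algebra via $\Gamma \twoheadrightarrow \Z_2$ and $G = \Gamma \ltimes X$ acts diagonally; this is exactly the way $Y_n \otimes C(X)$ is a $G$-C$^*$-algebra, and $\Phi$ is exact and compatible with countable direct sums, being a composite of pullback along a groupoid homomorphism with $-\otimes C(X)$. Hence $\Phi$ carries $\langle \iota\KKK \rangle$ into the localizing subcategory generated by the objects $\Phi(\iota C) = (C_0(\Z_2) \otimes C) \otimes C(X)$. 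By the Fell absorption computation carried out in the proof of Proposition~\ref{prop:tfs}, now with $\Z_2$ and $H$ playing the roles of $\Gamma$ and $G$ there, one identifies $(C_0(\Z_2) \otimes C) \otimes C(X) \simeq \Ind^G_H \Res^G_H(C \otimes C(X))$ as $G$-algebras, where the leg $C$ carries the trivial action; in particular this object lies in $\Ind^G_H \KKK^H$. Therefore $\Phi(\langle \iota\KKK \rangle) \subseteq \langle \Ind^G_H \KKK^H \rangle$, and applying $\Phi$ to $Y_n$ gives $Y_n \otimes C(X) = \Phi(Y_n) \in \langle \Ind^G_H \KKK^H \rangle$, which by the first paragraph completes the proof.

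The step I expect to demand the most care is the Fell absorption identification in the last paragraph: one has to chase the twisted $G = \Z_2 \ltimes H$-action on $C_0(\Z_2) \otimes C \otimes C(X)$ — how the $\Z_2$-translation of the $C_0(\Z_2)$-leg combines with the odometer action and the order-two automorphism of $C(X)$ — and match it with the induced algebra $\Ind^G_H \Res^G_H(C \otimes C(X))$, exactly as in Proposition~\ref{prop:tfs}. One should also check that the orthogonality-to-membership step of the second paragraph is applied inside $\KKK^{\Z_2}$ rather than $\KKK^G$, and that $\iota$ is genuinely the left (not right) adjoint needed for Proposition~\ref{prop:simplicial-res-from-adj-fs}; everything else is a recombination of results already established in the paper.
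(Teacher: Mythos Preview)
Your argument is correct, but it takes a genuinely different route from the paper's own proof. The paper argues directly that the $\Gamma$-action on $T^*S^n \times X$ is free: any element of $\Gamma$ with a fixed point in $X$ is conjugate to $(g,0)$ or $(g,1)$, and such elements act freely on $T^*S^n$ because the antipodal $\Z_2$-action does. Freeness of $G \ltimes (T^*S^n \times X)$ then allows one to invoke the machinery from the proof of Theorem~\ref{thm:KK-induction-from-base} (the Čech-type resolution over a free proper $G$-space) to place $Y_n \otimes C(X)$ in $\langle \Ind^G_X \KKK^X\rangle$ in one stroke, and then restrict scalars back to $\KKK^G$.

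Your approach instead splits the problem along the semidirect product $G=\Z_2\ltimes H$: you first use the complementary pair in $\KKK^{\Z_2}$ and orthogonality to place $Y_n$ in $\langle\Ind^{\Z_2}_e\KKK\rangle$, then transport via $\Phi$ and the Fell absorption identity from the proof of Proposition~\ref{prop:tfs} to land in $\langle\Ind^G_H\KKK^H\rangle$, and finally use Theorem~\ref{thm:KK-induction-from-base} on the principal groupoid $H$ to descend to $\langle\Ind^G_X\KKK^X\rangle$. This is effectively a variant of Proposition~\ref{prop:tfs} in which the torsion-freeness of the outer group (which would be false for $\Z_2$) is replaced by the specific knowledge that $Y_n$ already lies in the localizing subcategory generated by induced objects. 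The paper's proof is shorter and more geometric; yours is more modular and makes explicit that the only obstruction to running Proposition~\ref{prop:tfs} verbatim is handled by the single orthogonality fact about $Y_n$ stated just before the proposition.
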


\begin{proof}
First, $G \ltimes (T^* S^n \times X)$ is a free groupoid.
Indeed, it is the transformation groupoid of the action $\Gamma \curvearrowright T^* S^n \times X$, but any element $\gamma \in \Gamma$ that has a fixed point in $X$ is either conjugate to $(g, 0)$ or $(g, 1)$.
(Here, $g$ is the nontrivial element of $\Z_2$ and we identify $\Gamma$ with $\Z_2 \times \Z$ as a set.)
By the freeness of $\Z_2 \curvearrowright T^* S^n$, these elements cannot have fixed points in $T^* S^n \times X$.

We thus obtain that $Y_n \otimes C(X)$ belongs to the localizing subcategory generated by the image of $\Ind^{G \ltimes (T^* S^n \times X)}_{T^* S^n \times X}$, see the proof of Theorem \ref{thm:KK-induction-from-base}.
Using the triangulated functor $\KKK^{G \ltimes (T^* S^n \times X)} \to \KKK^G$ given by restricting the scalars of $C_0(T^* S^n \times X)$-algebras to $C(X)$, we obtain the assertion.
\end{proof}

\begin{coro}
We have $P_\cI C(X) \simeq Y_\infty \otimes C(X)$ for $\cI = \ker \Res^G_X$, with the corresponding $\KKK^G$-morphism $Y_\infty \otimes C(X) \to C(X)$ given by $[D_\infty] \otimes \id_{C(X)}$.
\end{coro}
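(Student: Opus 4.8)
The plan is to exhibit $Y_\infty\otimes C(X)$, together with the morphism $[D_\infty]\otimes\id_{C(X)}$ to $C(X)$, as the left-hand vertex of the complementary triangle attached to $C(X)$ by the pair $(\langle\Ind^G_X\KKK^X\rangle,\cN_{\ker\Res^G_X})$, and then appeal to uniqueness of that triangle. Recall that by Proposition \ref{prop:simplicial-res-from-adj-fs}, applied with $\cT=\KKK^G$, $\cS=\KKK^X$, $E=\Ind^G_X$, $F=\Res^G_X$, this pair is complementary, so the triangle $P_\cI C(X)\to C(X)\to N_{C(X)}\to\Sigma P_\cI C(X)$ of Definition \ref{def:compl-pair} exists and is unique up to isomorphism; moreover, since $\KKK^G(P,N_{C(X)})=0$ for every $P\in\cL:=\langle\Ind^G_X\KKK^X\rangle$, the structure morphism $P_\cI C(X)\to C(X)$ is the essentially unique morphism from an object of $\cL$ whose cone restricts to $0$ on $X$. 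So it suffices to produce a morphism with these two properties and see that it is $[D_\infty]\otimes\id_{C(X)}$.

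First I would check that $Y_\infty\otimes C(X)\in\cL$. The preceding proposition gives $Y_n\otimes C(X)\in\cL$ for every $n$. By construction $Y_\infty$ is a sequential homotopy colimit $\varinjlim Y_n$ (obtained from the $\KKK^{\Z_2}$-homotopy colimit $\varinjlim Y'_n$ by transport along the $\KKK^{\Z_2}$-equivalence), and the assignment $B\mapsto B\otimes C(X)$ — with $\Z_2$ acting diagonally and $\Z$ on the $C(X)$-leg — is an exact functor compatible with countable direct sums, hence carries $\varinjlim Y_n$ to the homotopy colimit of the $Y_n\otimes C(X)$ in $\KKK^G$. As $\cL$ is localizing (closed under countable direct sums, suspensions, and two-out-of-three in triangles), the mapping-telescope triangle $\bigoplus_n(Y_n\otimes C(X))\to\bigoplus_n(Y_n\otimes C(X))\to Y_\infty\otimes C(X)\to\Sigma\bigoplus_n(Y_n\otimes C(X))$ puts $Y_\infty\otimes C(X)$ in $\cL$.

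Next I would complete $\varphi:=[D_\infty]\otimes\id_{C(X)}\in\KKK^G(Y_\infty\otimes C(X),C(X))$ to an exact triangle $Y_\infty\otimes C(X)\to C(X)\to N\to\Sigma(Y_\infty\otimes C(X))$ and show $N\in\cN_{\ker\Res^G_X}$. Since an object is $(\ker\Res^G_X)$-contractible exactly when $\id_{\Res^G_X N}=0$, membership $N\in\cN_{\ker\Res^G_X}$ amounts to $\Res^G_X N\simeq 0$; applying the exact functor $\Res^G_X$ to the triangle, this follows once $\Res^G_X\varphi$ is an isomorphism in $\KKK^X$. Now $\Res^G_X$ forgets the $\Gamma$-action while retaining the $X$-structure carried by the $C(X)$-leg, so $\Res^G_X\varphi$ is the image, under the exterior-multiplication functor $\mhyph\otimes C(X)\colon\KKK\to\KKK^X$, of the class of $[D_\infty]$ in $\KKK(Y_\infty,\bC)$. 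The lemma above identifying that nonequivariant class as a $\KKK$-equivalence, together with the fact that functors preserve isomorphisms, shows $\Res^G_X\varphi$ is invertible, hence $\Res^G_X N\simeq 0$.

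Combining the two steps, $Y_\infty\otimes C(X)\to C(X)\to N\to\Sigma(Y_\infty\otimes C(X))$ is an exact triangle with left vertex in $\cL$ and cone in $\cN_{\ker\Res^G_X}$. Using $\KKK^G(Y_\infty\otimes C(X),N_{C(X)})=0$ the morphism $\varphi$ factors through the canonical $P_\cI C(X)\to C(X)$ by a morphism $Y_\infty\otimes C(X)\to P_\cI C(X)$, which by the symmetric argument (using $\KKK^G(P_\cI C(X),N)=0$) is invertible; this gives $P_\cI C(X)\simeq Y_\infty\otimes C(X)$ and identifies the structure morphism with $[D_\infty]\otimes\id_{C(X)}$. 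I expect the step needing the most care to be the identification of $\Res^G_X\varphi$ — that restriction of scalars carries the $\Z_2$-equivariant Poincaré-dual class $\varphi$ to the $C(X)$-ampliation of the nonequivariant image of $[D_\infty]$, which rests on restriction commuting with exterior products — together with the bookkeeping in the first step that promotes the $\KKK^{\Z_2}$-homotopy colimit defining $Y_\infty$ to an honest sequential homotopy colimit in $\KKK^G$, so that the closure properties of $\cL$ genuinely apply.
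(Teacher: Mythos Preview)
Your proof is correct and follows exactly the approach the paper intends: the corollary is stated without proof because it is meant to be an immediate consequence of the preceding proposition ($Y_n\otimes C(X)\in\langle\Ind^G_X\KKK^X\rangle$) together with the lemma that the nonequivariant image of $[D_\infty]$ is a $\KKK$-equivalence, combined with the uniqueness of the complementary triangle. Your write-up simply makes explicit the two steps the paper leaves implicit---closure of the localizing subcategory under the homotopy colimit defining $Y_\infty\otimes C(X)$, and the verification that $\Res^G_X\varphi$ is invertible via the factorisation $\Res^G_X\circ(\mhyph\otimes C(X))\simeq(\mhyph\otimes C(X))\circ\Res^{\Z_2}$---and both are handled correctly.
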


Consequently, the spectral sequence of groupoid homology converges to the $K$-theory groups of the algebra $G \ltimes (Y_\infty \otimes C(X))$.

\appendix
\section{Structure of groupoid equivariant \texorpdfstring{$\KKK$}{KK}-theory}
\label{sec:str-grpd-equiv-KK}

As in the other parts of paper, $G$ denotes a locally compact Hausdorff groupoid with Haar system, and we write $X = G^{(0)}$.
We denote the category of separable $G$-$C^*$-algebras by $\GCalg$.
We regard $C_c(G)$ as a $C_0(X)$-module via pullback by $s$, and denote its completion a right Hilbert $C_0(X)$-module with respect to the inner product by the Haar system by $L^2(G)$.

\subsection{Invariant ideals}

Let us check that continuous actions of $G$ restrict to kernels of equivariant homomorphisms.

\begin{prop}
Let $f\colon A \to B$ be an equivariant homomorphism of $G$-C$^*$-algebras.
Then $I = \ker f$ is a $G$-C$^*$-algebra.
\end{prop}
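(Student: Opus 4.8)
The plan is to transport the $C_0(X)$-algebra structure of $I=\ker f$ to a $G$-action by restricting the action $\alpha^A\colon s^*A\to r^*A$ of $G$ on $A$ to the corresponding ideal of $s^*A$, where $s^*(-)=C_0(G)\tens{s}{C_0(X)}(-)$ and $r^*$ is defined analogously. First I would observe that $I$ is a $C_0(X)$-algebra: since $I$ is a closed two-sided ideal of $A$, the canonical unital $*$-homomorphism $\cM(A)\to\cM(I)$, $m\mapsto(i\mapsto mi)$, sends the center into the center, so the structure map $C_0(X)\to Z(\cM(A))$ composes to a $*$-homomorphism $C_0(X)\to Z(\cM(I))$. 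This is nondegenerate because an approximate unit of $A$ drawn from $C_0(X)$ is also an approximate unit for the ideal $I$, whence $C_0(X)I$ is dense in $I$.

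Next I would restrict the action. Since $G$ has a Haar system, the maps $s,r\colon G\to X$ are open, so $C_0(G)$ regarded as a $C_0(X)$-algebra via $s$ (resp.\ via $r$) is a continuous field, hence $C_0(X)$-nuclear, and the pullback functor $s^*$ (resp.\ $r^*$) is exact on short exact sequences of $C_0(X)$-algebras (cf.\ \cite{bla:defhopf}, \cite{blakirch:glimm}). Applying $s^*$ and $r^*$ to $0\to I\to A\xrightarrow{f}B\to 0$ identifies $s^*I=\ker(s^*f)$ as a $C_0(G)$-ideal of $s^*A$ and $r^*I=\ker(r^*f)$ as one of $r^*A$. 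The equivariance of $f$ says precisely that $\alpha^B\circ s^*f=r^*f\circ\alpha^A$ as maps $s^*A\to r^*B$; since $\alpha^A$ and $\alpha^B$ are isomorphisms, $\alpha^A$ carries $\ker(s^*f)$ isomorphically onto $\ker(r^*f)$, so it restricts to an isomorphism of $C_0(G)$-algebras $\alpha^I\colon s^*I\to r^*I$.

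Finally I would verify the multiplicativity condition $\alpha^I_{gh}=\alpha^I_g\alpha^I_h$. A short computation with approximate units gives $I\cap A\,C_0(X\setminus\{x\})=I\,C_0(X\setminus\{x\})$, so evaluation at a point is exact on our sequence and $I_x=\ker(f_x\colon A_x\to B_x)\hookrightarrow A_x$; likewise $(s^*I)_g=I_{s(g)}$ and $(r^*I)_g=I_{r(g)}$. Under these identifications the fibrewise map induced by $\alpha^I$ at $g\in G$ is just the corestriction of $\alpha^A_g\colon A_{s(g)}\to A_{r(g)}$ to the subalgebras $I_{s(g)},I_{r(g)}$, so the relation $\alpha^A_{gh}=\alpha^A_g\alpha^A_h$ forces $\alpha^I_{gh}=\alpha^I_g\alpha^I_h$, and $(I,\alpha^I)$ is a $G$-C$^*$-algebra. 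The only inputs that are not purely formal are the exactness of the pullback functors $s^*,r^*$ and of evaluation at a point; these are standard facts about $C_0(X)$-algebras, so I expect the main care needed to lie in correctly invoking them rather than in any genuine obstacle.
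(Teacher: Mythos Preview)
Your proof is correct and follows essentially the same route as the paper's: both establish that $s^*I=\ker(s^*f)$ (and likewise for $r^*$) and then restrict the action isomorphism $\alpha^A$ to this ideal. The only difference is in packaging: the paper proves the exactness of $s^*$ explicitly via a $3\times 3$ diagram chase (using nuclearity of $C_0(G)$ to get exactness of $0\to C_0(G)\otimes I\to C_0(G)\otimes A\to C_0(G)\otimes B$, then identifying $s^*(-)=(C_0(G)\otimes -)_{\Delta(X)}$), whereas you invoke this as a standard fact; conversely, you spell out the fibrewise verification of $\alpha^I_{gh}=\alpha^I_g\alpha^I_h$ that the paper leaves implicit.
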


\begin{proof}
Since $I$ is an ideal of $A$, it inherits a structure of $C_0(X)$-algebra.
We need to show that there is an isomorphism of $C_0(G)$-algebras
\[
s^* I = C_0(G) \tens{s}{C_0(X)} I \to r^* I = C_0(G) \tens{r}{C_0(X)} I
\]
defining a continuous action of $G$.
By the nuclearity of $C_0(G)$ as a C$^*$-algebra,
\[
0 \to C_0(G) \otimes I \to C_0(G) \otimes A \to C_0(G) \otimes B \to 0
\]
is exact.

We first claim that $s^* I$ is the kernel of $s^* A \to s^* B$ induced by $f$.
By the $C_0(X)$-nuclearity of $C_0(G)$, we can write
\[
s^* I = (C_0(G) \otimes I)_{\Delta(X)},
\]
etc. Then we have a commutative diagram
\[
\begin{tikzcd}
& 0 \arrow[d] & 0 \arrow[d] & 0 \arrow[d] &\\
0 \arrow[r] & I' \arrow[r] \arrow[d] & A' \arrow[r] \arrow[d] & B' \arrow[r] \arrow[d] & 0\\
0 \arrow[r] & C_0(G) \otimes I \arrow[r] \arrow[d] & C_0(G) \otimes A \arrow[r] \arrow[d] & C_0(G) \otimes B \arrow[r] \arrow[d] & 0\\
0 \arrow[r] & s^* I \arrow[r] \arrow[d] & s^* A \arrow[r] \arrow[d] & s^* B \arrow[r] \arrow[d] & 0\\
& 0 & 0 & 0 &
\end{tikzcd}
\]
with $I' = C_0((G \times X) \setminus (G \times_X X)) (C_0(G) \otimes I)$, etc., and we know the exactness of the vertical sequences and top and middle horizontal sequences.
Then the bottom sequence is also exact, which establishes the claim.

Then looking at the action map
\[
s^* A \to r^* A,
\]
we see that $s^* I$ is mapped onto $r^* I = \ker(r^* A \to r^* B)$ bijectively.
\end{proof}

\subsection{Stabilization}

When $\cE$ is a Hilbert $A$-module, we denote the Hilbert $A$-module direct sum of countable copies of $\cE$ by $\cE^\infty$.
From now on let us assume that $G$ admits a Haar system $\lambda$, so that $L^2(G)$ makes sense as a Hilbert $C_0(X)$-module.

\begin{lemm}\label{lem:Hilb-mod-stab-by-L2G-infty}
Let $A$ be a separable $G$-C$^*$-algebra, and $\cE$ be a countably generated Hilbert $G$-$A$-module.
If $E$ is full as an right Hilbert $A$-module, we have
\[
L^2(G)^\infty \otimes_{C_0(X)} \cE \simeq L^2(G)^\infty \otimes_{C_0(X)} A
\]
as Hilbert $G$-$A$-modules.
\end{lemm}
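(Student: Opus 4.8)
The plan is to read the statement as asserting that $\cH_\cE:=L^2(G)^\infty\otimes_{C_0(X)}\cE$ is a \emph{$G$-absorbing} Hilbert module, i.e.\ that it absorbs the standard module $\cH_A:=L^2(G)^\infty\otimes_{C_0(X)}A$. Indeed, once we know $\cH_\cE\oplus\cH_A\simeq\cH_\cE$ as Hilbert $G$-$A$-modules, the groupoid-equivariant Kasparov stabilization theorem (Le Gall, \cite{gall:kk}) gives $\cH_\cE\oplus\cH_A\simeq\cH_A$ — since $\cH_\cE$ is countably generated over $A$, because $G$ is second countable (so $L^2(G)$ is countably generated over $C_0(X)$) and $\cE$ is countably generated over $A$ — and the two displayed isomorphisms combine to $\cH_\cE\simeq\cH_A$. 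So the whole content is to show that $\cH_\cE$ absorbs $\cH_A$, and this is exactly where the fullness of $\cE$ over $A$ (which passes to $\cH_\cE$) will be used.

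To prove that $\cH_\cE$ absorbs $\cH_A$, I would exhibit each of $\cH_\cE$ and $\cH_A$ as a $G$-complemented submodule of the other and then run an Eilenberg swindle. First, equivariant Kasparov stabilization applied to $\cH_\cE$ gives a $G$-equivariant isomorphism $\cH_\cE\oplus\cH_A\simeq\cH_A$, so $\cH_\cE$ is $G$-complemented in $\cH_A$; applied to $\cE$ itself it also gives $\cE\simeq p\,\cH_A$ for a $G$-equivariant projection $p$ on $\cH_A$. Second, from the fullness of $\cE$ (equivalently, fullness of $p$: the ideal of $\cK(\cH_A)$ generated by $p$ is everything) I would build a $G$-equivariant adjointable isometry $V\colon\cH_A\to\cH_\cE$. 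Concretely, fullness together with countable generation of $\cE$ over $A$ produces a strictly convergent series $\sum_n\langle\zeta_n,\zeta_n\rangle=1$ in $\mathcal M(A)$ with the $\zeta_n$ in the multiplier module of $\cE$ — obtained after the standard renormalization of a strictly positive element assembled from a countable generating set — and one distributes the $\zeta_n$ over the countably many coordinates of the $L^2(G)^\infty$ factor to define $V$; its range is $G$-complemented since $V$ is an adjointable isometry. Thus $\cH_A$ is a $G$-complemented submodule of $\cH_\cE$. Finally, both $\cH_A$ and $\cH_\cE$ absorb countable direct sums of copies of themselves, $G$-equivariantly, because a bijection $\N\simeq\N\times\N$ induces a $G$-equivariant isomorphism $L^2(G)^\infty\simeq(L^2(G)^\infty)^\infty$; with the two complemented-submodule relations in hand, the usual Eilenberg swindle — carried out in the category of countably generated Hilbert $G$-$A$-modules — yields $\cH_\cE\simeq\cH_\cE\oplus\cH_A$, hence $\cH_\cE\simeq\cH_A$.

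The first inclusion and the swindle are formal once the equivariant stabilization theorem is available, so I expect the genuine obstacle to be the second inclusion: producing an \emph{honest} $G$-equivariant adjointable isometry $\cH_A\to\cH_\cE$ out of the fullness hypothesis in the continuous-field-over-$X$ setting — in particular the normalization to $\sum_n\langle\zeta_n,\zeta_n\rangle=1$ (strictly) and the verification that the assembled map is both adjointable and compatible with the $G$-action (the role of the $L^2(G)^\infty$ coordinates being precisely to make room for this). An equivalent way to package this step, which may be cleaner to write out, is to show that the $G$-equivariant projection $p^{\oplus\infty}$ with $\cH_\cE\simeq p^{\oplus\infty}\cH_A$ is full and properly infinite, and to invoke an equivariant triviality theorem for such projections; but in either formulation the fullness input is the crux.
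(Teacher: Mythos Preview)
Your strategy has a real gap at the step you yourself flag as the crux: producing a \emph{$G$-equivariant} adjointable isometry $\cH_A\to\cH_\cE$ from fullness. The frame $(\zeta_n)$ with $\sum_n\langle\zeta_n,\zeta_n\rangle=1$ (strictly) yields a perfectly good \emph{non-equivariant} isometry $A\to\cE^\infty$, but ``distributing the $\zeta_n$ over the $L^2(G)^\infty$ coordinates'' does not make the resulting map intertwine the diagonal $G$-actions: the $\zeta_n$ have no reason to be $G$-invariant, and the countable index set of $L^2(G)^\infty$ carries no $G$-structure that could absorb the defect. What is actually needed here is a Fell-absorption statement for Hilbert $G$-$A$-modules --- that the diagonal $G$-action on $L^2(G)\otimes_{C_0(X)}\cF$ is $G$-unitarily equivalent to the action through the regular representation on the $L^2(G)$ leg alone --- and this is precisely \cite{MR2044224}*{Lemma~3.6}.

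Once you grant that lemma, however, the Kasparov stabilization and the Eilenberg swindle are both unnecessary. The paper's proof is two lines: fullness gives a \emph{non-equivariant} isomorphism $\cE^\infty\simeq A^\infty$ of Hilbert $A$-modules \cite{MR1325694}*{Proposition~7.4}, and Popescu's lemma then upgrades the induced identification $L^2(G)\otimes_{C_0(X)}\cE^\infty\simeq L^2(G)\otimes_{C_0(X)}A^\infty$ to a $G$-equivariant one; rearranging the countable index turns this into the assertion. So the ingredient you are missing is exactly the one that, once supplied, short-circuits your entire argument.
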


\begin{proof}
By fullness, we have $\cE^\infty \simeq A^\infty$ as Hilbert $A$-modules \cite{MR1325694}*{Proposition 7.4}.
Then the assertion follows from \cite{MR2044224}*{Lemma 3.6}.
\end{proof}

\begin{prop}\label{prop:stability-conditions-equiv}
Let $F$ be a functor from $\GCalg$ to an additive category.
Then the following conditions are equivalent:
\begin{enumerate}
\item\label{it:st-1} if $\cE$ is a Hilbert $G$-$A$-module which is full over $A$, the natural maps
\begin{align*}
F(A) &\to F(\cK(A \oplus \cE)),&
F(\cK(\cE)) &\to F(\cK(A \oplus \cE))
\end{align*}
are isomorphisms;
\item\label{it:st-2} same as above, but just for $\cE = \cE' \otimes_{C_0(X)} A$ where $\cE'$ is a Hilbert $G$-$C_0(X)$-module which is full over $C_0(X)$;
\item\label{it:st-3} same as above, but just for $\cE' = L^2(G)^\infty$.
\end{enumerate}
\end{prop}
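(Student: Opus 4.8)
The implications $(1)\Rightarrow(2)\Rightarrow(3)$ are immediate specializations: if $\cE'$ is a Hilbert $G$-$C_0(X)$-module that is full over $C_0(X)$, then $\cE'\otimes_{C_0(X)}A$ is a Hilbert $G$-$A$-module full over $A$ (its $A$-valued inner products span $\overline{\langle\cE',\cE'\rangle_{C_0(X)}\,A}=\overline{C_0(X)A}=A$), and $L^2(G)^\infty$ is full over $C_0(X)$. So the content is $(3)\Rightarrow(1)$, and the plan is to bootstrap from the single module $\cH_A:=L^2(G)^\infty\otimes_{C_0(X)}A$ to all full, countably generated $G$-$A$-modules (countable generation being automatic by separability of $A$).

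Two standard facts will carry the argument. First, $\cH_A$ is a model of the standard absorbing $G$-Hilbert $A$-module (with the left-translation action on the $L^2(G)^\infty$ factor), so the equivariant Kasparov stabilization theorem gives $\cE\oplus\cH_A\simeq\cH_A$ for every countably generated $G$-$A$-module $\cE$; trivially $\cH_A^{\oplus2}\simeq\cH_A$, and together with Lemma \ref{lem:Hilb-mod-stab-by-L2G-infty} this yields $L^2(G)^\infty\otimes_{C_0(X)}\cM\simeq\cH_A$ for every full, countably generated $\cM$. Second, a Morita equivalence implemented by a $(D,A)$-imprimitivity $G$-bimodule $\cM$ gives natural $*$-isomorphisms $\cK_D(\cN)\simeq\cK_A(\cN\otimes_D\cM)$ carrying corner embeddings to corner embeddings. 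I will also use repeatedly that ``$F$ inverts a given corner embedding'' is a property stable under replacing the corner embedding by any $*$-isomorphic arrow — which uses nothing beyond the fact that a functor preserves isomorphisms, so no homotopy-invariance hypothesis on $F$ is needed.

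Granting condition (3) for all separable $G$-algebras, I would first apply it with base algebra $D:=\cK_A(\cM)$, for an arbitrary full countably generated $G$-$A$-module $\cM$, and transport along the $(D,A)$-imprimitivity bimodule $\cM$. Since $\cH_D\otimes_D\cM\simeq L^2(G)^\infty\otimes_{C_0(X)}\cM\simeq\cH_A$, condition (3) becomes the statement that $F$ inverts the corner $\cK_A(\cM)\hookrightarrow\cK_A(\cM\oplus\cH_A)$. Taking $\cM=A\oplus\cE$ shows that $F$ inverts $\cK_A(A\oplus\cE)\hookrightarrow\cK_A(A\oplus\cE\oplus\cH_A)$. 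On the other hand, a stabilization isomorphism $\phi\colon\cE\oplus\cH_A\xrightarrow{\ \sim\ }\cH_A$ gives $\operatorname{id}_A\oplus\phi\colon A\oplus\cE\oplus\cH_A\xrightarrow{\ \sim\ }A\oplus\cH_A$, which is the identity on the $A$-summand, hence induces a $*$-isomorphism $\cK_A(A\oplus\cE\oplus\cH_A)\simeq\cK_A(A\oplus\cH_A)$ conjugating the corner $A\hookrightarrow\cK_A(A\oplus\cE\oplus\cH_A)$ to the corner $A\hookrightarrow\cK_A(A\oplus\cH_A)$; the latter is inverted by $F$, being exactly the first arrow of condition (3) for $A$. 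Composing along $A\hookrightarrow\cK_A(A\oplus\cE)\hookrightarrow\cK_A(A\oplus\cE\oplus\cH_A)$ then gives the first map of (1). For the second map $\cK_A(\cE)\hookrightarrow\cK_A(A\oplus\cE)$, I would use that $\cE$, being full over $A$, is a $(\cK_A(\cE),A)$-imprimitivity $G$-bimodule; transporting this corner along $\cE$ identifies it, as an arrow, with the corner $B\hookrightarrow\cK_B(B\oplus\bar\cE)$, where $B=\cK_A(\cE)$ and $\bar\cE$ is the conjugate bimodule (full over $B$). This is the first map of (1) with $B$ and $\bar\cE$ in place of $A$ and $\cE$, so the argument just given applies verbatim and shows it is inverted as well.

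The step I expect to be most delicate is the bookkeeping: verifying that each Morita transport and each stabilization isomorphism carries the relevant corner embedding to \emph{precisely} the corner embedding occurring in the statement, so that the abstract isomorphisms produced on $F$-values really are the natural maps one wants; and making sure that the module $L^2(G)^\infty\otimes_{C_0(X)}A$ appearing in condition (3) is correctly identified with the absorbing standard $G$-Hilbert $A$-module, so that equivariant stabilization is applicable. None of this is conceptually hard, but it is where the care is concentrated.
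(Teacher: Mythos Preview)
Your proposal is correct and follows the same overall strategy as the paper: both reduce $(3)\Rightarrow(1)$ to applying condition~(3) with varying base algebras together with Lemma~\ref{lem:Hilb-mod-stab-by-L2G-infty}. The paper's proof is rather terse---it observes that $(3)$ applied to $A$ gives $F(A)\simeq F(\cK(\cH_A))$, then applies $(3)$ to $\cK(\cE)$ and uses Lemma~\ref{lem:Hilb-mod-stab-by-L2G-infty} to get $F(\cK(\cE))\simeq F(\cK(\cH_A))$, and then defers all the arrow-level tracking to a sentence reading ``a routine bookkeeping gives that this can be indeed induced by maps as in~(1).'' Your version is precisely that bookkeeping written out: you use Morita transport along imprimitivity bimodules to identify which corner embeddings are inverted, you invoke the equivariant stabilization $\cE\oplus\cH_A\simeq\cH_A$ to align the corner at $A$, and you run a $2$-out-of-$3$ argument on the composite $A\hookrightarrow\cK_A(A\oplus\cE)\hookrightarrow\cK_A(A\oplus\cE\oplus\cH_A)$. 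Your asymmetric treatment of the second map via the conjugate bimodule $\bar\cE$ is a nice touch that the paper does not make explicit. The only caution is the one you already flag: that $\cH_A=L^2(G)^\infty\otimes_{C_0(X)}A$ really is the absorbing $G$-Hilbert $A$-module so that $\cE\oplus\cH_A\simeq\cH_A$ holds; this is indeed the content of the result of Popescu that underlies Lemma~\ref{lem:Hilb-mod-stab-by-L2G-infty}, so you are on solid ground.
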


\begin{proof}
The only nontrivial implication is from \eqref{it:st-3} to \eqref{it:st-1}.
Since $\cK(L^2(G)^\infty \otimes_{C_0(X)} A)$ is isomorphic to $\cK(L^2(G)^\infty) \otimes_{C_0(X)} A$, \eqref{it:st-3} implies that $F(A) \simeq F(\cK(L^2(G)^\infty)\otimes_{C_0(X)}A)$.
Suppose $\cE$ is as in \eqref{it:st-1}.
Then $\cK(L^2(G)^\infty) \otimes_{C_0(X)} \cK(\cE)$ is isomorphic to $\cK(L^2(G)^\infty \otimes_{C_0(X)} A)$ by this observation and Lemma \ref{lem:Hilb-mod-stab-by-L2G-infty}.
We thus obtain $F(A) \simeq F(\cK(\cE))$, and a routine bookkeeping gives that this can be indeed induced by maps as in \eqref{it:st-1}.
\end{proof}

If the conditions in the above proposition are satisfied, we say that $F$ is \emph{stable}.

\subsection{Universal property}
\label{sec:univ-prop-KKG}

Again suppose $F$ is a functor from $\GCalg$ to an additive category.
As usual, $F$ is \emph{homotopy invariant} if the evaluation maps $A \otimes C([0, 1]) \to A$ at $0 \le t \le 1$ induce isomorphisms $F(A \otimes C([0, 1])) \simeq F(A)$, and is \emph{split exact} if an extension $I \to A \to B$ with splitting $B \to A$ by an equivariant $*$-homomorphism induces an isomorphism $F(A) \simeq F(I) \oplus F(B)$.

\begin{prop}\label{prop:univ-prop-KKG}
The canonical functor $\GCalg \to \KKK^G$ is a universal functor satisfying stability, homotopy invariance, and split exactness. 
\end{prop}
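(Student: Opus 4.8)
The plan is to verify first that the canonical functor $\GCalg \to \KKK^G$ has the three properties, and then to establish the universal property by the classical argument of Higson (and Cuntz--Skandalis), adapted to the groupoid-equivariant setting, where the nontrivial inputs turn out to be purely formal.

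First I would check stability, homotopy invariance, and split exactness of $A \mapsto [A]$ into $\KKK^G$. Homotopy invariance is immediate from the description of $\KKK^G$ used above: the evaluation homomorphisms $A \otimes C([0,1]) \to A$ at the endpoints are $\KKK^G$-equivalences because $KK$-classes are homotopy invariant. For split exactness, a split extension $I \rightarrowtail A \twoheadrightarrow B$ with equivariant section yields, together with the mapping-cone triangle of $A \to B$ and the section, an isomorphism $[A] \cong [I] \oplus [B]$ in $\KKK^G$; this is the standard additivity of equivariant $KK$ on split extensions. Stability is precisely the content of Proposition~\ref{prop:stability-conditions-equiv}, once one recalls that the stabilization homomorphisms $A \to \cK(L^2(G)^\infty) \otimes_{C_0(X)} A$ are $\KKK^G$-equivalences; so condition \eqref{it:st-3} of that proposition holds for the canonical functor, hence all three conditions do.

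For universality, let $F \colon \GCalg \to \mathcal{C}$ be a functor to an additive category that is stable, homotopy invariant, and split exact. I would follow Higson's proof of the universal property of $KK$ step by step. The first step shows that $F$ is matrix-stable and, more generally, inverts the stabilization homomorphisms $A \to \cK(L^2(G)^\infty) \otimes_{C_0(X)} A$; this uses split exactness together with the stability hypothesis and Proposition~\ref{prop:stability-conditions-equiv}. The second --- and central --- step is to construct from $F$ a natural action of equivariant quasihomomorphisms: to an equivariant quasihomomorphism $(\varphi_+, \varphi_-) \colon A \rightrightarrows \cM(B) \rhd B$ one associates a morphism $F(A) \to F(B)$, functorially under composition and invariant under homotopy. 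This is done exactly as in the nonequivariant case, by applying split exactness to suitable pullback and mapping-cone extensions; the only thing to note is that all the auxiliary algebras stay within $\GCalg$ --- kernels and pullbacks of equivariant homomorphisms carry canonical $G$-actions, as recorded at the start of this appendix, and the extensions produced are equivariantly split. The third step invokes the Cuntz picture of groupoid-equivariant $KK$: every class in $\KKK^G(A,B)$ is represented, up to homotopy, by an equivariant quasihomomorphism from $A$ into a stabilization $\cK(L^2(G)^\infty) \otimes_{C_0(X)} B$ of $B$ (equivalently, by an equivariant $*$-homomorphism out of $qA$). One then defines $\bar F \colon \KKK^G \to \mathcal{C}$ to agree with $F$ on objects and to send such a class to the composite of the quasihomomorphism action $F(A) \to F(\cK(L^2(G)^\infty) \otimes_{C_0(X)} B)$ with the inverse of the stabilization isomorphism. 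Homotopy invariance of $F$ makes this well defined, functoriality follows from the compatibility of the Kasparov product with composition of quasihomomorphisms, and $\bar F$ restricts to $F$ on $*$-homomorphisms, since a homomorphism $\varphi$ is the quasihomomorphism $(\varphi,0)$. Uniqueness then holds because any functor $\KKK^G \to \mathcal{C}$ extending $F$ agrees with $F$ on $*$-homomorphisms, and every morphism of $\KKK^G$ is a composite of classes of $*$-homomorphisms and inverses of such classes --- in particular of the stabilization morphisms --- so a functor on $\KKK^G$ is determined by its values on $*$-homomorphisms.

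The main obstacle is the second step, the construction and naturality of the quasihomomorphism action. In the present equivariant setting this is no harder than the classical one, but one must be careful that every auxiliary object produced along the way (pullbacks, mapping cones, cones of equivariant maps, multiplier algebras of $G$-C$^*$-algebras) again lies in $\GCalg$ and that the section homomorphisms used when invoking split exactness are genuinely $G$-equivariant; granting this, Higson's argument transports verbatim. The other point requiring care is having the Cuntz picture of $\KKK^G$ available with the correct stabilization $\cK(L^2(G)^\infty) \otimes_{C_0(X)} (-)$, which is precisely why the reformulation of stability in Proposition~\ref{prop:stability-conditions-equiv} was isolated beforehand.
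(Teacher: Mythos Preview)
Your overall strategy is the same as the paper's: both run Higson's argument through the Cuntz quasihomomorphism picture, and the paper likewise builds the auxiliary pullback algebra $D$ from a quasihomomorphism and checks it is a $G$-C$^*$-algebra (Lemma~\ref{lem:D-is-G-C-alg}), so your ``second step'' is on target.

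The gap is that you have misidentified the main obstacle. The genuinely nontrivial input is your \emph{third} step, the Cuntz picture itself. In Le~Gall's definition, a $G$-$A$-$B$-Kasparov cycle $(\pi,\cE,T)$ only has $T$ equivariant \emph{modulo compacts}, and for groupoids there is no naive averaging to fix this; Proposition~\ref{prop:stability-conditions-equiv} says nothing about it. The paper singles out Oyono--Oyono's result (Proposition~\ref{prop:T-equivar-up-to-compact}) as the key ingredient: one must first stabilize the \emph{source} to $\tilde A=\cK(L^2(G)^\infty)\otimes_{C_0(X)}A$ and replace $T$ by a strictly $G$-equivariant $T'$ on $L^2(G)^\infty\otimes_{C_0(X)}\cE$, and only then does one obtain equivariant quasihomomorphisms $\tilde A\rightrightarrows \cM(\tilde B)\rhd \tilde B$ representing every class (note the stabilization on both sides, not just on $B$ as you wrote). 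So the claim that ``the nontrivial inputs turn out to be purely formal'' is not right: without Proposition~\ref{prop:T-equivar-up-to-compact} you have no equivariant quasihomomorphism to feed into your second step, and the Higson machine does not start.
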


Before getting into the proof, recall that an element in $\KKK^G(A, B)$ is by definition represented by a $G$-$A$-$B$-Kasparov cycle $(\pi, \cE, T)$, where $\cE$ is a $\Z_2$-graded right Hilbert $B$-module, $\pi$ is a $*$-homomorphism from $A$ to $\cL(\cE)$, and $T$ is a certain odd endomorphism of $\cE$.
Note that $T$ is only assumed to be $G$-equivariant up to compact errors.
A key ingredient is the following result of Oyono-Oyono, which allows us to replace such cycles by strictly equivariant ones.
(To be precise, his result is for odd cycles, but his construction is compatible with grading on the underlying Hilbert module, otherwise we can work with suspensions.)

\begin{prop}[\cite{laff:kkban}*{Section A.4}]\label{prop:T-equivar-up-to-compact}
Under the above setting, there is an odd $G$-equivariant endomorphism $T'$ on $\tilde\cE = L^2(G)^\infty \otimes_{C_0(X)} \cE$ such that $(\iota \otimes \pi, \tilde\cE, T')$ is a $G$-$(\cK(L^2(G)^\infty) \otimes_{C_0(X)} A)$-$B$-Kasparov cycle, and such that $(S \otimes \pi(a)) (1 \otimes T - T')$ is a compact endomorphism for all $S \in \cK(L^2(G)^\infty)$ and $a \in A$.
\end{prop}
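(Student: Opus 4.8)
The plan is to reconstruct Oyono--Oyono's stabilization argument. One cannot hope to average $T$ over $G$ into a strictly equivariant operator (just as for a non-amenable group), but passing to $\tilde\cE=L^2(G)^\infty\otimes_{C_0(X)}\cE$ and using a groupoid version of Fell's absorption principle creates enough room to conjugate $1\otimes T$ into a strictly equivariant $T'$; the price is that $T'-1\otimes T$ is only \emph{fiberwise} compact over $G$, and the work is to check that cutting it down by a compact operator on the $L^2(G)^\infty$-leg makes it genuinely compact.

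First I would record the data. The action on $\cE$ is a norm-continuous cocycle of unitaries $V_g\colon\cE_{s(g)}\to\cE_{r(g)}$, and the hypothesis that $(\pi,\cE,T)$ is merely a $G$-$A$-$B$-Kasparov cycle means exactly that $g\mapsto V_gT_{s(g)}V_g^{-1}-T_{r(g)}$ is a norm-continuous family of $\cK(\cE)$-elements, besides the usual $T^2-1,\ T-T^{*},\ [T,\pi(a)]\in\cK(\cE)$. I would also note the standard identification $\cK(L^2(G)^\infty)\otimes_{C_0(X)}\cK(\cE)\subseteq\cK(\tilde\cE)$; it makes $(\iota\otimes\pi,\tilde\cE,1\otimes T)$ into a genuine $G$-$(\cK(L^2(G)^\infty)\otimes_{C_0(X)}A)$-$B$-Kasparov cycle, since for $x=\iota(S)\otimes\pi(a)$ each of $x((1\otimes T)^2-1)$, $x((1\otimes T)^{*}-1\otimes T)$ and $[x,1\otimes T]$ lands in $\cK(L^2(G)^\infty)\otimes_{C_0(X)}\cK(\cE)$ --- although of course $1\otimes T$ is not $G$-equivariant.

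Next I would build the Fell unitary. In the ``$L^2$-sections over $G$'' picture of $\tilde\cE$, there is a $G$-equivariant unitary $W$ from $\tilde\cE$ with its diagonal action (left regular representation on $L^2(G)^\infty$, given action on $\cE$) onto the same module with the action that leaves the $\cE$-leg untouched (left regular on $L^2(G)^\infty$ only); concretely $W$ acts at the point $g\in G$ by $V_g^{-1}$ on the $\cE$-coordinate, and unitarity together with equivariance is a direct check using the cocycle identity. On the target the fiberwise operator $1\otimes T$ is strictly $G$-equivariant, because that action does not move the $\cE$-leg beyond its $C_0(X)$-module structure, which $T$ respects. I then set
\[
T'=W^{*}(1\otimes T)W\in\cL(\tilde\cE),
\]
which is odd and strictly $G$-equivariant by construction; concretely $T'$ is the fiberwise operator $g\mapsto V_gT_{s(g)}V_g^{-1}$, so that $D:=1\otimes T-T'$ and $D^{*}$ are fiberwise $\cK(\cE)$-valued and norm-continuous in $g$ by the first step.

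The decisive step, which I expect to be the only genuinely analytic point, is the following lemma: if $Y\in\cL(\tilde\cE)$ is fiberwise compact over $G$ (a norm-continuous family $g\mapsto Y_g\in\cK(\cE)$), then $(\iota(S)\otimes\pi(a))Y\in\cK(\tilde\cE)$ for all $S\in\cK(L^2(G)^\infty)$, $a\in A$. Approximating $S$ in norm by finite sums of $\theta_{f,h}$ with $f,h\in C_c(G)^\infty$, it suffices to treat $(\theta_{f,h}\otimes 1_\cE)Y$; this factors as $|f\rangle\circ\bigl((\langle h|\otimes 1_\cE)Y\bigr)$ with $|f\rangle\colon\cE\to\tilde\cE$ adjointable, so it is enough that $(\langle h|\otimes 1_\cE)Y\colon\tilde\cE\to\cE$ be compact --- and this operator is, up to the Haar-system integral over the compact set $\supp h$, a norm-continuous average of the compact operators $Y_g$, hence a norm-limit of finite-rank operators. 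Granting the lemma, $(S\otimes\pi(a))D$ is compact, which is the last asserted property; and applying it to $D$, $D^{*}$ and the fiberwise-$\cK(\cE)$-valued operators $D^{2}$ and $[1\otimes T,D]=-[1\otimes T,T']$ upgrades the Kasparov-cycle identities for $(\iota\otimes\pi,\tilde\cE,1\otimes T)$ recorded above to the corresponding identities for $(\iota\otimes\pi,\tilde\cE,T')$ (for instance $x((T')^2-1)$ is expanded and each resulting term is handled either by the group-case identity or by the lemma). Everything outside this lemma is bookkeeping with the cocycle for $V$; second countability and the properties of the Haar system enter only there.
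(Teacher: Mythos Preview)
The paper does not supply its own proof of this proposition; it is stated with the attribution to Oyono--Oyono's appendix in \cite{laff:kkban} and then used as a black box in the proof of Proposition~\ref{prop:univ-prop-KKG}. So there is nothing in the paper to compare against beyond the citation itself.

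Your proposal is a faithful reconstruction of that cited argument and is essentially correct. The Fell-type intertwiner $W$ conjugates $1\otimes T$ into a strictly $G$-equivariant $T'$, and the defect $D=1\otimes T-T'$ is fiberwise the almost-equivariance error $g\mapsto T_{r(g)}-V_gT_{s(g)}V_g^{-1}$, which is a norm-continuous family in $\cK(\cE)$ by hypothesis. Your key lemma (a fiberwise-compact multiplication operator becomes genuinely compact after cutting by a rank-one $\theta_{f,h}$ with $h\in C_c(G)^\infty$, via uniform finite-rank approximation on $\supp h$) is the analytic heart, and your sketch of it is sound. The closing bookkeeping---expanding $(T')^2-1$, $(T')^*-T'$, $[x,T']$ in terms of $1\otimes T$ and $D$ and feeding each piece either to the known cycle identities for $1\otimes T$ or to the lemma---is exactly right; note that for $Dx$ one uses $(x^*D^*)^*$ and the lemma applied to $D^*$.

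One small caution on conventions: in the paper's setup $L^2(G)$ carries its right $C_0(X)$-structure via $s$, so the fiber of $\tilde\cE$ at $g\in G$ sits in $\cE_{s(g)}$; the exact formula for $W$ (whether $V_g$ or $V_g^{-1}$, and at which index) depends on this choice and on which side one places the diagonal action. This only affects indices, not the structure of the argument.
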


Another important ingredient is the ``Cuntz picture'' of $\KKK^G(A, B)$.
To simplify the notation, put $\tA = \cK(L^2(G)^\infty) \otimes_{C_0(X)} A$.
A \emph{$G$-equivariant quasi-homomorphism} from $\tA$ to $\tB$ is given by pair of $G$-equivariant $*$-homomorphisms $\phi^+, \phi^-$ from $\tA$ to $\cM(\tB)$ such that $\phi^+(a) - \phi^-(a) \in \tB$ for all $a \in \tA$.
This induces a Kasparov $G$-$\tA$-$B$-cycle
\begin{equation}\label{eq:kasparov-cycle-from-quasi-hom}
\left(\phi^+ \oplus \phi^-, (L^2(G)^\infty \otimes_{C_0(X)} B)^{\oplus 2}, T = 
\begin{bmatrix}
 0 & 1\\
 1 & 0
\end{bmatrix}\right).
\end{equation}

\begin{proof}[Proof of Proposition \ref{prop:univ-prop-KKG}]
Let us explain how to present $\KKK^G(A, B)$ in terms of equivariant quasi-homomorphisms using Proposition \ref{prop:T-equivar-up-to-compact}.
Let us start with a $G$-$A$-$B$-Kasparov cycle $(\pi, \cE, T)$.
Adding a degenerate direct summand, we may assume that $E$ is full as a Hilbert $B$-module.
Take a $G$-equivariant endomorphism $T'$ on $\tilde\cE$ as above.
$1 \otimes T$ and $T'$ define homotopic Kasparov cycles, so any class in $\KKK^G(A, B)$ has a $G$-equivariant representative by replacing $A$ with $\tA$.

Doing the same for $G$-$A$-$(B \otimes C([0, 1]))$-Kasparov cycles, we see that, if $(\pi_0, \cE_0, T_0)$ and $(\pi_1, \cE_1, T_1)$ are homotopic cycles, then $T'_0$ are $T'_1$ are homotopic through a $G$-equivariant path.
Consequently $\KKK^G(A, B)$ is the quotient set of the $G$-$\tA$-$B$-Kasparov cycles $(\pi, \cE, T)$, with $G$-equivariant $T$, up to the equivalence relation generated by $G$-equivalent homotopy, $G$-equivariant unitary equivalence, and ignoring the difference of direct sum with degenerate cycles.

Moreover, we can replace $T$ by a $G$-equivariant endomorphism satisfying $T = T^* = T^{-1}$ without breaking the equivalence relation, see \cite{MR859867}*{Chapter 17}.
By Lemma \ref{lem:Hilb-mod-stab-by-L2G-infty}, we may assume that $T$ is represented on $L^2(G)^\infty \otimes_{C_0(X)} B$.
Then we can write $T$ in the form of \eqref{eq:kasparov-cycle-from-quasi-hom}, and the left action of $\tA$ is given by a $G$-equivariant $*$-homomorphism $\pi\colon \tA \to \cM(\tB)$.
Finally, the commutation relation with $T$ implies that $\pi$ is of the form $\phi^+ \oplus \phi^-$ for an equivariant quasi-homomorphism $(\phi^+, \phi^-)$ from $\tA$ to $\tB$.
Consequently, $\KKK^G(A, B)$ is isomorphic to the set of equivalence classes of equivariant quasi-homomorphisms $(\phi^+, \phi^-)\colon \tA \to \tB$ up to equivariant homotopy, equivariant unitary equivalence, and ignoring the difference of direct sum with degenerate  ones.

Next let us relate quasi-homomorphisms to split extensions, cf.~\citelist{\cite{MR859867}\cite{MR1789948}}.
Let $(\phi^+, \phi^-)$ be an equivariant quasi-homomorphism from $\tA$ to $\tB$.
Put
\[
D = \{(a, \phi^+(a) + b) \mid a \in \tA, b \in \tB\} \subset \tA \oplus \cM(\tB),
\]
which is a $G$-C$^*$-algebra by Lemma \ref{lem:D-is-G-C-alg}.
Moreover, this fits into a split extension
\[
\begin{tikzcd}
\tB \arrow{r}{j} & D \arrow[shift left]{r}{q} & \tA \arrow[shift left]{l}{s}, 
\end{tikzcd}
\]
with $j(b) = (0, b)$, $q(a, \phi^+(a) + b)) = a$, and $s(a) = (a, \phi^-(a))$.

Suppose that $F\colon \GCalg \to \cC$ is a functor into an additive category satisfying stability, homotopy invariance, and split exactness.
We want to show that there is a uniquely determined functor $\tilde F \colon \KKK^G \to \cC$ factoring $F$ up to natural isomorphisms.
Given an equivariant quasi-homomorphism $(\phi^+, \phi^-)$ from $\tA$ to $\tB$, construct $D$ as above.
Then we have an identification $F(D) \simeq F(\tB) \oplus F(\tA)$, so the projection onto the first summand combined with stability gives a morphism $\phi_*\colon F(D) \to F(B)$.
Moreover, there is another equivariant $*$-homomorphism $f\colon \tA \to D$ defined by $f(a) = (a, \phi^+(a))$.
We then obtain $\tilde F(\phi^+, \phi^-)\colon F(A) \to F(B)$ by combining $\phi_* \circ F(f)$ with stability for $A$.
This construction is compatible with the equivalence relation on quasi-homomorphisms, and we obtain a well-defined functor $\tilde F\colon \KKK^G \to \cC$ extending $F$.

Uniqueness follows from functoriality and the following observation: if $B \to D \to A$  is a split extension, $D$ is a model for the direct sum $B \oplus A$ in $\KKK^G$.
More concretely, the ideal inclusion $j\colon B \to D$ defines a homomorphism $\tilde \jmath\colon D \to \cM(B)$, and $(\tilde \jmath, \tilde \jmath s q)$ in the above notation defines a quasi-homomorphism from $D$ to $B$.
This is a projection of $D$ to $B$ in $\KKK^G$, and together with the other maps in the extension these $\KKK^G$-morphisms give the structure morphisms of the direct~sum.
\end{proof}

\begin{lemm}
\label{lem:D-is-G-C-alg}
The algebra $D$ in the above proof is a $G$-C$^*$-algebra.
\end{lemm}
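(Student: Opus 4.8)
The plan is to present $D$ as a split extension of the two $G$-C$^*$-algebras $\tA$ and $\tB$ and to read off the $G$-action from the two pieces. Recall from the proof of Proposition~\ref{prop:univ-prop-KKG} that $D$ is the pullback of the composite $\tA \xrightarrow{\phi^+} \cM(\tB) \to \cM(\tB)/\tB$ against the quotient map $\cM(\tB) \to \cM(\tB)/\tB$, so it is a C$^*$-subalgebra of $\tA \oplus \cM(\tB)$; with $j$ and $s$ as in that proof, every element of $D$ is uniquely of the form $j(b) + s(a) = (a, \phi^-(a) + b)$ with $b \in \tB$, $a \in \tA$, and $j(\tB)$ is an ideal of $D$ (since $\tB$ is an ideal of $\cM(\tB)$). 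Under the resulting identification $D \cong \tB \oplus \tA$ of $C_0(X)$-modules, the multiplication of $D$ becomes the twisted product
\begin{equation*}
(b_1, a_1)\cdot(b_2, a_2) = \bigl(b_1 b_2 + b_1 \phi^-(a_2) + \phi^-(a_1) b_2,\ a_1 a_2\bigr),
\end{equation*}
that is, $D$ is $\tB$ with $\tA$ adjoined acting through $\phi^-$. The $C_0(X)$-algebra structure on $D$ is then the direct-sum one coming from those of $\tB$ and $\tA$; it is central, and nondegenerate by nondegeneracy of $\tB$ and $\tA$ together with $C_0(X)$-linearity of $\phi^-$.

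Next I would define the action. The algebras $\tA$ and $\tB$ are genuine $G$-C$^*$-algebras with actions $\alpha^{\tA}$ and $\alpha^{\tB}$, and $\alpha^{\tB}$ extends canonically to an action $\bar\alpha^{\tB}$ on $\cM(\tB)$. Set $\alpha^D = \alpha^{\tB} \oplus \alpha^{\tA}$ on the fibers $D_x \cong \tB_x \oplus \tA_x$; equivalently, since $s^*(-)$ and $r^*(-)$ are additive, $\alpha^D$ is the $C_0(G)$-linear bijection
\begin{equation*}
s^* D = s^*\tB \oplus s^*\tA \longrightarrow r^*\tB \oplus r^*\tA = r^* D
\end{equation*}
given by $\alpha^{\tB}\oplus\alpha^{\tA}$. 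It is $*$-preserving and satisfies $\alpha^D_{gh} = \alpha^D_g\alpha^D_h$ for free, and the maps $j$, $q$, $s$ become $G$-equivariant. The one substantive point is that $\alpha^D$ is multiplicative for the twisted products on $s^*D$ and $r^*D$; this is precisely where the $G$-equivariance of $\phi^\pm$ — as opposed to their being mere $*$-homomorphisms — is used, namely through $\bar\alpha^{\tB}_g \circ \phi^- = \phi^- \circ \alpha^{\tA}_g$, which gives $\alpha^{\tB}_g(b\,\phi^-(a)) = \alpha^{\tB}_g(b)\,\phi^-(\alpha^{\tA}_g a)$ and its mirror image. Granting this, $\alpha^D$ is an isomorphism of $C_0(G)$-algebras, hence a continuous action, and $D$ is a $G$-C$^*$-algebra.

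The main obstacle is one of bookkeeping rather than depth: one must resist treating $\tA \oplus \cM(\tB)$ as a $G$-C$^*$-algebra from which $D$ inherits its action, since the action on $\cM(\tB)$ is only strictly (not norm-) continuous; working with the presentation $D \cong \tB \oplus \tA$ keeps everything among honest $G$-C$^*$-algebras, and then the only computation worth writing out is the multiplicativity check above. As a cross-check, one could instead apply $s^*(-)$ and $r^*(-)$ to $0 \to \tB \to D \to \tA \to 0$, which are exact by the $C_0(X)$-nuclearity of $C_0(G)$ as in the treatment of invariant ideals, and invoke the five lemma; but the splitting already exhibits $s^*D$ as $s^*\tB \oplus s^*\tA$, so this detour gains nothing.
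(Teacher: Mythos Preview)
Your proposal is correct and follows essentially the same line as the paper: both arguments identify $D$ with $\tB \oplus \tA$ as a $C_0(X)$-linear operator space and then define the $G$-action componentwise from $\alpha^{\tB}$ and $\alpha^{\tA}$. The only cosmetic difference is that the paper uses the parametrization $(a,\phi^+(a)+b)\mapsto (a,b)$ while you use the $\phi^-$-splitting $j(b)+s(a)$; these differ by the $\tB$-valued map $a\mapsto \phi^+(a)-\phi^-(a)$ and are interchangeable. Your write-up is in fact more explicit than the paper's on the one nontrivial point---multiplicativity of $\alpha^D$, which hinges precisely on the $G$-equivariance of $\phi^\pm$---whereas the paper simply asserts that the action is obtained ``as combination of the action maps on $\tA$ and $\tB$.''
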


\begin{proof}
First let us check that $D$ is a $C_0(X)$-algebra.
If $f \in C_0(X)$, $a \in \tA$ and $b \in \tB$, we obviously put $f . (a, \phi^+(a) + b) = (f a, \phi^+(f a) + f b)$.
Since $f a$ and $f b$ can approximate $a$ and $b$, we see that this defines a nondegenerate homomorphism $C_0(X) \to \cM(D)$.

Next, the maps
\begin{align*}
D &\to \tA, \quad (a, \phi^+(a) + b) \mapsto a,&
D &\to \tB, \quad (a, \phi^+(a) + b) \mapsto b
\end{align*}
are $C_0(X)$-linear and completely bounded.
From this we see that $C_0(G) \tens{s}{C_0(X)} D$ is a direct sum of $C_0(G) \tens{s}{C_0(X)} \tA$ and $C_0(G) \tens{s}{C_0(X)} \tB$ as an operator space, and similar decomposition holds for $C_0(G) \tens{r}{C_0(X)} D$.
Then we obtain an action map on $D$ as combination of the action maps on $\tA$ and $\tB$.
\end{proof}

\subsection{Triangulated structure}
\label{sec:app-triagl-str-equiv-KK}

Let $f \colon A \to B$ be an equivariant $*$-homomorphism of $G$-C$^*$-algebras.
As usual, its mapping cone is given by
\[
\Cone(f) = \{(a, b_*) \in A \oplus C_0((0, 1], B) \mid f(a) = b_1 \},
\]
which inherits a structure of $G$-C$^*$-algebra from $A$ and $B$.

An exact triangle in $\KKK^G$ is a diagram of the form
\[
A \to B \to C \to \Sigma A
\]
such that there exists a homomorphism $f\colon A' \to B'$ of $G$-C$^*$-algebras and a commutative diagram
\[
\begin{tikzcd}
A \arrow[r] \arrow[d] & B \arrow[r] \arrow[d] & C \arrow[r] \arrow[d] & \Sigma A  \arrow[d]\\
\Sigma B' \arrow[r] & \Cone(f) \arrow[r] & A' \arrow[r] & B',
\end{tikzcd}
\]
in $\KKK^G$, where vertical arrows are equivalences and the rightmost downward arrow is equal to the leftmost downward arrow up to applying $\Sigma$ and Bott periodicity $\Sigma^2 B' \simeq B'$ in $\KKK^G$.

Thus, we are really defining a triangulated category structure on the opposite category of $\KKK^G$.
Generally the opposite category of a triangulated category is again triangulated with ``the same'' exact triangles with suspension and desuspension exchanged, but for $\KKK^G$ we have $\Sigma^2 \simeq \Id$ and we can ignore that issue.

The crucial step is to check the axiom (TR1), in particular that any $\KKK^G$-morphism is represented by a $G$-equivariant $*$-homomorphism up to $\KKK^G$-equivalence, see \cite{laff:kkban}*{Lemma A.3.2}.
Having established that, the rest is quite standard; one can follow \cite{nestmeyer:loc}*{Appendix A} to check that the triangles of the form
\[
\Sigma B \to \Cone(f) \to A \to B
\]
satisfy the axioms (TR2), (TR3), and (TR4) for the opposite category of $\KKK^G$.

Finally, suppose that an equivariant $*$-homomorphism $f\colon A \to B$ is surjective with a $C_0(X)$-linear completely positive section $B \to A$.
Then the $G$-C$^*$-algebra $I = \ker f$ is isomorphic to $\Cone(f)$ in $\KKK^G$, by the embedding homomorphism
\[
I \to \Cone(f), \quad a \mapsto (a, 0).
\]
It follows that there is an exact triangle of the form
\[
\begin{tikzcd}
I \arrow[r] & A  \arrow[r,"f"] &  B \arrow[r] & \Sigma I.
\end{tikzcd}
\]

\raggedright
\begin{bibdiv}
\begin{biblist}

\bib{renroch:amgrp}{book}{
      author={Anantharaman-Delaroche, C.},
      author={Renault, J.},
       title={Amenable groupoids},
      series={Monographies de L'Enseignement Math\'ematique},
   publisher={L'Enseignement Math\'ematique, Geneva},
        date={2000},
      volume={36},
        ISBN={2-940264-01-5},
        note={With a foreword by Georges Skandalis and Appendix B by E.
  Germain},
      review={\MR{1799683}},
}

\bib{putand:til}{article}{
      author={Anderson, Jared~E.},
      author={Putnam, Ian~F.},
       title={Topological invariants for substitution tilings and their
  associated {C$^*$}-algebras},
        date={1998},
        ISSN={0143-3857},
     journal={Ergodic Theory Dynam. Systems},
      volume={18},
      number={3},
       pages={509\ndash 537},
         url={https://doi.org/10.1017/S0143385798100457},
         doi={10.1017/S0143385798100457},
      review={\MR{1631708}},
}

\bib{bauval:nuc}{article}{
      author={Bauval, Anne},
       title={{$RKK(X)$}-nucl{\'e}arit{\'e} (d'apr{\`e}s {G}.\ {S}kandalis)},
        date={1998},
        ISSN={0920-3036},
     journal={$K$-Theory},
      volume={13},
      number={1},
       pages={23\ndash 40},
         url={http://dx.doi.org/10.1023/A:1007727426701},
         doi={10.1023/A:1007727426701},
      review={\MR{1610242 (99h:19007)}},
}

\bib{bcv:aat}{article}{
 author={Bekka, M. E. B.},
 author={Cherix, P.-A.},
 author={Valette, A.},
 Title = {{Proper affine isometric actions of amenable groups.}},
 BookTitle = {{Novikov conjectures, index theorems and rigidity. Vol. 2. Based on a conference of the Mathematisches Forschungsinstitut Oberwolfach in September 1993}},
 ISBN = {0-521-49795-7/pbk},
 Pages = {1--4},
 Year = {1995},
 Publisher = {Cambridge: Cambridge University Press},
 Language = {English},
}

\bib{MR859867}{book}{
      author={Blackadar, Bruce},
       title={{$K$}-theory for operator algebras},
      series={Mathematical Sciences Research Institute Publications},
   publisher={Springer-Verlag, New York},
        date={1986},
      volume={5},
        ISBN={0-387-96391-X},
         url={http://dx.doi.org/10.1007/978-1-4613-9572-0},
      review={\MR{859867}},
}

\bib{bla:defhopf}{article}{
      author={Blanchard, \'Etienne},
       title={D\'eformations de {C$^*$}-alg\`ebres de {H}opf},
        date={1996},
        ISSN={0037-9484},
     journal={Bull. Soc. Math. France},
      volume={124},
      number={1},
       pages={141\ndash 215},
         url={http://www.numdam.org/item?id=BSMF_1996__124_1_141_0},
      review={\MR{1395009}},
}

\bib{blakirch:glimm}{article}{
   author={Blanchard, Etienne},
   author={Kirchberg, Eberhard},
   title={Global Glimm halving for C$^*$-bundles},
   journal={J. Operator Theory},
   volume={52},
   date={2004},
   number={2},
   pages={385--420},
   issn={0379-4024},
   review={\MR{2120237}},
}

\bib{bon:goingdownbc}{article}{
      author={B\"{o}nicke, Christian},
       title={A going-down principle for ample groupoids and the
  {B}aum-{C}onnes conjecture},
        date={2020},
        ISSN={0001-8708},
     journal={Adv. Math.},
      volume={372},
       pages={107314, 73},
      eprint={\href{http://arxiv.org/abs/1806.00391}{\texttt{arXiv:1806.00391
  [math.OA]}}},
         url={https://doi.org/10.1016/j.aim.2020.107314},
         doi={10.1016/j.aim.2020.107314},
      review={\MR{4128575}},
}

\bib{MR1245825}{article}{
      author={Bratteli, Ola},
      author={Evans, David~E.},
      author={Kishimoto, Akitaka},
       title={Crossed products of totally disconnected spaces by {$Z_2\ast
  Z_2$}},
        date={1993},
        ISSN={0143-3857},
     journal={Ergodic Theory Dynam. Systems},
      volume={13},
      number={3},
       pages={445\ndash 484},
         url={https://doi.org/10.1017/S0143385700007483},
         doi={10.1017/S0143385700007483},
      review={\MR{1245825}},
}

\bib{MR1324339}{book}{
      author={Brown, Kenneth~S.},
       title={Cohomology of groups},
      series={Graduate Texts in Mathematics},
   publisher={Springer-Verlag},
     address={New York},
        date={1994},
      volume={87},
        ISBN={0-387-90688-6},
        note={Corrected reprint of the 1982 original},
      review={\MR{1324339 (96a:20072)}},
}

\bib{chri:tri}{article}{
      author={Christensen, J.~Daniel},
       title={Ideals in triangulated categories: phantoms, ghosts and skeleta},
        date={1998},
        ISSN={0001-8708},
     journal={Adv. Math.},
      volume={136},
      number={2},
       pages={284\ndash 339},
         url={https://doi.org/10.1006/aima.1998.1735},
         doi={10.1006/aima.1998.1735},
      review={\MR{1626856}},
}

\bib{cramo:hom}{article}{
      author={Crainic, Marius},
      author={Moerdijk, Ieke},
       title={A homology theory for {\'e}tale groupoids},
        date={2000},
        ISSN={0075-4102},
     journal={J. Reine Angew. Math.},
      volume={521},
       pages={25\ndash 46},
  eprint={\href{http://arxiv.org/abs/math/9905011}{\texttt{arXiv:math/9905011
  [math.KT]}}},
         url={http://dx.doi.org/10.1515/crll.2000.029},
         doi={10.1515/crll.2000.029},
      review={\MR{1752294 (2001f:58039)}},
}

\bib{deaconu:endo}{article}{
   author={Deaconu, Valentin},
   title={Groupoids associated with endomorphisms},
   journal={Trans. Amer. Math. Soc.},
   volume={347},
   date={1995},
   number={5},
   pages={1779--1786},
   issn={0002-9947},
   review={\MR{1233967}},
   doi={10.2307/2154972},
}

\bib{meyereme:dualities}{article}{
      author={Emerson, Heath},
      author={Meyer, Ralf},
       title={Dualities in equivariant {K}asparov theory},
        date={2010},
        ISSN={1076-9803},
     journal={New York J. Math.},
      volume={16},
       pages={245\ndash 313},
      eprint={\href{http://arxiv.org/abs/0711.0025}{\texttt{arXiv:0711.0025
  [math.KT]}}},
         url={http://nyjm.albany.edu:8000/j/2010/16_245.html},
      review={\MR{2740579}},
}

\bib{exren:semi}{article}{
      author={Exel, R.},
      author={Renault, J.},
       title={Semigroups of local homeomorphisms and interaction groups},
        date={2007},
        ISSN={0143-3857},
     journal={Ergodic Theory Dynam. Systems},
      volume={27},
      number={6},
       pages={1737\ndash 1771},
  eprint={\href{http://arxiv.org/abs/math/0608589}{\texttt{arXiv:math/0608589
  [math.OA]}}},
         url={https://doi-org.ezproxy.uio.no/10.1017/S0143385707000193},
         doi={10.1017/S0143385707000193},
      review={\MR{2371594}},
}

\bib{MR3581326}{article}{
      author={Exel, Ruy},
      author={Pardo, Enrique},
       title={Self-similar graphs, a unified treatment of {K}atsura and
  {N}ekrashevych {$\rm C^*$}-algebras},
        date={2017},
        ISSN={0001-8708},
     journal={Adv. Math.},
      volume={306},
       pages={1046\ndash 1129},
      eprint={\href{http://arxiv.org/abs/1409.1107}{\texttt{arXiv:1409.1107
  [math.OA]}}},
         url={https://doi.org/10.1016/j.aim.2016.10.030},
         doi={10.1016/j.aim.2016.10.030},
      review={\MR{3581326}},
}

\bib{simsfarsi:hk}{article}{
      author={Farsi, Carla},
      author={Kumjian, Alex},
      author={Pask, David},
      author={Sims, Aidan},
       title={Ample groupoids: equivalence, homology, and {M}atui's {HK}
  conjecture},
        date={2019},
        ISSN={1867-5778},
     journal={M\"{u}nster J. Math.},
      volume={12},
      number={2},
       pages={411\ndash 451},
      eprint={\href{http://arxiv.org/abs/1808.07807}{\texttt{arXiv:1808.07807
  [math.OA]}}},
         url={https://doi-org.ezproxy.uio.no/10.17879/53149724091},
         doi={10.17879/53149724091},
      review={\MR{4030921}},
}

\bib{pgs:orbit}{article}{
      author={Giordano, Thierry},
      author={Putnam, Ian~F.},
      author={Skau, Christian~F.},
       title={Topological orbit equivalence and {$C\sp *$}-crossed products},
        date={1995},
        ISSN={0075-4102},
     journal={J. Reine Angew. Math.},
      volume={469},
       pages={51\ndash 111},
      review={\MR{1363826 (97g:46085)}},
}

\bib{MR0345092}{book}{
      author={Godement, Roger},
       title={Topologie alg\'{e}brique et th\'{e}orie des faisceaux},
   publisher={Hermann, Paris},
        date={1973},
        note={Troisi\`eme \'{e}dition revue et corrig\'{e}e, Publications de
  l'Institut de Math\'{e}matique de l'Universit\'{e} de Strasbourg, XIII,
  Actualit\'{e}s Scientifiques et Industrielles, No. 1252},
      review={\MR{0345092}},
}

\bib{monjus:piece}{article}{
      author={Juschenko, Kate},
      author={Monod, Nicolas},
       title={Cantor systems, piecewise translations and simple amenable
  groups},
        date={2013},
        ISSN={0003-486X},
     journal={Ann. of Math. (2)},
      volume={178},
      number={2},
       pages={775\ndash 787},
      eprint={\href{http://arxiv.org/abs/1204.2132}{\texttt{arXiv:1204.2132
  [math.GR]}}},
         url={https://doi-org.ezproxy.uio.no/10.4007/annals.2013.178.2.7},
         doi={10.4007/annals.2013.178.2.7},
      review={\MR{3071509}},
}

\bib{MR2021006}{article}{
      author={Kaminker, Jerome},
      author={Putnam, Ian},
       title={A proof of the gap labeling conjecture},
        date={2003},
        ISSN={0026-2285},
     journal={Michigan Math. J.},
      volume={51},
      number={3},
       pages={537\ndash 546},
  eprint={\href{http://arxiv.org/abs/math/0205102}{\texttt{arXiv:math/0205102
  [math.KT]}}},
      review={\MR{2021006 (2005f:46121b)}},
}

\bib{kas:descent}{article}{
      author={Kasparov, G.~G.},
       title={Equivariant {${\it KK}$}-theory and the {N}ovikov conjecture},
        date={1988},
        ISSN={0020-9910},
     journal={Invent. Math.},
      volume={91},
      number={1},
       pages={147\ndash 201},
         url={http://dx.doi.org/10.1007/BF01404917},
         doi={10.1007/BF01404917},
      review={\MR{918241 (88j:58123)}},
}

\bib{MR3597146}{article}{
      author={Kasparov, Gennadi},
       title={Elliptic and transversally elliptic index theory from the
  viewpoint of {$KK$}-theory},
        date={2016},
        ISSN={1661-6952},
     journal={J. Noncommut. Geom.},
      volume={10},
      number={4},
       pages={1303\ndash 1378},
         url={https://doi-org.ezproxy.uio.no/10.4171/JNCG/261},
         doi={10.4171/JNCG/261},
      review={\MR{3597146}},
}

\bib{MR2400990}{article}{
      author={Katsura, Takeshi},
       title={A construction of actions on {K}irchberg algebras which induce
  given actions on their {$K$}-groups},
        date={2008},
        ISSN={0075-4102},
     journal={J. Reine Angew. Math.},
      volume={617},
       pages={27\ndash 65},
  eprint={\href{http://arxiv.org/abs/math/0608093}{\texttt{arXiv:math/0608093
  [math.OA]}}},
         url={https://doi.org/10.1515/CRELLE.2008.025},
         doi={10.1515/CRELLE.2008.025},
      review={\MR{2400990}},
}

\bib{MR1798993}{incollection}{
      author={Kellendonk, Johannes},
      author={Putnam, Ian~F.},
       title={Tilings, {C$^*$}-algebras, and {$K$}-theory},
        date={2000},
   booktitle={Directions in mathematical quasicrystals},
      series={CRM Monogr. Ser.},
      volume={13},
   publisher={Amer. Math. Soc., Providence, RI},
       pages={177\ndash 206},
      review={\MR{1798993}},
}

\bib{skan:crossinv}{article}{
      author={Khoshkam, Mahmood},
      author={Skandalis, Georges},
       title={Crossed products of {C$^*$}-algebras by groupoids and inverse
  semigroups},
        date={2004},
        ISSN={0379-4024},
     journal={J. Operator Theory},
      volume={51},
      number={2},
       pages={255\ndash 279},
      review={\MR{2074181}},
}

\bib{laff:kkban}{article}{
      author={Lafforgue, Vincent},
       title={{$K$}-th\'eorie bivariante pour les alg\`ebres de {B}anach,
  groupo\"\i des et conjecture de {B}aum-{C}onnes. {A}vec un appendice
  d'{H}erv\'e {O}yono-{O}yono},
        date={2007},
        ISSN={1474-7480},
     journal={J. Inst. Math. Jussieu},
      volume={6},
      number={3},
       pages={415\ndash 451},
         url={https://doi.org/10.1017/S1474748007000084},
         doi={10.1017/S1474748007000084},
      review={\MR{2329760}},
}

\bib{MR1325694}{book}{
      author={Lance, E.~C.},
       title={Hilbert {$C^*$}-modules},
      series={London Mathematical Society Lecture Note Series},
   publisher={Cambridge University Press},
     address={Cambridge},
        date={1995},
      volume={210},
        ISBN={0-521-47910-X},
         url={http://dx.doi.org/10.1017/CBO9780511526206},
        note={A toolkit for operator algebraists},
      review={\MR{1325694 (96k:46100)}},
}

\bib{gall:kk}{article}{
      author={Le~Gall, Pierre-Yves},
       title={Th\'eorie de {K}asparov \'equivariante et groupo\"\i des. {I}},
        date={1999},
        ISSN={0920-3036},
     journal={$K$-Theory},
      volume={16},
      number={4},
       pages={361\ndash 390},
         url={https://doi.org/10.1023/A:1007707525423},
         doi={10.1023/A:1007707525423},
      review={\MR{1686846}},
}

\bib{mats:ruellemarkov}{article}{
      author={Matsumoto, Kengo},
       title={Topological conjugacy of topological {M}arkov shifts and {R}uelle
  algebras},
        date={2019},
        ISSN={0379-4024},
     journal={J. Operator Theory},
      volume={82},
      number={2},
       pages={253\ndash 284},
      eprint={\href{http://arxiv.org/abs/1706.07155}{\texttt{arXiv:1706.07155
  [math.OA]}}},
      review={\MR{4015953}},
}

\bib{matui:hk}{article}{
      author={Matui, Hiroki},
       title={Homology and topological full groups of \'etale groupoids on
  totally disconnected spaces},
        date={2012},
        ISSN={0024-6115},
     journal={Proc. Lond. Math. Soc. (3)},
      volume={104},
      number={1},
       pages={27\ndash 56},
      eprint={\href{http://arxiv.org/abs/0909.1624}{\texttt{arXiv:0909.1624
  [math.OA]}}},
         url={https://doi.org/10.1112/plms/pdr029},
      review={\MR{2876963}},
}

\bib{matui:remcan}{article}{
      author={Matui, Hiroki},
       title={Some remarks on topological full groups of {C}antor minimal
  systems {II}},
        date={2013},
        ISSN={0143-3857},
     journal={Ergodic Theory Dynam. Systems},
      volume={33},
      number={5},
       pages={1542\ndash 1549},
      eprint={\href{http://arxiv.org/abs/1111.3134}{\texttt{arXiv:1111.3134
  [math.DS]}}},
         url={https://doi-org.ezproxy.uio.no/10.1017/S0143385712000399},
         doi={10.1017/S0143385712000399},
      review={\MR{3103094}},
}

\bib{meyer:tri}{article}{
      author={Meyer, Ralf},
       title={Homological algebra in bivariant {$K$}-theory and other
  triangulated categories. {II}},
        date={2008},
        ISSN={1875-158X},
     journal={Tbil. Math. J.},
      volume={1},
       pages={165\ndash 210},
      eprint={\href{http://arxiv.org/abs/0801.1344}{\texttt{arXiv:0801.1344
  [math.KT]}}},
      review={\MR{2563811}},
}

\bib{nestmeyer:loc}{article}{
      author={Meyer, Ralf},
      author={Nest, Ryszard},
       title={The {B}aum--{C}onnes conjecture via localisation of categories},
        date={2006},
        ISSN={0040-9383},
     journal={Topology},
      volume={45},
      number={2},
       pages={209\ndash 259},
  eprint={\href{http://arxiv.org/abs/math/0312292}{\texttt{arXiv:math/0312292
  [math.KT]}}},
         url={http://dx.doi.org/10.1016/j.top.2005.07.001},
         doi={10.1016/j.top.2005.07.001},
      review={\MR{2193334 (2006k:19013)}},
}

\bib{meyernest:tri}{incollection}{
      author={Meyer, Ralf},
      author={Nest, Ryszard},
       title={Homological algebra in bivariant {$K$}-theory and other
  triangulated categories. {I}},
        date={2010},
   booktitle={Triangulated categories},
      series={London Math. Soc. Lecture Note Ser.},
      volume={375},
   publisher={Cambridge Univ. Press, Cambridge},
       pages={236\ndash 289},
  eprint={\href{http://arxiv.org/abs/math/0702146}{\texttt{arXiv:math/0702146
  [math.KT]}}},
      review={\MR{2681710}},
}

\bib{MR1607724}{article}{
      author={Moerdijk, I.},
       title={Proof of a conjecture of {A}. {H}aefliger},
        date={1998},
        ISSN={0040-9383},
     journal={Topology},
      volume={37},
      number={4},
       pages={735\ndash 741},
         url={https://doi.org/10.1016/S0040-9383(97)00053-0},
         doi={10.1016/S0040-9383(97)00053-0},
      review={\MR{1607724}},
}

\bib{damu:renequiv}{book}{
      author={Muhly, Paul~S.},
      author={Williams, Dana~P.},
       title={Renault's equivalence theorem for groupoid crossed products},
      series={New York Journal of Mathematics. NYJM Monographs},
   publisher={State University of New York, University at Albany, Albany, NY},
        date={2008},
      volume={3},
      review={\MR{2547343}},
}

\bib{ortega:kep}{article}{
      author={Ortega, Eduard},
       title={The homology of the {K}atsura-{E}xel-{P}ardo groupoid},
        date={2020},
        ISSN={1661-6952},
     journal={J. Noncommut. Geom.},
      volume={14},
      number={3},
       pages={913\ndash 935},
      eprint={\href{http://arxiv.org/abs/1806.09297}{\texttt{arXiv:1806.09297
  [math.OA]}}},
         url={https://doi.org/10.4171/jncg/382},
         doi={10.4171/jncg/382},
      review={\MR{4170644}},
}

\bib{MR1817505}{article}{
      author={Oyono-Oyono, Herv\'{e}},
       title={Baum-{C}onnes conjecture and extensions},
        date={2001},
        ISSN={0075-4102},
     journal={J. Reine Angew. Math.},
      volume={532},
       pages={133\ndash 149},
         url={https://doi-org.ezproxy.uio.no/10.1515/crll.2001.020},
         doi={10.1515/crll.2001.020},
      review={\MR{1817505}},
}

\bib{MR1789948}{article}{
      author={Park, Efton},
      author={Trout, Jody},
       title={Representable {$E$}-theory for {$C_0(X)$}-algebras},
        date={2000},
        ISSN={0022-1236},
     journal={J. Funct. Anal.},
      volume={177},
      number={1},
       pages={178\ndash 202},
      eprint={\href{http://arxiv.org/abs/math/0006182}{\texttt{arXiv:math/0006182
  [math.OA]}}},
         url={http://dx.doi.org/10.1006/jfan.2000.3654},
         doi={10.1006/jfan.2000.3654},
      review={\MR{1789948 (2001m:46157)}},
}

\bib{phil:cantorzd}{article}{
      author={Phillips, N.~Christopher},
       title={Crossed products of the {C}antor set by free minimal actions of
  {$\mathbb{Z}^d$}},
        date={2005},
        ISSN={0010-3616},
     journal={Comm. Math. Phys.},
      volume={256},
      number={1},
       pages={1\ndash 42},
  eprint={\href{http://arxiv.org/abs/math/0208085}{\texttt{arXiv:math/0208085
  [math.OA]}}},
         url={http://dx.doi.org/10.1007/s00220-004-1171-y},
         doi={10.1007/s00220-004-1171-y},
      review={\MR{2134336 (2006g:46107)}},
}

\bib{MR2044224}{article}{
      author={Popescu, Radu},
       title={Equivariant {$E$}-theory for groupoids acting on
  {$C^*$}-algebras},
        date={2004},
        ISSN={0022-1236},
     journal={J. Funct. Anal.},
      volume={209},
      number={2},
       pages={247\ndash 292},
         url={https://doi.org/10.1016/j.jfa.2003.04.001},
         doi={10.1016/j.jfa.2003.04.001},
      review={\MR{2044224}},
}

\bib{proietti-phd-thesis}{thesis}{
      author={Proietti, Valerio},
       title={On {$K$}-theory, groups, and topological dynamics},
        type={Ph.D. Thesis},
        date={2018},
        organization={University of Copenhagen}
}

\bib{arXiv:2104.10938}{misc}{
      author={Proietti, Valerio},
      author={Yamashita, Makoto},
       title={Homology and {K}-theory of dynamical systems. {II}. {S}male
  spaces with totally disconnected transversal},
         how={preprint},
        date={2021},
      eprint={\href{http://arxiv.org/abs/2104.10938}{\texttt{arXiv:2104.10938
  [math.KT]}}},
}

\bib{put:algSmale}{article}{
      author={Putnam, Ian~F.},
       title={{C$^*$}-algebras from {S}male spaces},
        date={1996},
        ISSN={0008-414X},
     journal={Canad. J. Math.},
      volume={48},
      number={1},
       pages={175\ndash 195},
         url={https://doi.org/10.4153/CJM-1996-008-2},
         doi={10.4153/CJM-1996-008-2},
      review={\MR{1382481}},
}

\bib{ren:group}{book}{
      author={Renault, Jean},
       title={A groupoid approach to {$C^{\ast} $}-algebras},
      series={Lecture Notes in Mathematics},
   publisher={Springer, Berlin},
        date={1980},
      volume={793},
        ISBN={3-540-09977-8},
      review={\MR{584266}},
}

\bib{MR1971208}{article}{
      author={Sadun, Lorenzo},
      author={Williams, R.~F.},
       title={Tiling spaces are {C}antor set fiber bundles},
        date={2003},
        ISSN={0143-3857},
     journal={Ergodic Theory Dynam. Systems},
      volume={23},
      number={1},
       pages={307\ndash 316},
  eprint={\href{http://arxiv.org/abs/math/0105125}{\texttt{arXiv:math/0105125
  [math.DS]}}},
         url={http://dx.doi.org/10.1017/S0143385702000949},
         doi={10.1017/S0143385702000949},
      review={\MR{1971208 (2004a:37023)}},
}

\bib{MR2505326}{article}{
      author={Savinien, Jean},
      author={Bellissard, Jean},
       title={A spectral sequence for the {$K$}-theory of tiling spaces},
        date={2009},
        ISSN={0143-3857},
     journal={Ergodic Theory Dynam. Systems},
      volume={29},
      number={3},
       pages={997\ndash 1031},
      eprint={\href{http://arxiv.org/abs/0705.2483}{\texttt{arXiv:0705.2483
  [math.KT]}}},
         url={https://doi.org/10.1017/S0143385708000539},
         doi={10.1017/S0143385708000539},
      review={\MR{2505326}},
}

\bib{eduardo:hk}{article}{
      author={Scarparo, Eduardo},
       title={Homology of odometers},
        date={2020},
        ISSN={0143-3857},
     journal={Ergodic Theory Dynam. Systems},
      volume={40},
      number={9},
       pages={2541\ndash 2551},
      eprint={\href{http://arxiv.org/abs/1811.05795}{\texttt{arXiv:1811.05795
  [math.OA]}}},
         url={https://doi.org/10.1017/etds.2019.13},
         doi={10.1017/etds.2019.13},
      review={\MR{4130816}},
}

\bib{simswill:morita}{article}{
      author={Sims, Aidan},
      author={Williams, Dana~P.},
       title={Renault's equivalence theorem for reduced groupoid
  {$C^\ast$}-algebras},
        date={2012},
        ISSN={0379-4024},
     journal={J. Operator Theory},
      volume={68},
      number={1},
       pages={223\ndash 239},
      eprint={\href{http://arxiv.org/abs/1002.3093}{\texttt{arXiv:1002.3093
  [math.OA]}}},
         url={https://mathscinet.ams.org/mathscinet-getitem?mr=2966043},
      review={\MR{2966043}},
}

\bib{skand:knuc}{article}{
      author={Skandalis, Georges},
       title={Une notion de nucl\'{e}arit\'{e} en {$K$}-th\'{e}orie (d'apr\`es
  {J}. {C}untz)},
        date={1988},
        ISSN={0920-3036},
     journal={$K$-Theory},
      volume={1},
      number={6},
       pages={549\ndash 573},
         url={https://doi.org/10.1007/BF00533786},
         doi={10.1007/BF00533786},
      review={\MR{953916}},
}

\bib{MR3270778}{article}{
      author={Steinberg, Benjamin},
       title={Modules over \'{e}tale groupoid algebras as sheaves},
        date={2014},
        ISSN={1446-7887},
     journal={J. Aust. Math. Soc.},
      volume={97},
      number={3},
       pages={418\ndash 429},
      eprint={\href{http://arxiv.org/abs/1406.0088}{\texttt{arXiv:1406.0088
  [math.RA]}}},
         url={https://doi-org.ezproxy.uio.no/10.1017/S1446788714000342},
         doi={10.1017/S1446788714000342},
      review={\MR{3270778}},
}

\bib{thomsen:smale}{article}{
      author={Thomsen, Klaus},
       title={{C$^*$}-algebras of homoclinic and heteroclinic structure in
  expansive dynamics},
        date={2010},
        ISSN={0065-9266},
     journal={Mem. Amer. Math. Soc.},
      volume={206},
      number={970},
       pages={x+122},
         url={https://doi.org/10.1090/S0065-9266-10-00581-8},
         doi={10.1090/S0065-9266-10-00581-8},
      review={\MR{2667385}},
}

\bib{tu:moy}{article}{
      author={Tu, Jean-Louis},
       title={La conjecture de {B}aum--{C}onnes pour les feuilletages
  moyennables},
        date={1999},
        ISSN={0920-3036},
     journal={$K$-Theory},
      volume={17},
      number={3},
       pages={215\ndash 264},
         url={https://doi.org/10.1023/A:1007744304422},
         doi={10.1023/A:1007744304422},
      review={\MR{1703305}},
}

\bib{tu:novikov}{article}{
      author={Tu, Jean-Louis},
       title={La conjecture de {N}ovikov pour les feuilletages hyperboliques},
        date={1999},
        ISSN={0920-3036},
     journal={$K$-Theory},
      volume={16},
      number={2},
       pages={129\ndash 184},
         url={https://doi.org/10.1023/A:1007756501903},
      review={\MR{1671260}},
}

\bib{tu:propnonhaus}{article}{
      author={Tu, Jean-Louis},
       title={Non-{H}ausdorff groupoids, proper actions and {$K$}-theory},
        date={2004},
        ISSN={1431-0635},
     journal={Doc. Math.},
      volume={9},
       pages={565\ndash 597},
  eprint={\href{http://arxiv.org/abs/math/0403071}{\texttt{arXiv:math/0403071
  [math.OA]}}},
        note={Extended version available at \href{http://www.iecl.univ-lorraine.fr/~Jean-Louis.Tu/publi/properlong.pdf}{the author's website}},
      review={\MR{2117427}},
}

\bib{MR2231869}{article}{
      author={Tu, Jean-Louis},
       title={Groupoid cohomology and extensions},
        date={2006},
        ISSN={0002-9947},
     journal={Trans. Amer. Math. Soc.},
      volume={358},
      number={11},
       pages={4721\ndash 4747},
  eprint={\href{http://arxiv.org/abs/math/0404257}{\texttt{arXiv:math/0404257
  [math.OA]}}},
         url={https://doi.org/10.1090/S0002-9947-06-03982-1},
         doi={10.1090/S0002-9947-06-03982-1},
      review={\MR{2231869}},
}

\bib{wei:homalg}{book}{
      author={Weibel, Charles~A.},
       title={An introduction to homological algebra},
      series={Cambridge Studies in Advanced Mathematics},
   publisher={Cambridge University Press, Cambridge},
        date={1994},
      volume={38},
        ISBN={0-521-43500-5; 0-521-55987-1},
         url={https://doi.org/10.1017/CBO9781139644136},
         doi={10.1017/CBO9781139644136},
      review={\MR{1269324}},
}

\bib{yi:hkonesol}{article}{
      author={Yi, Inhyeop},
       title={Homology and {M}atui's {HK} conjecture for groupoids on
  one-dimensional solenoids},
        date={2020},
        ISSN={0004-9727},
     journal={Bull. Aust. Math. Soc.},
      volume={101},
      number={1},
       pages={105\ndash 117},
         url={https://doi.org/10.1017/s0004972719000522},
         doi={10.1017/s0004972719000522},
      review={\MR{4052913}},
}

\end{biblist}
\end{bibdiv}
\end{document}